\documentclass[12pt,a4paper]{amsart}
\usepackage{amssymb}
\usepackage{amsthm}
\usepackage{amsmath}
\usepackage{amsxtra}
\usepackage{latexsym}
\usepackage{mathrsfs}
\usepackage[all,cmtip]{xy}
\usepackage[all]{xy}
\usepackage{enumitem}
\usepackage{xcolor}
\usepackage{graphicx}
\usepackage{comment}

\usepackage[dvipdfmx]{hyperref}

\newtheorem{thm}{Theorem}[section]
\newtheorem{lem}[thm]{Lemma}
\newtheorem{conj}[thm]{Conjecture}
\newtheorem{claim}[thm]{Claim}
\newtheorem{prop}[thm]{Proposition}

\theoremstyle{definition}
\newtheorem{defn}[thm]{Definition}
\newtheorem{rmk}[thm]{Remark}
\newtheorem*{ack}{Acknowledgement}
\numberwithin{equation}{section}

\def\C{{\mathbb C}}
\def\Q{{\mathbb Q}}
\def\R{{\mathbb R}}
\def\Z{{\mathbb Z}}
\def\P{{\mathbb P}}

\def\Hom{\mathop{\mathrm{Hom}}\nolimits}
\def\QQ{\overline{\mathbb Q}}

\DeclareMathOperator{\pr}{pr}

\DeclareMathOperator{\id}{id}

\DeclareMathOperator{\End}{End}

\DeclareMathOperator{\Exc}{Exc}
\DeclareMathOperator{\Supp}{Supp}

\DeclareMathOperator{\CH}{CH}

\newenvironment{claimproof}[0]
  {%
   \paragraph{\it Proof.}%
  }
  {%
    \hfill$\blacksquare$%
  }

\newenvironment{notation}[0]{%
  \begin{list}%
    {}%
    {\setlength{\itemindent}{0pt}
     \setlength{\labelwidth}{4\parindent}
     \setlength{\labelsep}{\parindent}
     \setlength{\leftmargin}{5\parindent}
     \setlength{\itemsep}{0pt}
     }%
   }%
  {\end{list}}

\title[]
{Arithmetic and dynamical degrees of self-morphisms of semi-abelian varieties}
\author{Yohsuke Matsuzawa}
\author{Kaoru Sano}
\address{Graduate school of Mathematical Sciences, the University of Tokyo, Komaba, Tokyo,
153-8914, Japan}
\address{Department of Mathematics, Faculty of Science, Kyoto University, Kyoto 606-8502, Japan}
\email{myohsuke@ms.u-tokyo.ac.jp}
\email{ksano@math.kyoto-u.ac.jp}
\begin{document}

\begin{abstract}
We prove a conjecture by Kawaguchi-Silverman on arithmetic  and dynamical degrees,
for self-morphisms of semi-abelian varieties.
Moreover, we determine the set of the arithmetic degrees of orbits and
the (first) dynamical degrees of self-morphisms of semi-abelian varieties.
\end{abstract}

\maketitle

\tableofcontents 

\section{Introduction}

Let $X$ be a smooth projective variety and $f \colon X \dashrightarrow X$ a rational self-map,
both defined over $ \overline{\mathbb Q}$.
Silverman introduced the notion of arithmetic degree in \cite{sil},
which measures the arithmetic complexity of $f$-orbits.
Fix a Weil height function $h_{X}$ associated with an ample divisor on $X$.
Consider a point $x\in X( \overline{\mathbb Q})$ such that for all $n\geq0$, $f^{n}(x)$ is not contained in the indeterminacy locus of $f$.
The {\it arithmetic degree} of $f$ at $x$ is
\[
\alpha_{f}(x)=\lim_{n \to \infty}\max\{h_{X}(f^{n}(x)),1\}^{1/n}
\]
provided that the limit exists.

In \cite{ks1}, Kawaguchi and Silverman proved that when $f$ is a morphism,
the arithmetic degree $ \alpha_{f}(x)$ always exists and is equal to either $1$ or the absolute value of 
one of the eigenvalues of $f^{*} \colon N^{1}(X) {\otimes}_{\Z}\R \longrightarrow N^{1}(X) {\otimes}_{\Z}\R$,
where $N^{1}(X)$ is the group of divisors modulo numerical equivalence.

When $f$ is dominant, it is conjectured in  \cite{sil}, \cite[Conjecture 6]{ks3} that the arithmetic degree of any Zariski dense orbits
are equal to the first dynamical degree $\delta_{f}$ of $f$ (cf.  Conjecture \ref{ksc}). 
This is the Kawaguchi-Silverman conjecture, and we abbreviate it as KSC.
Here, the first dynamical degree is a birational invariant of $f$ which measures the geometric complexity of
the dynamical system.
For a surjective morphism $f$, $\delta_{f}$ is equal to the spectral radius of the linear map
$f^{*} \colon N^{1}(X) {\otimes}_{\Z}\R \longrightarrow N^{1}(X) {\otimes}_{\Z}\R$.

Let $A(f)$ be the set of the arithmetic degrees of $f$, i.e. 
\[A(f)=\{ \alpha_{f}(x) \mid x\in X \text{ with $f^{n}(x)\notin I_{f}$ for every $n\geq0$}\}\]
where $I_{f}$ is the indeterminacy locus of $f$.
Determining the set $A(f)$ for a given $f$ is an interesting problem.
In \cite[Theorem 1.6]{mss}, Shibata and we proved that for any surjective morphism $f$, there exists a point $x \in X$ such that
$ \alpha_{f}(x)=\delta_{f}$.
When $X$ is a toric variety and $f$ is a self-rational map on $X$ that is induced by
a group homomorphism of the algebraic torus, the set $A(f)$ is completely determined \cite{sil,lin}.

When $X$ is quasi-projective, the arithmetic degrees and dynamical degrees can be defined by taking a smooth compactification of $X$.
In this paper, we prove KSC for self-morphisms of semi-abelian varieties and determine the set $A(f)$.

\begin{thm}\label{main semiab}
Let $X$ be a semi-abelian variety and $f \colon X \longrightarrow X$ a self-morphism (not necessarily surjective), both defined over $ \overline{\mathbb Q}$.
\begin{enumerate}
\item[\rm (1)] 
For every $x\in X(\QQ)$, the arithmetic degree $ \alpha_{f}(x)$ exists.
Moreover, write $f=T_{a}\circ g$ where $T_{a}$ is the translation by a point $a\in X(\QQ)$ and $g$ is a group homomorphism.
Then $A(f)=A(g)$.
\item[\rm (2)] Suppose $f$ is surjective. Then for any point $x \in X(\QQ)$ with Zariski dense $f$-orbit, we have $ \alpha_{f}(x)=\delta_{f}$. 
\item[\rm (3)] Suppose $f$ is a group homomorphism. Let $F(t)$ be the monic minimal polynomial of $f$ as an element of $\End(X) {\otimes}_{\Z}\Q$ and
\[
F(t)=t^{e_{0}}F_{1}(t)^{e_{1}}\cdots F_{r}(t)^{e_{r}}
\]
the irreducible decomposition in $\Q[t]$ where $e_{0}\geq0$ and  $e_{i}>0$ for $i=1,\dots, r$. 
Let $\rho(F_{i})$ be the maximum among the absolute values of the roots of $F_{i}$.
Then we have
\[
A(f)\subset\{1, \rho(F_{1}), \rho(F_{1})^{2},\dots, \rho(F_{r}), \rho(F_{r})^{2} \}.
\]
More precisely, set
\[
X_{i}=f^{e_{0}}F_{1}(f)^{e_{1}}\cdots F_{i-1}(f)^{e_{i-1}} F_{i+1}(f)^{e_{i+1}}\cdots F_{r}(f)^{e_{r}}(X).
\]
Define
\[
A_{i}= \begin{cases}
	\{\rho(F_{i})\} \qquad \text{if $X_{i}$ is an algebraic torus,} \\
	\{\rho(F_{i})^{2}\} \qquad \text{if $X_{i}$ is an abelian variety,}\\
	\{\rho(F_{i}), \rho(F_{i})^{2}\} \qquad \text{otherwise.}
\end{cases}
 \]
Then we have
\[
A(f)=\{1\}\cup A_{1}\cup \cdots \cup A_{r}.
\]

\end{enumerate}
\end{thm}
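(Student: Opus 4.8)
The plan is to reduce, in stages, to the case where $f$ is a group homomorphism whose minimal polynomial is a power of a single irreducible polynomial, to analyze that ``isotypic'' case directly, and then to reassemble. \emph{Step 1: reduction to a homomorphism.} Write $f=T_{a}\circ g$ with $g$ a homomorphism. Using the factor $(t-1)$ of the minimal polynomial of $g$, split $X$ up to isogeny as $X'\times X''$ so that $1-g$ is invertible in $\End(X')\otimes\Q$ while $g$ is unipotent on $X''$, and split $f$ accordingly. On $X'$ the map $1-g$ is an isogeny, hence surjective on $\QQ$-points, so $f|_{X'}=T_{a'}\circ g$ has a fixed point $c$, giving $f|_{X'}=T_{c}\circ g\circ T_{-c}$ and $(f|_{X'})^{n}=T_{c}\circ g^{n}\circ T_{-c}$. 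Translation by a fixed point changes Weil heights only by a lower-order term (the relevant divisor class becomes numerically trivial, so $h$ is altered by at most $\varepsilon h+C_{\varepsilon}$ for each $\varepsilon>0$), which does not affect the $n$-th root limit; hence $\alpha_{f}(x)=\alpha_{g}(x-c)$ on $X'$ (so the arithmetic degree exists there) and $A(f|_{X'})=A(g|_{X'})$. On $X''$, orbits of both $g$ and $T_{a''}\circ g$ have at most polynomially growing height, so both arithmetic degree sets equal $\{1\}$. Consequently part~(2) follows once part~(3) is known; and since dynamical degrees are unaffected by this conjugation and by translations (a translation extends to an automorphism of an equivariant compactification lying in the identity component, hence acting trivially on $N^{1}$), part~(1) reduces to the statement that a surjective homomorphism $g$ with a Zariski dense orbit at $y$ satisfies $\alpha_{g}(y)=\delta_{g}$.

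\emph{Step 2: reduction to the isotypic case.} Let $F(t)=t^{e_{0}}F_{1}(t)^{e_{1}}\cdots F_{r}(t)^{e_{r}}$ be the minimal polynomial of $g$. Pairwise coprimality of the factors, via Bézout in $\Q[t]$, yields a $g$-equivariant isogeny $X_{0}\times X_{1}\times\cdots\times X_{r}\to X$, with $X_{i}$ as in the statement for $i\ge1$ and $X_{0}$ the complementary image on which $g$ is nilpotent; on each $X_{i}$ the minimal polynomial of $g$ is a power of $F_{i}$. Isogenies preserve dynamical degrees, and --- pullback of an ample divisor under a finite morphism being ample, surjectivity holding on $\QQ$-points --- they preserve arithmetic degrees and the sets $A(\cdot)$. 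On a product, Weil heights are commensurable with the sum of those of the factors, so the arithmetic degree of a point is the maximum of those of its coordinates, $A$ of the product is the union of the $A$'s, and a Zariski dense orbit projects to Zariski dense orbits (conversely, the isogeny being finite and the product irreducible, a dense orbit downstairs lifts to one upstairs). Since the $X_{0}$-part contributes only $\{1\}$ (its orbits are eventually constant), parts~(1) and~(3) reduce to the analogous assertions for each isotypic factor $X_{i}$, $i\ge1$.

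\emph{Step 3: the isotypic case.} Fix $i\ge1$; write $Y=X_{i}$, $\phi=g|_{Y}$, $F=F_{i}$, $\rho=\rho(F)$, with structure sequence $1\to S\to Y\to B\to 0$ (quotient map $\pi$) and induced maps $\phi_{S},\phi_{B}$. Since $F$ is irreducible and the minimal polynomial of $\phi$ on $Y$ --- hence on any nonzero $\phi$-stable connected subgroup, in particular on $S$, and that of $\phi_{B}$ on $B$ --- is a power of $F$, the characteristic polynomials of $\phi_{S}$ on the cocharacter lattice of $S$ and of $\phi_{B}$ on $H_{1}(B)_{\Q}$ are powers of $F$; in particular $\rho$ occurs as an eigenvalue in each whenever $S$, resp.\ $B$, is nonzero. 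If $Y$ is a torus, this is the toric case \cite{sil,lin}: $A(\phi)=\{1,\rho\}$ (with $\rho$ realized by a Zariski dense orbit) and $\delta_{\phi}=\rho$. If $Y$ is an abelian variety, Néron--Tate theory shows that the arithmetic degree of a point equals $|\mu|^{2}$ for the dominant eigenvalue $\mu$ of $\phi$ seen by that point; since the eigenvalues of $\phi$ are roots of $F$, this forces $A(\phi)\subseteq\{1,\rho^{2}\}$, with equality by choosing a point with non-torsion top eigencomponent (such points exist since $B(\QQ)$ is far from finitely generated), $\delta_{\phi}=\rho^{2}$, and the conjecture in this case. In the genuinely semi-abelian case: $\delta_{\phi}=\rho^{2}$ (it is $\ge\delta_{\phi_{B}}=\rho^{2}$ via the $\phi^{*}$-equivariant inclusion $\pi^{*}N^{1}(B)\hookrightarrow N^{1}(\overline{Y})$, and $\le\rho^{2}$ by the $N^{1}$-analysis below), so $\alpha_{\phi}(x)=\rho^{2}$ for any $x$ over a non-torsion point of $B$ with non-torsion top eigencomponent, while $\alpha_{\phi}(x)=\rho$ for any $x\in S(\QQ)$ with Zariski dense $\phi_{S}$-orbit, by the toric case applied inside the torus fiber. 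For the upper bound $A(\phi)\subseteq\{1,\rho,\rho^{2}\}$ I would invoke the Kawaguchi--Silverman theorem together with a description of the $\phi^{*}$-module $N^{1}(\overline{Y})$ for a suitable equivariant compactification $\overline{Y}\to B$ with toric fibers: it has a $\phi^{*}$-stable filtration with graded pieces $\pi^{*}N^{1}(B)$ and relative toric classes, on which the eigenvalue absolute values avoid the open intervals $(1,\rho)$ and $(\rho,\rho^{2})$. Thus $A(\phi)=\{1\}\cup A_{i}$; and when $\phi$ is surjective, a Zariski dense orbit of $x$ projects to a Zariski dense $\phi_{B}$-orbit, so by the abelian case (or the toric case when $Y$ is a torus) $\alpha_{\phi}(x)=\delta_{\phi}$.

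\emph{Assembly and the main difficulty.} Part~(3) combines Steps~1--3; part~(2) is Step~1 with part~(3); part~(1) is Step~1, the reduction to factors in Step~2, and the final assertion of Step~3. The main obstacle is the upper bound in the genuinely semi-abelian isotypic case: pinning down the $\phi^{*}$-module structure of $N^{1}(\overline{Y})$ for an equivariant compactification of a possibly \emph{non-split} extension --- in particular, controlling the ``relative'' classes involving the extension datum --- so as to bound the eigenvalue absolute values away from $(1,\rho)$ and $(\rho,\rho^{2})$. A second essential point is the Néron--Tate input in the abelian case, that the arithmetic degree is always an even power of an eigenvalue absolute value: this is what keeps $\rho$ out of $A_{i}$ for an abelian $X_{i}$ even when $F_{i}$ has roots on the unit circle (the Salem phenomenon), whereas in the semi-abelian case $\rho$ reappears, realized inside a torus fiber.
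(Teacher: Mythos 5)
Your Steps 1--2 (splitting off the unipotent/nilpotent part, conjugating the translation away via a fixed point, decomposing $X$ up to isogeny along the irreducible factors of the minimal polynomial, and reducing to the isotypic case) follow the same lines as the paper, though note that your justification for transferring arithmetic degrees across the isogeny (``pullback of an ample divisor under a finite morphism is ample'') does not apply as stated: heights live on smooth projective compactifications, and the isogeny of semi-abelian varieties need not extend to a finite morphism of compactifications. The paper devotes Lemma \ref{comparing arithmetic deg} (Stein factorization, resolutions, exceptional divisors supported away from the orbits) precisely to this point. That is repairable. The genuine gap is in Step 3, in the genuinely semi-abelian isotypic case, and you flag it yourself: you have no proof of the upper bound $A(\phi)\subseteq\{1,\rho,\rho^{2}\}$ (nor, strictly, of $\delta_{\phi}\le\rho^{2}$), and the mechanism you propose cannot deliver it. First, $\phi$ does not extend to a \emph{morphism} of an equivariant compactification $\overline{Y}$ in general (already monomial maps on toric compactifications are only rational), so the Kawaguchi--Silverman eigenvalue theorem for morphisms of projective varieties is not available; for rational maps one must work with $(\phi^{n})^{*}$, not powers of $\phi^{*}$. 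Second, the claimed eigenvalue gap is false in general: on the graded piece $\pi^{*}N^{1}(B)$ the eigenvalues of $\phi_{B}^{*}$ have absolute values $|\lambda_{i}\lambda_{j}|$ with $\lambda_{i}$ roots of $F$, and an irreducible non-cyclotomic $F$ can have roots with $1<|\lambda|<\rho$, so these products can land anywhere in $(1,\rho^{2})$, in particular inside your forbidden intervals. Indeed, even in the purely abelian case the ``eigenvalue membership'' theorem is too coarse to give $A=\{1,\rho^{2}\}$; the canonical-height refinement (the paper's Lemma \ref{power of irr case}, via \cite{ks1}) is what does the work, and no analogue of it is supplied for $\overline{Y}$.

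The paper closes exactly this gap by a different, height-theoretic route that avoids any analysis of $N^{1}(\overline{Y})$ for the arithmetic degrees (Lemma \ref{core case}): project to the abelian quotient $p\colon Y\to B$; if $p(x)$ is non-torsion, then $\alpha_{\phi_{B}}(p(x))=\delta_{\phi_{B}}=\rho^{2}=\delta_{\phi}$ by the abelian isotypic result, and the squeeze $\alpha_{\phi_{B}}(p(x))\le\underline{\alpha}_{\phi}(x)\le\overline{\alpha}_{\phi}(x)\le\delta_{\phi}$ (Lemma \ref{ascent}) forces $\alpha_{\phi}(x)=\rho^{2}$; if $p(x)$ is torsion, choose $n$ with $np(x)=0$, so $nx$ lies in the torus $S$, apply the known toric result \cite{sil} to get $\alpha_{\phi}(nx)=\alpha_{\phi_{S}}(nx)\in\{1,\rho\}$, and then use Poonen's canonical height decomposition on semi-abelian varieties \cite{po} to show $\alpha_{\phi}(x)=\alpha_{\phi}(nx)$ (Lemma \ref{multiple by n}). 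This also handles the points lying over nonzero torsion points of $B$ but not in $S$, which your sketch does not address, and it is what pins down whether $\rho$ actually occurs in $A_{i}$. The dynamical degree identity $\delta_{\phi}=\max\{\delta_{\phi_{B}},\delta_{\phi_{S}}\}$ is proved separately (Proposition \ref{dyn deg of isog of semiab}) by choosing a rational section of $Y\to B$, writing $(\widetilde{\phi}^{n})^{*}$ in block upper-triangular form on $CH^{1}(B\times\overline{S})$, and invoking $\delta=\lim_{n}\rho((\widetilde{\phi}^{n})^{*})^{1/n}$ from \cite{ks3}; this replaces, and is more careful than, your ``$N^{1}$-analysis below''. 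Without some substitute for these two ingredients your argument does not establish part (3), and hence neither (1) nor (2) in the genuinely semi-abelian case.
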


 \begin{rmk}\label{rmk:split}
Actually,  in the situation of Theorem \ref{main semiab} (3), $f$ is conjugate by an isogeny to a group homomorphism of the form
\[
f_{0}\times \cdots \times f_{r} \colon X_{0}\times \cdots \times X_{r} \longrightarrow X_{0}\times \cdots \times X_{r}
\]
where $A(f_{0})=\{1\}$ and $A(f_{i})=\{1\}\cup A_{i}$ for $i=1,\dots,r$. 
 \end{rmk}

We can characterize the set of points whose arithmetic degrees are equal to $1$ as follows
(cf. \cite{sano} for related results).

\begin{thm}\label{main2 semiab}
Let $X$ be a semi-abelian variety and $f \colon X \longrightarrow X$ a surjective morphism
both defined over $ \overline{\mathbb Q}$.
Write $f=T_{a}\circ g$  where $T_{a}$ is the translation by $a\in X(\QQ)$ and $g$ is a surjective group endomorphism of $X$.
Suppose that the minimal polynomial of $g$ has no irreducible factor that is
a cyclotomic polynomial.
Then there exists a point $b \in X(\QQ)$ such that, for any $x \in X(\QQ)$, the following are equivalent:
\begin{enumerate}
\item[\rm (1)] $\alpha_{f}(x)=1$;
\item[\rm (2)] $\#\ O_{f}(x)<\infty$;
\item[\rm (3)] $x \in b+X( \overline{\mathbb Q})_{\rm tors}$.
\end{enumerate}
Here $X( \overline{\mathbb Q})_{\rm tors}$ is the set of torsion points.
\end{thm}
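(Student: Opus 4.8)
The plan is to first reduce to the case of a group homomorphism by exhibiting an explicit $f$-fixed point, and then to prove the three equivalences for a homomorphism by a canonical-height argument, the no-cyclotomic-factor hypothesis entering only through a Kronecker-type positivity statement.

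I would begin by observing that the hypothesis on $g$ is equivalent to: $g^{n}-\id$ is an isogeny for every $n\geq1$. Indeed, the irreducible factors of the minimal polynomial of $g$ coincide with those of its characteristic polynomial $\chi_{g}$ on (say) the $\ell$-adic Tate module of $X$, so the hypothesis says precisely that no eigenvalue of $g$ is a root of unity, i.e. that $g^{n}-\id$ has finite kernel for all $n$; and a homomorphism of semi-abelian varieties with finite kernel is a (surjective) isogeny. In particular $t-1$ is not a factor, so $g-\id$ is an isogeny and we may pick $b\in X(\QQ)$ with $(g-\id)(b)=-a$. Then $f(b)=g(b)+a=b$, and a direct computation gives $f=T_{b}\circ g\circ T_{-b}$. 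Since $T_{-b}$ is an automorphism of $X$ carrying the $f$-dynamics to the $g$-dynamics, it sends $f$-orbits to $g$-orbits bijectively, preserves finiteness of orbits, preserves arithmetic degrees (translation-invariance of $\alpha$, as in the proof of Theorem \ref{main semiab}(2)), and sends $b+X(\QQ)_{\rm tors}$ onto $X(\QQ)_{\rm tors}$. So it suffices to prove that, for the isogeny $g$ with no eigenvalue a root of unity and for every $y\in X(\QQ)$, the conditions $\alpha_{g}(y)=1$, $\#O_{g}(y)<\infty$, and $y\in X(\QQ)_{\rm tors}$ are equivalent.

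Here ``$y$ torsion $\Rightarrow\#O_{g}(y)<\infty$'' is clear since $Ny=0$ forces $O_{g}(y)\subseteq X[N]$, and ``$\#O_{g}(y)<\infty\Rightarrow\alpha_{g}(y)=1$'' is formal. Conversely, if $\#O_{g}(y)<\infty$ then $y$ is preperiodic, so $g^{m}(y)\in\Ker(g^{k}-\id)$ for some $m,k$; this kernel is finite, so $g^{m}(y)$, hence $y$ (as $\Ker g^{m}$ is finite), is torsion. Thus the only implication requiring work is $\alpha_{g}(y)=1\Rightarrow y\in X(\QQ)_{\rm tors}$, which I would prove in the contrapositive: \emph{if $y$ is not torsion, then $\alpha_{g}(y)>1$.} Write $0\to T\to X\xrightarrow{\pi}A\to0$ with $T\cong\mathbb{G}_{m}^{s}$ and $A$ abelian; since $T$ is the maximal subtorus, $g(T)\subseteq T$, so $g$ induces isogenies $g_{T}$ on $T$ and $g_{A}$ on $A$ with $\chi_{g}=\chi_{g_{T}}\chi_{g_{A}}$, whence neither $g_{T}$ nor $g_{A}$ has an eigenvalue that is a root of unity. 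If $\pi(y)$ is not torsion I reduce to the abelian case via $\alpha_{g}(y)\geq\alpha_{g_{A}}(\pi(y))$ (since $h_{A}\circ\pi\ll h_{X}$, as $\pi$ extends to a morphism to the proper variety $A$); if $\pi(y)$ is torsion, then replacing $y$ by $Ny$ — which changes neither $\alpha_{g}$ (invariance under the central isogeny $[N]$, using that finite pullbacks of ample classes are ample) nor the torsion property — I may assume $y\in T(\QQ)$ is non-torsion and reduce to the torus case via $\alpha_{g}(y)=\alpha_{g_{T}}(y)$ (the restriction to the closure of $T$ of an ample class on a smooth compactification $\overline{X}$ is ample). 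In the abelian case use the Néron--Tate height $\hat h_{L}$ of an ample symmetric $L$, a positive semidefinite quadratic form vanishing exactly on torsion, with $\hat h_{L}(g_{A}^{n}y)=\langle g_{A}^{n}y,g_{A}^{n}y\rangle_{L}$; in the torus case use the Weil height $\hat h$ on $\mathbb{G}_{m}^{s}$, which by Kronecker's theorem is a norm on the relevant finitely generated group modulo torsion and satisfies $\hat h(g_{T}^{n}y)=\|g_{T}^{n}y\|$. In either case let $\Lambda$ be the $g$-stable, finitely generated subgroup generated by the $g$-orbit of $y$, taken modulo torsion; then $\Lambda\otimes\mathbb{Q}$ is a cyclic $\mathbb{Q}[g]$-module generated by the class of $y$, hence isomorphic to $\mathbb{Q}[t]/(p)$ with $p\mid\chi_{g}$ monic and $p\neq1$ (as $y$ is non-torsion). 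Picking an irreducible factor $F_{i}$ of $p$ and a root $\zeta$ with $|\zeta|=\rho(F_{i})$: since $F_{i}$ is a monic integer polynomial with nonzero constant term that is not cyclotomic, Kronecker's theorem forces $\rho(F_{i})>1$. After extending scalars to $\mathbb{R}$, the class of $y$ has nonzero component in the $\zeta$-isotypic summand of $\Lambda\otimes\mathbb{R}$, on which $g^{n}$ acts by $\zeta^{n}$ (up to a rotation when $\zeta\notin\mathbb{R}$); as the height form is positive definite on $\Lambda\otimes\mathbb{R}$, this gives $h_{X}(g^{n}y)\geq c\,\rho(F_{i})^{n}$ in the torus case, respectively $\geq c\,\rho(F_{i})^{2n}$ in the abelian case, for some $c>0$ and all large $n$, so $\alpha_{g}(y)\geq\rho(F_{i})>1$.

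I expect the real effort to lie not in this argument but in the height bookkeeping on the quasi-projective variety $X$: arranging compatible smooth compactifications so that $h_{A}\circ\pi\ll h_{X}$, $h_{X}|_{T}\asymp h_{T}$, and the invariance of $\alpha_{g}$ under $[N]$ and under translation all hold, and checking that the Néron--Tate and torus heights transplant correctly — all standard, but relying on the functoriality statements (finite pullbacks of ample divisors, restrictions of ample divisors to subvarieties, Néron--Tate functoriality) already used elsewhere in the paper. The one genuinely structural input is the observation in the crux that a non-torsion point cannot be concentrated in a contracting eigenspace: its class generates a cyclic $\mathbb{Q}[g]$-module, which over $\mathbb{R}$ must have a nonzero component at the archimedean place where the eigenvalue has absolute value $\rho(F_{i})$, and the no-cyclotomic-factor hypothesis makes that value strictly greater than $1$.
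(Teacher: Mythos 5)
Your overall skeleton matches the paper's: you conjugate away the translation by a fixed point $b$ (the paper's Lemma \ref{minimal polynomial and conjugation}), you get (3)$\Rightarrow$(2)$\Rightarrow$(1) formally, and for (1)$\Rightarrow$(3) you run the orbit through $0\to T\to X\xrightarrow{\pi}A\to 0$, using $\alpha_{g}(y)\geq\alpha_{g_{A}}(\pi(y))$ and, when $\pi(y)$ is torsion, multiplication by $N$ to land in $T$ --- exactly the paper's use of Lemma \ref{ascent} and Lemma \ref{multiple by n}. Where you genuinely diverge is at the crux: the paper quotes its Theorem \ref{main2} for the abelian quotient (proved via Kawaguchi--Silverman's nef canonical heights, the splitting lemma of \S3 and induction on dimension) and cites \cite[Proposition 21(d)]{sil} for the torus part, whereas you give a unified argument: the orbit generates a finitely generated $g$-stable group $\Lambda$, $\Lambda\otimes\Q$ is a cyclic $\Q[t]$-module, Kronecker gives an eigenvalue of modulus $>1$ on it, and a non-torsion point has nonzero expanding component, so the height grows exponentially. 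On the abelian side this is the classical N\'eron--Tate argument and is fine.

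The gap is in the torus half, in the sentence ``by Kronecker's theorem [the height] is a norm \ldots as the height form is positive definite on $\Lambda\otimes\R$.'' Kronecker only gives positivity of the height at non-torsion points of $\Lambda$ itself; your final estimate needs positive definiteness of the extended seminorm on the real vector space $\Lambda\otimes\R$, and positivity at lattice points of a homogeneous convex function does not imply that (e.g. $|a+b\sqrt2|$ on $\R^{2}$). This is precisely the failure mode your contrapositive must exclude: if all coordinates of $y$ are units, the only nonzero local log-vectors $(\log|y_{i}|_{w})_{i}$ are archimedean, hence genuinely real (not rational) vectors, and a priori they could lie entirely in the non-expanding $g$-invariant subspace, giving at most polynomial growth of $h(g^{n}y)$ and $\alpha_{g}(y)=1$ for a non-torsion $y$. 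The statement you need is true, but it requires a Northcott/discreteness input beyond Kronecker: $\Lambda$ lies in $(O_{K,S}^{\times})^{s}$ for fixed $K$ and $S$, the $S$-unit logarithmic embedding has discrete image, so $\Q$-independence of log-vectors forces $\R$-independence and the extended seminorm has trivial kernel; equivalently, run the standard proof of positive definiteness of N\'eron--Tate on $A(K)\otimes\R$ (lattice positivity plus finiteness of points of bounded height plus Dirichlet approximation), or simply cite \cite[Proposition 21]{sil} as the paper does. A smaller point in the same spirit: $\alpha_{g}(Ny)=\alpha_{g}(y)$ is not ``finite pullbacks of ample classes are ample'', since $[N]$ does not extend to a finite morphism of a compactification of $X$; the paper's Lemma \ref{multiple by n} proves it via Poonen's canonical heights \cite{po}, whose linear-plus-quadratic decomposition is what makes the comparison work, so cite or reprove that rather than treating it as routine bookkeeping.
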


\begin{rmk}
It is easy to see that when $f$ is a surjective group homomorphism, we can take $b=0$.
\end{rmk}

\begin{rmk}
If the minimal polynomial of $g$ has irreducible factor that is a cyclotomic polynomial, then
one of $f_{i}$ in Remark \ref{rmk:split} (applied to $f=g$) has dynamical degree $1$.
\end{rmk}

To prove the above theorems, we calculate the dynamical degrees
of self-morphisms of semi-abelian varieties.

\begin{thm}\label{main dyn deg}
Let $X$ be a semi-abelian variety over an algebraically closed field of characteristic zero.
\begin{enumerate}
\item[\rm (1)]
Let $f \colon X \longrightarrow X$ be a surjective group homomorphism.
Let 
\[
\xymatrix{
0 \ar[r] & T \ar[r] & X \ar[r]^{\pi} & A \ar[r] &0
}
\]
 be an exact sequence where $T$ is a torus and $A$ is an abelian variety.
 Then $f$ induces surjective group homomorphisms
 \begin{align*}
 f_{T}:=f|_{T}  & \colon T \longrightarrow T\\
 g & \colon  A \longrightarrow A
 \end{align*}
 with $g\circ \pi=\pi \circ f$.
 Then we have
 \[
 \delta_{f}=\max\{\delta_{g}, \delta_{f_{T}}\}.
 \]
Moreover, let $P_{T}$ and $P_{A}$ be the monic minimal polynomials of 
$f_{T}$ and $g$ as elements of $\End(T)_{\Q}$ and $\End(A)_{\Q}$ respectively.
Then, $\delta_{f_{T}}=\rho(P_{T})$ and $\delta_{g}=\rho(P_{A})^{2}$.
\item[\rm (2)]
Let $f \colon X \longrightarrow X$ be a surjective group homomorphism and
$a \in X$ a closed point. Then $\delta_{T_{a}\circ f}=\delta_{f}$.
\end{enumerate}
\end{thm}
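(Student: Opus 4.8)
First some reductions and the construction of a good compactification. A surjective homomorphism of a semi-abelian variety onto itself has finite kernel, so in part (1) the maps $f$, $f_{T}:=f|_{T}$ and $g$ are isogenies of $X$, $T$ and $A$, and $f$ is $f_{T}$-equivariant over $g$, i.e. $f(x+v)=f(x)+f_{T}(v)$ for $x\in X$ and $v\in T$. Fix a splitting $T\cong\mathbb{G}_{m}^{t}$, so that the extension $0\to T\to X\to A\to 0$ is classified by $(L_{1},\dots,L_{t})\in\Pic^{0}(A)^{t}$, and let $M\in M_{t}(\mathbb{Z})$ represent $f_{T}$ on the cocharacter lattice. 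By the theory of monomial maps (Favre--Wulcan, Lin) there is a complete $\mathbb{Q}$-factorial (possibly smooth) toric compactification $\overline{T}_{\Sigma}$ of $\mathbb{G}_{m}^{t}$ on which the monomial self-map $\overline{f_{T}}$ induced by $f_{T}$ is algebraically stable, and then $\rho\big(\overline{f_{T}}^{\,*}\mid N^{1}(\overline{T}_{\Sigma})_{\mathbb{R}}\big)=\delta_{f_{T}}=\rho(M)=\rho(P_{T})$. Form the associated bundle $\overline{X}_{\Sigma}:=X\times^{\mathbb{G}_{m}^{t}}\overline{T}_{\Sigma}$, a projective variety containing $X$ as a dense open subset, with structure morphism $p\colon\overline{X}_{\Sigma}\to A$; then $f$ extends to a dominant rational self-map $\overline{f}$ with $p\circ\overline{f}=g\circ p$ that restricts fibrewise to $\overline{f_{T}}$.

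Now I compute $\delta_{f}$. Because each $L_{i}\in\Pic^{0}(A)$, the linear relations between the horizontal toric boundary divisors of $\overline{X}_{\Sigma}$ become numerically trivial, so $N^{1}(\overline{X}_{\Sigma})_{\mathbb{R}}=p^{*}N^{1}(A)_{\mathbb{R}}\oplus N^{1}(\overline{T}_{\Sigma})_{\mathbb{R}}$, the second summand spanned by the horizontal toric divisors. Since $g$ is an isogeny (hence flat) and $\overline{f}$ is a fibrewise-$\overline{f_{T}}$ bundle map over $g$, the exceptional and indeterminacy loci of $\overline{f}$ are ``horizontal'' of the form $X\times^{\mathbb{G}_{m}^{t}}(\,\cdot\,)$; it follows that $\overline{f}$ is algebraically stable on $\overline{X}_{\Sigma}$ and that $\overline{f}^{\,*}$ is block triangular, acting as $g^{*}$ on $p^{*}N^{1}(A)_{\mathbb{R}}$ and as $\overline{f_{T}}^{\,*}$ on $N^{1}(\overline{T}_{\Sigma})_{\mathbb{R}}$. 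Hence $\delta_{f}=\rho(\overline{f}^{\,*})=\max\{\rho(g^{*}),\rho(\overline{f_{T}}^{\,*})\}=\max\{\delta_{g},\delta_{f_{T}}\}$. (Alternatively, working with any equivariant compactification, one may invoke the product formula for relative dynamical degrees of Dinh--Nguyen--Truong for the semiconjugacy $p$, which gives $\delta_{f}=\max\{d_{1}(f\mid p),\delta_{g}\}$, together with $d_{1}(f\mid p)=\delta_{f_{T}}=\rho(P_{T})$.) It remains to identify $\delta_{g}$. For an abelian variety $N^{1}(A)=\NS(A)$ and $\delta_{g}=\rho(g^{*}\mid N^{1}(A)_{\mathbb{R}})$; base changing to $\mathbb{C}$ and writing $\alpha_{1},\dots,\alpha_{\dim A}$ for the eigenvalues of the analytic representation of $g$, one has $\max_{i}|\alpha_{i}|=\rho(P_{A})$, the eigenvalues of $g^{*}$ on $H^{1,1}(A)\cong H^{1,0}(A)\otimes\overline{H^{1,0}(A)}$ are the products $\alpha_{i}\overline{\alpha_{j}}$, and fixing a polarization $h$ and identifying $\NS(A)_{\mathbb{R}}$ with the Rosati-symmetric part of $\End(A)_{\mathbb{R}}$ (so $h\mapsto\mathrm{id}$) the class $(g^{*})^{n}h$ corresponds to $(g^{n})^{\dagger}g^{n}$, whose norm grows exactly like $\rho(P_{A})^{2n}$. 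Thus $\delta_{g}=\rho(P_{A})^{2}$, which finishes part (1).

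For part (2), the key point is that $\overline{X}_{\Sigma}$ is an equivariant compactification of $X$: the torus $\mathbb{G}_{m}^{t}$ acts on the toric fibre and translations of $A$ preserve each $L_{i}\in\Pic^{0}(A)$, so the translation action of $X$ on itself extends to an action on $\overline{X}_{\Sigma}$ by automorphisms, and these automorphisms form a connected group, hence act trivially on $N^{1}(\overline{X}_{\Sigma})$. Using $f\circ T_{a}=T_{f(a)}\circ f$ one obtains $(T_{a}\circ f)^{n}=T_{b_{n}}\circ f^{n}$ with $b_{n}=a+f(a)+\cdots+f^{n-1}(a)$, hence $\overline{(T_{a}\circ f)^{n}}=\overline{T_{b_{n}}}\circ\overline{f^{n}}$ and therefore $\big(\overline{(T_{a}\circ f)^{n}}\big)^{*}H=(\overline{f^{n}})^{*}\big(\overline{T_{b_{n}}}^{\,*}H\big)=(\overline{f^{n}})^{*}H$ in $N^{1}(\overline{X}_{\Sigma})_{\mathbb{R}}$ for $H$ ample. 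Letting $n\to\infty$ in $\delta_{T_{a}\circ f}=\lim\|(\overline{(T_{a}\circ f)^{n}})^{*}H\|^{1/n}$ gives $\delta_{T_{a}\circ f}=\delta_{f}$.

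The step I expect to be the main obstacle is obtaining the compactification with its algebraic-stability property: one must import from the monomial-maps literature an algebraically stable toric model of $f_{T}$ (which may only be $\mathbb{Q}$-factorial), check that algebraic stability and the block-triangular shape of $\overline{f}^{\,*}$ persist in the bundle $\overline{X}_{\Sigma}\to A$ (where one uses flatness of the isogeny $g$ to keep the exceptional locus horizontal), and carry out the $N^{1}$ bookkeeping, the crucial input being the vanishing of the horizontal relations, which is precisely where $L_{i}\in\Pic^{0}(A)$ enters. A secondary subtlety is the lower bound $\rho(g^{*}\mid\NS(A))\ge\rho(P_{A})^{2}$, i.e. verifying that the largest eigenvalue $\rho(P_{A})^{2}$ appearing on $H^{1,1}(A)$ is actually realized on the sublattice $\NS(A)$.
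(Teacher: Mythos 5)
Your reduction of everything to an algebraically stable model is where the argument breaks: it is \emph{not} true that every monomial self-map admits a toric compactification on which it (or even some iterate) becomes algebraically stable. Favre's two-dimensional examples (matrices with non-real eigenvalues whose argument is an irrational multiple of $\pi$, which certainly occur as cocharacter matrices of surjective endomorphisms of $\mathbb{G}_m^2$) admit no stable toric model at all, and Lin's stabilization theorem needs diagonalizability and only stabilizes an iterate. Since $f_T$ here is an arbitrary isogeny of the torus, the very first input of your main route is unavailable, and without stability you only get $\delta_f\leq\rho\bigl(\overline{f}^{\,*}\bigr)$, not the equality you use. The bundle-level stability of $\overline{f}$ on $\overline{X}_\Sigma$ and the claimed decomposition of $N^1(\overline{X}_\Sigma)$ are additional unproved assertions, but they are secondary to this. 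Your fallback --- the Dinh--Nguy\^en product formula for the semiconjugacy $p$ together with the identification of the relative degree with $\delta_{f_T}$ --- is exactly what the paper's own remark after Proposition \ref{dyn deg of isog of semiab} records; it does work, but the identification $d_1(f\mid p)=\delta_{f_T}$ is itself the content of \cite[Remark 3.4]{dn} and is asserted rather than proved in your sketch, and the paper deliberately avoids this analytic input.

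The paper's actual mechanism sidesteps stability entirely and is worth internalizing: take a rational section $s$ of $\pi$ (structure theory of semi-abelian varieties), so that $f$ is birationally conjugate to $\widetilde{f}(x,y)=(g(x),\,f_T(y)+h(x))$ on $A\times\overline{T}$ with $\overline{T}=(\P^1)^t$, and then compute the pull-back of \emph{every iterate} directly: since $f^n$ is again a homomorphism, $\widetilde{f^n}$ has the same twisted-product shape, so $(\widetilde{f}^n)^*$ on $CH^1(A)\oplus CH^1(\overline{T})$ is block upper-triangular with diagonal blocks $(g^n)^*$ and $(f_T^n)^*$ for each $n$ (Claim \ref{pull-back matrix}, via Lemma \ref{pull-backs} and restriction to general horizontal and vertical slices). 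Feeding this into $\delta_{\widetilde f}=\lim_n\rho\bigl((\widetilde f^{\,n})^*|_{N^1}\bigr)^{1/n}$ gives $\delta_f=\max\{\delta_g,\delta_{f_T}\}$ with no stability hypothesis and no analytic input. Your part (2) is essentially the paper's Lemma \ref{translation and dynamical degree} (translations extend to automorphisms acting trivially on $N^1$ of an equivariant compactification) and is fine; your Rosati-involution argument for $\delta_g=\rho(P_A)^2$ is a legitimate alternative to the paper's computation via $H^2$ and $H^{1,1}$ (Lemma \ref{dyn deg of isog}), provided you note that possible polynomial factors from non-semisimplicity do not affect the $n$-th root limit.
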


\begin{rmk}\label{dyn deg of monomial map}
The description of $\delta_{f_{T}}$ in Theorem \ref{main dyn deg}(1) is well-known
(see for example \cite{sil}).
\end{rmk}

The outline of  this paper is as follows.
In \S \ref{sec:notation and preliminaries}, we fix our notation, give the definitions of the dynamical and arithmetic degrees, and
summarize their basic properties for later use.
In \S \ref{splitting}, we prove a lemma that says every group homomorphism of a semi-abelian variety
``splits into rather simple ones''.
In \S \ref{sec:abelian varieties}, we prove our main theorems for isogenies of abelian varieties.
We use these to prove the main theorems.
In \S \ref{subsec:semi-abelian dyn deg}, we calculate the first dynamical degrees of self-morphisms of semi-abelian varieties
and prove Theorem \ref{main dyn deg}.
In \S \ref{subsec:semi-abelian arith deg}, we prove Theorem \ref{main semiab} and \ref{main2 semiab}.

\section{Notation and Preliminaries}\label{sec:notation and preliminaries}

\subsection{Notation}

In this paper, the ground field is either $ \overline{\mathbb Q}$ or an arbitrary algebraically closed field of
characteristic zero.
A variety is a separated irreducible reduced scheme of finite type over an algebraically closed field $k$.
Let $X$ be a variety over $k$ and $f \colon X \dashrightarrow X$ a rational map.
We use the following notation:
\begin{notation}
\item[$\CH^{1}(X)$] The group of codimension one cycles on $X$ modulo rational equivalence is denoted by $\CH^{1}(X)$.
\item[$N^{1}(X)$] For a complete variety $X$, the group of divisors modulo numerical equivalence is denoted by $N^{1}(X)$.
\item[$I_{f}$] The indeterminacy locus of $f$ is denoted by $I_{f}$.
\item[$X_{f}(k)$ ] We write $X_{f}(k)=\{x\in X(k) \mid \text{$f^{n}(x)\notin I_{f}$ for every $n\geq0$}\}$.
\item[$O_{f}(x)$] The $f$-orbit of $x\in X_{f}$ is denoted by $O_{f}(x)$ i.e. $O_{f}(x)=\{f^{n}(x)\mid n\geq 0\}$.
\item[$h_{D}$ ] When $k= \overline{\mathbb Q}$, a Weil height function associated with a divisor $D$ is denoted by $h_{D}$. 
We refer to \cite{bg, hs} for the definitions and basic properties of Weil height functions.
\item[$\rho(T)$] For an endomorphism $T \colon V \longrightarrow V$ of a finite dimensional real vector space $V$,
the maximum among the absolute values of the eigenvalues of $T$ is called the spectral radius of $T$ and denoted by $\rho(T)$.
\item[$\rho(F)$] For a polynomial $F\in \C[t]$, the maximum among the absolute values of the roots of $F$ is denoted by
$\rho(F)$ and called the spectral radius of $F$.
\item[$M_{K}$] For a $\Z$-module $M$ and a field $K$, we write $M_{K}=M {\otimes}_{\Z}K$.
\item[$T_{a}$] Let $X$ be a commutative algebraic group and $a\in X(k)$ a point. The translation by $a$ is denoted by $T_{a}$.
\end{notation}

\subsection{Dynamical degrees}

Let $X, Y$ be smooth projective varieties defined over an algebraically closed field of characteristic zero
and $f \colon X \dashrightarrow Y$ a (not necessarily dominant) rational map.
We define pull-back $f^{*} \colon \CH^{1}(Y) \longrightarrow \CH^{1}(X)$ as follows.
Take a resolution of indeterminacy $\pi \colon X' \longrightarrow X$ of $f$ with $X'$ smooth projective.
Write $f'=f \circ \pi $.
Then we define $f^{*}=\pi_{*}\circ {f'}^{*}$.
This is independent of the choice of resolution.

\begin{defn}
Let $f \colon X \dashrightarrow X$ be a dominant rational map.
Fix an ample divisor $H$ on $X$.
Then the (first) dynamical degree of $f$ is
\[
\delta_{f}=\lim_{n \to \infty} ((f^{n})^{*}H\cdot H^{\dim X-1})^{1/n}.
\]
This is independent of the choice of $H$.
We refer to \cite{dang, df, tru0}, \cite[\S 4]{ds} for basic properties of dynamical degrees.
\end{defn}

\begin{rmk}\label{rmk on dyn deg}\ 

\begin{enumerate}
\item There are other definitions of the dynamical degree.
Fix a norm $\left\| \cdot \right\|$ on $\Hom(N^{1}(X)_{\R}, N^{1}(X)_{\R})$.
Then $ \delta_{f}=\lim_{n \to \infty}\left\|(f^{n})^{*}\right\|^{1/n}$.
When $f$ is a morphism, $\delta_{f}$ is the spectral radius of $f^{*} \colon N^{1}(X)_{\R} \longrightarrow N^{1}(X)_{\R}$.
If the ground field is $\C$, this is equal to the spectral radius of $f^{*} \colon H^{1,1}(X) \longrightarrow H^{1,1}(X)$
(cf. \cite[\S 4]{ds}).
\item Dynamical degree is a birational invariant. That is, if $\pi \colon X \dashrightarrow X'$ is a birational map
and $f \colon X \dashrightarrow X$ and $f' \colon X' \dashrightarrow X'$ are conjugate by $\pi$, then $\delta_{f}=\delta_{f'}$.
\item Let $X, Y$ be  smooth projective varieties and $f \colon X \dashrightarrow X$, $g \colon Y \dashrightarrow Y$ dominant rational maps.
Then, by definition, it is easy to see that $\delta_{f\times g}=\max\{ \delta_{f}, \delta_{g}\}$.
\end{enumerate}
\end{rmk}

\subsection{Arithmetic degrees}

In this subsection, the ground field is $ \overline{\mathbb Q}$.

\begin{defn}
Let $f \colon X \dashrightarrow X$ be a rational self-map of a smooth quasi-projective variety. 
\begin{enumerate}
\item[\rm (1)] A point $x\in X_{f}(\QQ)$ is called $f$-preperiodic if the orbit $O_{f}(x)=\{ f^{n}(x)\mid n\geq0\}$ is a finite set.
\item[\rm (2)] 
Fix a smooth projective variety $ \overline{X}$ and an open embedding $X \subset \overline{X}$.
Let $H$ be an ample divisor on $ \overline{X}$ and take a Weil height function $h_{H}$ associated with $H$.
The arithmetic degree $ \alpha_{f}(x)$ of $f$ at $x \in X_{f}(\QQ)$ is defined by
\[
\alpha_{f}(x)=\lim_{n \to \infty}\max\{1, h_{H}(f^{n}(x))\}^{1/n}
\]
if the limit exists.
Since the convergence of this limit is not proved in general, we introduce the following:
\begin{align*}
&\overline{\alpha}_{f}(x)=\limsup_{n \to \infty}\max\{1, h_{H}(f^{n}(x))\}^{1/n},\\
& \underline{\alpha}_{f}(x)=\liminf_{n \to \infty}\max\{1, h_{H}(f^{n}(x))\}^{1/n}.
\end{align*}
We call $ \overline{\alpha}_{f}(x)$ the upper arithmetic degree and $ \underline{\alpha}_{f}(x)$ the lower arithmetic degree.
 The definitions of the (upper, lower) arithmetic degrees do not depend on the choice of $ \overline{X}, H$ and $h_{H}$ (\cite[Proposition 12]{ks3} \cite[Theorem 3.4]{mss}). 
\item[\rm (3)] Suppose that $ \alpha_{f}(x)$ exists for every $x \in X_{f}(\QQ)$.
Then we write $A(f)=\{ \alpha_{f}(x) \mid x \in X_{f}(\QQ)\}$.
\end{enumerate}
\end{defn}
 
 \begin{rmk}
 By definition, $1 \leq \underline{\alpha}_{f}(x)\leq \overline{\alpha}_{f}(x)$. When $x$ is $f$-preperiodic, $ \alpha_{f}(x)=1$.
 \end{rmk}
 
 \begin{rmk}
 When $X$ is projective and $f$ is a morphism, $ \alpha_{f}(x)$ exists for every $x \in X$ \cite[Theorem 3]{ks1}. 
 \end{rmk}

\begin{lem}\label{product case}
Let $X, Y$ be smooth quasi-projective varieties and $f \colon X \dashrightarrow X$, $g \colon Y \dashrightarrow Y$
rational maps.
Let $x \in X_{f}(\QQ)$ and $y \in Y_{g}(\QQ)$.
If $ \alpha_{f}(x)$ and $ \alpha_{g}(y)$ exist, then $ \alpha_{f\times g}(x,y)$ also exists and
\[
\alpha_{f\times g}(x,y)=\max\{ \alpha_{f}(x), \alpha_{g}(y) \}.
\]
\end{lem}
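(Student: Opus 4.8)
The plan is to reduce the statement to the elementary fact that the arithmetic degree measures the exponential growth rate of a height sequence, combined with the additivity of heights on products.

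First I would fix smooth projective compactifications $X\subset\overline{X}$ and $Y\subset\overline{Y}$ together with ample divisors $H_X$ on $\overline{X}$ and $H_Y$ on $\overline{Y}$. Since a Weil height function attached to an ample divisor is bounded below, after adding suitable constants I may assume the chosen height functions satisfy $h_{H_X}\geq 1$ and $h_{H_Y}\geq 1$ on $\overline{X}(\QQ)$ and $\overline{Y}(\QQ)$ respectively. Then $\overline{X}\times\overline{Y}$ is a smooth projective compactification of $X\times Y$, the divisor $H=\pr_{1}^{*}H_X+\pr_{2}^{*}H_Y$ is ample, and $h_H:=h_{H_X}\circ\pr_{1}+h_{H_Y}\circ\pr_{2}$ is a Weil height function associated with $H$ with $h_H\geq 1$. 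As the upper and lower arithmetic degrees are independent of the chosen compactification and height function, I may compute $\alpha_{f\times g}$ using this data.

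Next, using the orbit identity $(f\times g)^{n}(x,y)=(f^{n}(x),g^{n}(y))$ and $I_{f\times g}\subseteq (I_f\times Y)\cup(X\times I_g)$, the hypotheses $x\in X_f(\QQ)$ and $y\in Y_g(\QQ)$ give $(x,y)\in (X\times Y)_{f\times g}(\QQ)$, so the orbit heights are defined for all $n$. Setting $a_n=\max\{1,h_{H_X}(f^{n}(x))\}$, $b_n=\max\{1,h_{H_Y}(g^{n}(y))\}$ and $c_n=\max\{1,h_H((f\times g)^{n}(x,y))\}$, the choices above force $a_n=h_{H_X}(f^{n}(x))$, $b_n=h_{H_Y}(g^{n}(y))$ and $c_n=a_n+b_n$, whence $\max\{a_n,b_n\}\leq c_n\leq 2\max\{a_n,b_n\}$ for every $n$.

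Finally I would take $n$-th roots and pass to the limit. Since $2^{1/n}\to 1$, a squeeze gives $\lim_{n}c_n^{1/n}=\lim_{n}\max\{a_n,b_n\}^{1/n}=\lim_{n}\max\{a_n^{1/n},b_n^{1/n}\}$, and since $\alpha_f(x)=\lim_{n}a_n^{1/n}$ and $\alpha_g(y)=\lim_{n}b_n^{1/n}$ exist by hypothesis and $\max$ is continuous, this limit exists and equals $\max\{\alpha_f(x),\alpha_g(y)\}$. There is no genuinely hard step here; the only point requiring a little care is arranging the product height to be literally the sum of the two factor heights, which makes the two-sided bound on $c_n$ exact. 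If instead one keeps the usual $O(1)$ ambiguity in the height identity, the same conclusion follows after taking $n$-th roots, using that $\max\{1,u+v+O(1)\}^{1/n}$ and $(\max\{1,u\}\max\{1,v\})^{1/n}$ tend to the same limit when $u,v\geq 1$.
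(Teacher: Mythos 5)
Your proposal is correct and follows essentially the same route as the paper: reduce to projective compactifications, use the height $h_{H_X}\circ\pr_1+h_{H_Y}\circ\pr_2$ (normalized to be $\geq 1$) for the ample divisor $\pr_1^*H_X+\pr_2^*H_Y$ on the product, and observe that the growth rate of $a_n+b_n$ is the maximum of the two growth rates. Your explicit squeeze $\max\{a_n,b_n\}\leq c_n\leq 2\max\{a_n,b_n\}$ and the check that $(x,y)$ lies in $(X\times Y)_{f\times g}(\QQ)$ simply spell out details the paper leaves implicit.
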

\begin{proof}
It is enough to prove when $X, Y$ are projective.
Take ample divisors $H_{X}, H_{Y}$ on $X, Y$ respectively.
Fix associated height functions $h_{H_{X}}, h_{H_{Y}}$ so that $h_{H_{X}}\geq1$ and $h_{H_{Y}}\geq 1$.
Then $h:=h_{H_{X}}\circ \pr_{1}+h_{H_{Y}}\circ \pr_{2}$ is an ample height function on $X\times Y$.
Then 
\begin{align*}
&\lim_{n \to \infty}h((f\times g)^{n}(x,y))^{1/n}=\lim_{n \to \infty}(h_{H_{X}}(f^{n}(x))+h_{H_{Y}}(g^{n}(y)))^{1/n}\\
&=\max\{\lim_{n\to\infty}h_{H_{X}}(f^{n}(x))^{1/n},\lim_{n\to\infty}h_{H_{Y}}(g^{n}(y))^{1/n} \}
=\max\{ \alpha_{f}(x), \alpha_{g}(y)\}.
\end{align*}
\end{proof}

\begin{lem}\label{ascent}
Consider the following commutative diagram
\[
\xymatrix{
Y \ar@{-->}[r]^{g} \ar[d]_{\pi} & Y \ar[d]^{\pi}\\
X \ar@{-->}[r]_{f} & X
}
\]
where $X, Y$ are smooth quasi-projective varieties, $f,g$ rational maps
and $\pi$ a surjective morphism.
Let $y\in Y_{g}(\QQ)$ be a point such that $\pi(y)\in X_{f}(\QQ)$.
Then
\begin{align*}
\overline{\alpha}_{g}(y)\geq \overline{\alpha}_{f}(x)\\
\underline{\alpha}_{g}(y) \geq \underline{\alpha}_{f}(x).
\end{align*}
\end{lem}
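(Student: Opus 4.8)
The plan is to reduce to the projective case and then compare heights directly using functoriality of Weil height machine. First I would replace $X$ and $Y$ by smooth projective compactifications: choose open embeddings $X\subset \overline{X}$, $Y\subset \overline{Y}$ with $\overline{X},\overline{Y}$ smooth projective, and (after blowing up $\overline{Y}$ if necessary) arrange that $\pi$ extends to a dominant rational map $\overline{\pi}\colon \overline{Y}\dashrightarrow \overline{X}$; since the arithmetic degrees are independent of the choice of compactification, it suffices to work with these. I would also note that $x:=\pi(y)\in X_f(\QQ)$ is given, so $f^n(x)$ is defined for all $n$, and likewise $g^n(y)$ for all $n$, and the diagram gives $\pi(g^n(y))=f^n(\pi(y))=f^n(x)$ for every $n\ge 0$.

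The key step is a height comparison. Fix an ample divisor $H$ on $\overline{X}$; then $\overline{\pi}^*H$ is a divisor on $\overline{Y}$ (well defined up to the indeterminacy of $\overline{\pi}$, but its class suffices), and we may write $\overline{\pi}^*H \le H_Y$ for some ample divisor $H_Y$ on $\overline{Y}$. By functoriality of Weil heights (on the locus where $\overline{\pi}$ is defined, which contains all the points $g^n(y)$ after possibly shrinking — here one uses that $\overline{\pi}$ restricted to $Y$ is the honest morphism $\pi$, so in fact $g^n(y)\in Y$ and $\overline{\pi}$ is a morphism near each $g^n(y)$) one gets
\[
h_{H}(f^n(x)) = h_{H}(\pi(g^n(y))) = h_{\pi^*H}(g^n(y)) + O(1) \le h_{H_Y}(g^n(y)) + O(1),
\]
where the $O(1)$ is a constant independent of $n$. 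Taking $\max\{1,\,\cdot\,\}$, raising to the power $1/n$, and passing to $\limsup$ (resp.\ $\liminf$) over $n$, the bounded term $O(1)$ disappears in the limit and one obtains $\overline{\alpha}_f(x)\le \overline{\alpha}_g(y)$ and $\underline{\alpha}_f(x)\le\underline{\alpha}_g(y)$, which is the claim.

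The main obstacle is purely bookkeeping around indeterminacy: making sure that the functoriality estimate $h_{H}\circ\pi = h_{\pi^*H} + O(1)$ is valid at \emph{all} the points $g^n(y)$ with a single uniform constant. This is where it matters that $\pi\colon Y\to X$ is an honest surjective morphism of the quasi-projective varieties: although we passed to compactifications where $\overline{\pi}$ may only be rational, each $g^n(y)$ lies in the open set $Y\subset\overline{Y}$ on which $\overline{\pi}$ agrees with the morphism $\pi$; resolving the indeterminacy of $\overline{\pi}$ by $\mu\colon \widetilde{Y}\to\overline{Y}$ with $\widetilde{\pi}=\overline{\pi}\circ\mu$ a morphism, the points $g^n(y)$ lift canonically to $\widetilde{Y}$ (since they avoid the center of $\mu$), and the standard functoriality of heights for the morphism $\widetilde{\pi}$ applies with a uniform $O(1)$. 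Everything else — independence of the choice of compactification and height function, and the elementary fact that $\lim (c_n)^{1/n}$ is unchanged by adding a bounded sequence to $c_n$ when $c_n\ge 1$ — is routine and already recorded in the references cited in the excerpt.
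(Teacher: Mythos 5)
Your proposal is correct and follows essentially the same route as the paper: reduce to projective models on which $\pi$ is (or lifts to) a morphism, dominate $\pi^{*}H_{X}$ by an ample divisor $H_{Y}$, and compare $h_{H_{X}}(f^{n}(x))=h_{H_{X}}(\pi(g^{n}(y)))\leq h_{H_{Y}}(g^{n}(y))+O(1)$ before taking $\limsup$/$\liminf$; the paper simply says ``we may assume $X,Y$ projective'' where you spell out the compactification bookkeeping. The only small point to tighten is to choose $H_{Y}$ with $H_{Y}-\pi^{*}H_{X}$ ample (as the paper does) rather than merely writing $\pi^{*}H_{X}\leq H_{Y}$, so that the height inequality holds with a uniform $O(1)$ at every point and not just off the support of an effective difference.
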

\begin{proof}
We may assume $X, Y$ are projective.
Take an ample divisor $H_{X}$ on $X$ and fix an associated height function $h_{H_{X}}$ with $h_{H_{X}}\geq 1$.
Take an ample divisor $H_{Y}$ on $Y$ so that $H_{Y}-\pi^{*}H_{X}$ is ample.
Then we can take a height function $h_{H_{Y}}$ associated with $H_{Y}$ so that $h_{H_{Y}}\geq h_{H_{X}}\circ \pi$.
Then
\begin{align*}
\overline{\alpha}_{f}(x)=&\limsup_{n\to \infty}h_{H_{X}}(f^{n}(x))^{1/n}=\limsup_{n\to \infty}h_{H_{X}}(\pi(g^{n}(y)))^{1/n}\\
&\leq \limsup_{n\to \infty}h_{H_{Y}}(g^{n}(y))^{1/n}= \overline{\alpha}_{g}(y).
\end{align*}
The second inequality can be proved similarly. 
\end{proof}

\begin{lem}\label{comparing arithmetic deg}
Consider the following commutative diagram
\[
\xymatrix{
Y \ar@{-->}[r]^{g} \ar[d]_{\pi} & Y \ar[d]^{\pi}\\
X \ar@{-->}[r]_{f} & X
}
\]
where $X, Y$ are smooth projective varieties and $f,g$ rational maps.
Suppose there exists a non-empty open subset $U \subset X$ such that $\pi \colon V:=\pi^{-1}(U) \longrightarrow U$ is finite.

Let $y\in Y(\QQ)$ such that
$y \in Y_{g}(\QQ)$, 
$x:=\pi(y)\in X_{f}(\QQ)$, $O_{g}(y)\subset V$ and $O_{f}(x) \subset U$.
If $ \alpha_{g}(y)$ exists, then $ \alpha_{f}(x)$ also exists and $ \alpha_{g}(y)= \alpha_{f}(x)$.
\end{lem}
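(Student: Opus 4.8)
The plan is to deduce $\alpha_{f}(x)=\alpha_{g}(y)$ from the invariance of the (upper and lower) arithmetic degrees under change of compactification (\cite[Proposition 12]{ks3}, \cite[Theorem 3.4]{mss}), by exhibiting on the $Y$-side a compactification of $V$ carrying a \emph{finite} morphism to $X$ that extends $\pi|_{V}$. Set $X'=\overline{\pi(Y)}\subseteq X$, a projective subvariety, and fix an ample divisor $H_{X}$ on $X$ with a height $h_{H_{X}}\geq 1$. Since $\pi|_{V}\colon V\to U$ is finite, its image is closed in $U$ and dense in $X'$, hence equals $X'\cap U$, so $\pi|_{V}\colon V\to X'\cap U$ is finite and surjective; in particular $\pi\colon Y\to X'$ is dominant with $\dim Y=\dim X'$, so $k(Y)/k(X')$ is finite. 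Let $\bar\pi\colon\bar V\to X'$ be the normalization of $X'$ in $k(Y)$, a finite surjective morphism from a normal projective variety $\bar V$. Because $V$ is an open subvariety of the smooth variety $Y$ it is normal, and since normalization commutes with restriction to open subsets, $\bar\pi^{-1}(X'\cap U)$ is the normalization of $X'\cap U$ in $k(Y)$; being normal, finite and surjective over $X'\cap U$ with function field $k(Y)$, $V$ is canonically identified with $\bar\pi^{-1}(X'\cap U)$. Thus $V$ is an open subvariety of $\bar V$, and $\bar\pi|_{V}$, regarded as a map into $X$, equals $\pi|_{V}$.

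Now $H_{X}|_{X'}$ is ample on $X'$ and, $\bar\pi$ being finite, $\widehat H:=\bar\pi^{*}(H_{X}|_{X'})$ is ample on $\bar V$; a height associated with $\widehat H$ is $h_{H_{X}}\circ(\iota_{X'}\circ\bar\pi)\geq 1$, where $\iota_{X'}\colon X'\hookrightarrow X$. Because the diagram commutes and $y\in Y_{g}(\QQ)$, $x=\pi(y)\in X_{f}(\QQ)$, we have $\pi(g^{n}(y))=f^{n}(x)$ for all $n\geq 0$; combining this with $\bar\pi|_{V}=\pi|_{V}$ and $O_{g}(y)\subseteq V$ gives
\[
h_{\widehat H}(g^{n}(y))=h_{H_{X}}(f^{n}(x))+O(1)\qquad (n\geq 0).
\]
The variety $\bar V$ is a projective compactification of $V$ carrying the ample divisor $\widehat H$, and $O_{g}(y)\subseteq V$; hence, by the compactification invariance of the arithmetic degrees, applied with the (possibly singular) model $\bar V$ — equivalently after replacing $\bar V$ by a resolution that is an isomorphism over the smooth locus, which contains $V$ — we may compute $\overline{\alpha}_{g}(y)$ and $\underline{\alpha}_{g}(y)$ as the $\limsup$ and $\liminf$ of $\max\{1,h_{\widehat H}(g^{n}(y))\}^{1/n}$. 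Inserting the displayed identity and using that the bounded error disappears after extracting $n$-th roots, we obtain $\overline{\alpha}_{g}(y)=\overline{\alpha}_{f}(x)$ and $\underline{\alpha}_{g}(y)=\underline{\alpha}_{f}(x)$, the right-hand sides being computed with $\overline{X}=X$ (legitimate since $X$ is already smooth projective). One also recovers the inequalities $\overline{\alpha}_{f}(x)\le\overline{\alpha}_{g}(y)$, $\underline{\alpha}_{f}(x)\le\underline{\alpha}_{g}(y)$ from Lemma \ref{ascent} when $\pi$ is surjective.

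Finally, since $\alpha_{g}(y)$ exists we have $\overline{\alpha}_{g}(y)=\underline{\alpha}_{g}(y)=\alpha_{g}(y)$; by the previous paragraph $\overline{\alpha}_{f}(x)=\underline{\alpha}_{f}(x)$, so $\alpha_{f}(x)$ exists and $\alpha_{f}(x)=\alpha_{g}(y)$. I expect the main delicate point to be the height comparison across the two compactifications $\bar V$ (or its resolution) and $Y$ — equivalently, the standard fact that under a finite morphism an ample height upstairs is bounded by a constant multiple of the pulled-back ample height downstairs. This is precisely where the \emph{finiteness} of $\pi|_{V}$, rather than mere surjectivity of $\pi$ (for which the conclusion is false), is used, through the ampleness of $\bar\pi^{*}(H_{X}|_{X'})$; a secondary point is that the invariance statement as cited is for smooth compactifications, which is why one passes to a resolution of $\bar V$ that is an isomorphism over $V$.
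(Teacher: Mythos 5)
Your construction up to the height identity is sound, and it is in fact essentially the paper's own device in different clothing: the finite morphism $\bar\pi\colon \bar V\to X'$ obtained by normalizing $X'=\overline{\pi(Y)}$ in $k(Y)$ is, up to normalization, the Stein factorization $Y\to Z\to X$ of $\pi$ that the paper uses, $V$ sits inside it as an open subset because $\pi|_{V}$ is finite, $\widehat H=\bar\pi^{*}(H_{X}|_{X'})$ is ample, and the identity $h_{\widehat H}(g^{n}(y))=h_{H_{X}}(f^{n}(x))+O(1)$ along the orbit is the same starting point as the paper's $h_{H}(f^{n}(x))=h_{\pi^{*}H}(g^{n}(y))$.

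The gap is the sentence in which you conclude that $\overline{\alpha}_{g}(y)$ and $\underline{\alpha}_{g}(y)$ ``may be computed'' as the $\limsup$ and $\liminf$ of $\max\{1,h_{\widehat H}(g^{n}(y))\}^{1/n}$ by invoking compactification invariance for the (possibly singular) model $\bar V$, ``equivalently after replacing $\bar V$ by a resolution.'' The invariance statements you cite (\cite[Proposition 12]{ks3}, \cite[Theorem 3.4]{mss}) are formulated for smooth projective models and heights attached to ample divisors, so they do not apply to $\bar V$ directly; and once you pass to a resolution $\mu\colon\widetilde V\to\bar V$ that is an isomorphism over $V$, the divisor you are actually computing with becomes $\mu^{*}\widehat H$, which is only nef and big, not ample, so the cited invariance still does not give your claim. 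Closing this requires precisely the technical core of the paper's proof: write $\mu^{*}\widehat H=A+E$ with $A$ ample and $E$ effective and $\mu$-exceptional (so $\Supp E$ is disjoint from the preimage of $V$, cf. \cite[II, Proposition 7.10]{har}, \cite[Lemma 2.62]{kolmor}); use that a height attached to an effective divisor is bounded below off its support (\cite[Theorem B.3.2]{hs}) to get $h_{A}+O(1)\le h_{\mu^{*}\widehat H}\le C\,h_{A}+O(1)$ along the orbit; and use \cite[Theorem 3.4(i)]{mss} to transfer $\alpha_{g}(y)$ from $Y$ to a smooth model of $V$ carrying $A$ --- noting that your $\widetilde V\dashrightarrow Y$ is only a birational map, not a morphism, so one needs a common smooth model dominating both, which is what the paper's $\widetilde Y$ provides. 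Relatedly, your closing diagnosis mislocates the difficulty: the ``finite morphism pulls back ample to ample'' point is easy and you have already used it; the step your citations do not cover, and which is the actual content of the lemma, is the comparison between the ample height on the finite-over-$X$ model and an ample height on a smooth compactification along an orbit confined to $V$.
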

\begin{proof}
Take an ample divisor $H$ on $X$ and let $h_{H}$ be a height function associated with $H$.
We choose $h_{H}$ so that $h_{H}\geq 1$.
Then we have
\begin{align}\label{eq 0}
h_{H}(f^{n}(x))=h_{H}(\pi(g^{n}(y)))=h_{\pi^{*}H}(g^{n}(y)).
\end{align}
Here we choose $h_{\pi^{*}H}$ so that $h_{\pi^{*}H}=h_{H}\circ \pi$.

Let $\xymatrix{Y \ar[r]^{\beta} & Z \ar[r]^{\alpha} & X}$ be the Stein factorization of $\pi$.
Then $\beta \colon V \longrightarrow \alpha^{-1}(U)$ is an isomorphism.
Let $\nu \colon \widetilde{Z} \longrightarrow Z$ be a resolution of singularities that is a composite of
blow-ups with center in the singular locus of $Z$.
In particular, the blow-up centers do not intersect with $\alpha^{-1}(U)$.
Let $\mu \colon \widetilde{Y} \longrightarrow Y$ be a resolution of indeterminacy of $Y \dashrightarrow \widetilde{Z}$ that
is a composite of blow-ups along smooth centers outside $V$.
The situation is summarized in the following diagram:
\[
\xymatrix{
	& \widetilde{Y} \ar[dl]_{\mu} \ar[dr]^{ \widetilde{\beta}} &	\\
Y \ar[dd]_{\pi} \ar[dr]^{\beta} &	 & \widetilde{Z}  \ar[dl]^{\nu}\\
	&Z \ar[dl]^{\alpha} &	\\
X &	&	.
}
\]
Then, since $\nu$ is a sequence of blow-ups and $\alpha^{*}H$ is ample (because $\alpha$ is finite),
there exists an effective $\nu$-exceptional $\Q$-divisor $E_{\nu}$ on $ \widetilde{Z}$ such that 
\begin{align*}
\nu^{*}\alpha^{*}H-E_{\nu}
\end{align*}
is ample (cf.  \cite[II Proposition 7.10 (b)]{har}).  
Also, since $ \widetilde{Z}$ is smooth, there exists an effective $\Q$-divisor $E_{ \widetilde{\beta}}$ on $ \widetilde{Y}$ that is $ \widetilde{\beta}$-exceptional
such that
\begin{align*}
\widetilde{\beta}^{*}(\nu^{*}\alpha^{*}H-E_{\nu})-E_{ \widetilde{\beta}}
\end{align*}
is ample. 
(To see this, use \cite[Lemma 2.62]{kolmor} or
write $ \widetilde{\beta}$ as a blow up along an ideal and apply \cite[II Proposition 7.10 (b)]{har}.)
Set $A=\widetilde{\beta}^{*}(\nu^{*}\alpha^{*}H-E_{\nu})-E_{ \widetilde{\beta}}$ and
$E= \widetilde{\beta}^{*}E_{\nu}+E_{ \widetilde{\beta}}$.
Then
$A$ is ample, $E$ is effective, $\Supp E\cap \mu^{-1}(V)=\emptyset$ and
$\mu^{*}\pi^{*}H=A+E$.

Let $ \widetilde{y}=\mu^{-1}(y)$. Let $ \widetilde{g}=\mu^{-1}\circ g \circ \mu \colon \widetilde{Y} \dashrightarrow \widetilde{Y}$
be the rational map induced by $g$.
(Since $g(y)\in V$ and $\mu$ is isomorphic over $V$, $ \mu^{-1}\circ g\circ \mu$ is well-defined.)
Then by \cite[Theorem 3.4(i)]{mss}, $ \alpha_{ \widetilde{g}}( \widetilde{y})$ exists and equals $ \alpha_{g}(y)$ since $ \alpha_{g}(y)$ exists.
(Note that \cite[Theorem 3.4(i)]{mss} works for possibly non-dominant rational maps $f, g$.)
Choose height function $h_{A+E}$ so that $h_{A+E}=h_{\pi^{*}H}\circ \mu$.
Then
\begin{align}\label{eq 1}
h_{\pi^{*}H}(g^{n}(y))=h_{A+E}( \widetilde{g}^{n}( \widetilde{y})).
\end{align}
Since the $ \widetilde{g}$-orbit of $ \widetilde{y}$ does not intersect with $\Supp E$,
we have
\begin{align}\label{ineq 1}
h_{A+E}( \widetilde{g}^{n}( \widetilde{y})) \geq h_{A}( \widetilde{g}^{n}( \widetilde{y}))+O(1)
\end{align}
where $h_{A}$ is a height associated with $A$.
Here, we use the fact that any height function associated with an effective divisor 
is bounded below outside the support of the divisor (cf.  \cite[Theorem B.3.2.(e)]{hs}).
If $C>0$ is a positive number with $CA-(A+E)$ is ample, we have
\begin{align}\label{ineq 2}
h_{A+E}( \widetilde{g}^{n}( \widetilde{y})) \leq Ch_{A}( \widetilde{g}^{n}( \widetilde{y}))+O(1).
\end{align}

By (\ref{eq 0}), (\ref{eq 1}),  (\ref{ineq 1}) and  (\ref{ineq 2}), we have
\begin{align*}
h_{A}( \widetilde{g}^{n}( \widetilde{y}))+O(1) \leq h_{H}(f^{n}(x)) \leq Ch_{A}( \widetilde{g}^{n}( \widetilde{y}))+O(1).
\end{align*}
Since $ \alpha_{ \widetilde{g}}( \widetilde{y})=\lim_{n \to \infty}\max\{1, h_{A}( \widetilde{g}^{n}( \widetilde{y}))\}^{1/n}$
exists and is equal to $ \alpha_{g}(y)$,
we get $\lim_{n \to \infty} h_{H}(f^{n}(x))^{1/n}= \alpha_{g}(y)$.
\end{proof}

\subsection{Kawaguchi-Silverman conjecture}

In \cite{ks3}, Kawaguchi and Silverman formulated the following conjecture.

\begin{conj}\label{ksc}
Let $X$ be a smooth quasi-projective variety and $f \colon X \dashrightarrow X$ a dominant rational map,
both defined over $ \overline{\mathbb Q}$.
Let $x \in X_{f}( \overline{\mathbb Q})$.
\begin{enumerate}
\item[\rm (1)] The limit defining $\alpha_{f}(x)$ exists.
\item[\rm (2)] The arithmetic degree $\alpha_{f}(x)$ is an algebraic integer.
\item[\rm (3)] The collections of the arithmetic degrees $\{ \alpha_{f}(x) \mid x \in X_{f}( \overline{\mathbb Q}) \}$
is a finite set.
\item[\rm (4)] If the orbit $ O_{f}(x)=\{ f^{n}(x) \mid n=0,1,2, \dots \}$ is Zariski dense in $X$,
then $\alpha_{f}(x)=\delta_{f}$.
\end{enumerate}
\end{conj}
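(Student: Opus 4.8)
The plan is to establish the conjecture for self-morphisms of semi-abelian varieties — that is, to prove Theorem~\ref{main semiab}, which contains Conjecture~\ref{ksc} for this class (a dominant self-morphism of a semi-abelian variety is automatically surjective). Throughout I use the general upper bound $\overline\alpha_f(x)\le\delta_f$, valid for any morphism of a (quasi-)projective variety by \cite{ks1,mss}; the work is then to produce matching lower bounds on Zariski-dense orbits, to prove convergence of the defining limit, and to identify $A(f)$. \emph{Reduction to a product of ``simple'' homomorphisms.} Write $f=T_a\circ g$ with $g$ a homomorphism. When $g-\id$ is an isogeny one chooses $b\in X(\QQ)$ with $(\id-g)(b)=a$ and checks $T_{-b}\circ f\circ T_b=g$, so $\alpha_f$, $A(f)$ and $\delta_f$ are unchanged; in general one first splits off, via the splitting lemma of \S\ref{splitting}, the maximal sub-semi-abelian variety on which $g$ acts with eigenvalues $0$ or roots of unity — there $\delta=1=\overline\alpha$ — and applies the previous remark to a complement. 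This reduces everything to a homomorphism, and the splitting lemma then yields an isogeny conjugating it to a product $f_0\times f_1\times\cdots\times f_r$ in which $f_0$ has dynamical degree $1$ and each $f_i$ ($i\ge1$) is a homomorphism of a semi-abelian variety $X_i$ whose minimal polynomial is a power of a single irreducible $F_i$, hence acts with all eigenvalues of one modulus $\rho_i:=\rho(F_i)$. Since dynamical and arithmetic degrees of products are maxima (Remark~\ref{rmk on dyn deg}(3), Lemma~\ref{product case}) and — via Lemmas~\ref{ascent} and~\ref{comparing arithmetic deg}, with the finite locus taken to be the open semi-abelian part — are invariant under isogeny conjugation, it suffices to treat each $f_i$ and to know $\delta_{f_i}$: by Theorem~\ref{main dyn deg}, $\delta_{f_i}=\rho_i$ if $X_i$ is a torus and $\rho_i^2$ if its abelian quotient is nonzero.

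\emph{The two building blocks.} If $X_i$ is a torus, $f_i$ is monomial with integer matrix $M$ having all eigenvalues of modulus $\rho_i$; the height of a point of $X_i(\QQ)$ is, up to bounded error on a fixed toric compactification, the norm of the finite/archimedean log-absolute-value vector $v$, and $|M^nv|\asymp\rho_i^{\,n}$ for $v\neq 0$ while $v=0$ characterizes torsion points, so $\alpha_{f_i}$ exists and equals $\rho_i$ off torsion and $1$ on torsion (the toric case, \cite{sil,lin}); the same holds after composing with a translation of the torus. If $X_i=B_i$ is an abelian variety, fix a symmetric ample $H$ with Néron--Tate height $\widehat h_H$, base-change to a number field $K$ over which $x$ and $f_i$ are defined so $B_i(K)$ is finitely generated, and view $\widehat h_H$ as a quadratic form $q$ on $B_i(K)_\R$, positive definite modulo torsion, on which $g_i:=f_i$ acts linearly with all eigenvalues of modulus $\rho_i$; then for $x$ non-torsion, $\|g_i^{\,n}x\|\ge C^{-1}n^{-d}\rho_i^{\,n}\|x\|$ forces $\widehat h_H(g_i^{\,n}x)=q(g_i^{\,n}x)$ to grow like $\rho_i^{2n}$, giving $\alpha_{g_i}(x)=\rho_i^2=\delta_{g_i}$, while on torsion points the orbit is finite and $\alpha_{g_i}=1$. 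Hence $A(f_i)=\{1,\rho_i^2\}$ in the abelian case and $\{1,\rho_i\}$ in the torus case.

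\emph{Assembly and conclusion.} For a genuinely semi-abelian $X_i$ with exact sequence $0\to T_i\to X_i\xrightarrow{\pi}B_i\to 0$: if $\pi(x_i)\in B_i$ is non-torsion, Lemma~\ref{ascent} applied to $\pi$ gives $\underline\alpha_{f_i}(x_i)\ge\alpha_{g_i}(\pi(x_i))=\rho_i^2$, which together with $\overline\alpha_{f_i}(x_i)\le\delta_{f_i}=\rho_i^2$ yields $\alpha_{f_i}(x_i)=\rho_i^2$; if $\pi(x_i)$ is torsion, replace $f_i$ by a power and translate so that $\pi(x_i)=0$, restrict to the fibre $\pi^{-1}(0)\cong T_i$ (where $f_i$ acts as a monomial map composed with a translation), and conclude $\alpha_{f_i}(x_i)\in\{1,\rho_i\}$ from the torus case via Lemma~\ref{comparing arithmetic deg}. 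Thus $A(f_i)=\{1\}\cup A_i$ exactly, so by Lemma~\ref{product case} $A(f)=\{1\}\cup A_1\cup\cdots\cup A_r$, a finite set of algebraic integers (the $\rho_i$ and $\rho_i^2$ are algebraic integers since $\End(X)$ is a finitely generated $\Z$-module); this settles assertions (1)--(3). For assertion (4): if $O_f(x)$ is Zariski dense in $X$, its image under each coordinate projection $X\to X_i$ (surjective) is a dense, hence infinite, orbit $O_{f_i}(x_i)$; choosing $i^*$ with $\delta_{f_{i^*}}=\delta_f$, the building-block computations give $\alpha_{f_{i^*}}(x_{i^*})=\delta_{f_{i^*}}=\delta_f$ (the relevant point is non-torsion because its orbit is infinite), so $\alpha_f(x)=\max_j\alpha_{f_j}(x_j)\ge\delta_f$, and with the upper bound $\alpha_f(x)=\delta_f$.

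\emph{Expected main obstacle.} The delicate point is not a single estimate but making this reduction rigorous in the non-projective setting: semi-abelian varieties have no canonical compactification, and naive compactifications are neither equivariant nor keep isogenies finite at the boundary, so the isogeny-splitting, the restriction to a torus fibre, and the transfer of the Néron--Tate lower bound up the exact sequence must all be routed through Lemmas~\ref{ascent} and~\ref{comparing arithmetic deg} and through carefully chosen compactifications (toric bundles over abelian varieties). A secondary difficulty, internal to the abelian case, is extracting the clean limit $\alpha_{g_i}=\rho_i^2$ from the interaction of the positive-definite form $\widehat h_H$ with the possibly non-semisimple action of $g_i$ — ruling out cancellation while absorbing the polynomial Jordan-block corrections into the $n$-th root.
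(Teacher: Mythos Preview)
Your overall architecture matches the paper's: reduce via the splitting lemma to factors $f_i$ with minimal polynomial a power of a single irreducible $F_i$, handle tori and abelian varieties separately, and glue via Lemmas~\ref{product case}, \ref{ascent}, \ref{comparing arithmetic deg}. But there is a genuine error in your abelian building block. You assert that because the minimal polynomial of $g_i$ is a power of an irreducible $F_i$, the linear action of $g_i$ on $B_i(K)_\R$ has ``all eigenvalues of one modulus $\rho_i$''. This is false: an irreducible polynomial over $\Q$ can have roots of different absolute values --- e.g.\ $F(t)=t^{2}-3t+1$, with roots $(3\pm\sqrt5)/2$ of moduli roughly $2.618$ and $0.382$. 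For $B=E\times E$ with $E$ a non-CM elliptic curve and $g$ acting by the integer matrix $\bigl(\begin{smallmatrix}3&1\\-1&0\end{smallmatrix}\bigr)$, the minimal polynomial is exactly this $F$, and on $B(K)_\R\cong E(K)_\R^{2}$ the eigenvalues are the two unequal real roots. Your lower bound $\|g_i^{\,n}x\|\ge C^{-1}n^{-d}\rho_i^{\,n}\|x\|$ therefore does not follow from the reason you give.

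The approach is salvageable, and once repaired it gives an alternative to the paper's inductive argument (Lemma~\ref{power of irr case}). The missing observation is this: since $F_i$ is irreducible over $\Q$, the minimal polynomial of $g_i$ on the cyclic $\Q[g_i]$-submodule generated by any nonzero $x\in B_i(K)_\Q$ must itself be a positive power of $F_i$; hence every root of $F_i$, in particular one of maximal modulus $\rho_i$, occurs as an eigenvalue on that submodule, so $x$ has nonzero component in the corresponding generalized eigenspace and $\|g_i^{\,n}x\|\ge c(x)\,n^{-d}\rho_i^{\,n}$ for some $c(x)>0$. This yields $\underline\alpha_{g_i}(x)\ge\rho_i^{2}=\delta_{g_i}$ as you want. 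The paper instead takes a nef $\R$-divisor eigenclass $D$ with $f^{*}D\equiv\delta_f D$ and uses the structure of the zero locus of its canonical-height quadratic form (from \cite{ks1}) to run an induction on $\dim B_i$; your direct Mordell--Weil argument is shorter once the irreducibility point is supplied, while the paper's route avoids fixing a number field and works uniformly with divisor classes. A smaller issue: your ``replace $f_i$ by a power and translate so that $\pi(x_i)=0$'' step is underspecified (translating $x_i$ does not commute with $f_i$, and passing to a power of $f_i$ does not move $\pi(x_i)$); the paper handles this cleanly via Lemma~\ref{multiple by n}, replacing $x$ by $nx\in T_i$ rather than modifying $f_i$.
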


\begin{rmk}
In \cite{ks3}, the conjecture is formulated for smooth projective varieties.
Of course, quasi--projective version of the conjecture is equivalent to the projective version.
\end{rmk}

\begin{rmk}
For any dominant rational map $f$, the arithmetic degrees are bounded by the dynamical degree:
\[
\overline{\alpha}_{f}(x)\leq \delta_{f}
\]
for every $x\in X_{f}(\QQ)$ \cite{ks3}, \cite[Theorem 1.4]{ma}.

\end{rmk}

\begin{rmk}
If $X$ is projective and $f$ is a morphism, then Conjecture \ref{ksc} (1)(2)(3) are true \cite[Theorem 3]{ks1}.
\end{rmk}

\begin{rmk}
Conjecture \ref{ksc} is verified in several cases.
We refer to \cite[Remark 1.8, Theorem 1.3]{mss} for details.
\end{rmk}

The following results are used later.

\begin{thm}\cite[Theorem 4]{ks1},\cite[Theorem 4, Corollary 32]{sil},\cite[Theorem 2]{sil2}\label{key known thm}
\ 
\begin{enumerate}
\item[\rm (1)] For any self-morphisms of abelian varieties,  Conjecture \ref{ksc} is true.
\item[\rm (2)] Let $X$ be an algebraic torus and  $f \colon X \longrightarrow X$ be a group homomorphism. 
Then Conjecture \ref{ksc} is true for $f$.
Moreover, let $F(t)$ be the minimal monic polynomial of $f$ as an element of $\End(X)_{\Q}$ and 
$F(t)=t^{e_{0}}F_{1}(t)^{e_{1}}\cdots F_{r}(t)^{e_{r}}$ the irreducible decomposition.
Then $A(f)=\{1, \rho(F_{1}),\dots, \rho(F_{r})\}$.
\end{enumerate}
\end{thm}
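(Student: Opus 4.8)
The plan is to follow \cite[Theorem 4]{ks1}, \cite[Theorem 4 and Corollary 32]{sil} and \cite[Theorem 2]{sil2}. In both parts the assertion is Conjecture \ref{ksc}, and parts (1)--(3) of that conjecture are the softer ones: for a morphism of a projective variety the existence and algebraicity of $\alpha_f(x)$ and the finiteness of $A(f)$ are already known by \cite[Theorem 3]{ks1}, while a monomial self-map of a torus, although a morphism on no toric compactification, will have its arithmetic degree computed directly below. So I would concentrate on part (4) --- a Zariski-dense orbit has $\alpha_f(x)=\delta_f$ --- and, in part (2), on the explicit description of $A(f)$; throughout I would use $\overline\alpha_f(x)\le\delta_f$ (\cite{ks3, ma}), so that only lower bounds for $\alpha_f(x)$ remain.

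For abelian varieties, I would first write $f=T_a\circ g$ with $g$ a homomorphism (rigidity). Since translations act trivially on $N^1(X)$ one has $f^*=g^*$ there, so $\delta_f=\delta_g=\rho\!\bigl(g^*\mid N^1(X)_\R\bigr)$; identifying $N^1(X)_\Q$ with the symmetric homomorphisms $X\to\widehat X$, on which $g^*$ acts by $\phi\mapsto\widehat g\,\phi\,g$, I would invoke the standard identity $\delta_g=\rho\!\bigl(g\mid V_\ell(X)\bigr)^2$. If $O_f(x)$ is Zariski dense then $f$, hence $g$, is surjective, so $g$ is an isogeny, and using the splitting of \S\ref{splitting} together with Lemmas \ref{comparing arithmetic deg} and \ref{product case} (the splitting isogeny is finite surjective, and $\delta$ and $A$ are unchanged by it and behave as a maximum on products) I would reduce to the case where the minimal polynomial of $g$ in $\End(X)_\Q$ is a single power $F^e$, $F$ irreducible and $F\ne t$, where $\delta_f=\rho(F)^2$ --- the minimal polynomial of $g$ on $V_\ell(X)$ being again $F^e$, as $V_\ell(X)$ is a faithful $\Q[g]\otimes\Q_\ell$-module. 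Then I would bring in N\'eron--Tate theory: for a symmetric ample $\theta$, $h_\theta=\widehat h_\theta+O(1)$ with $\widehat h_\theta$ a positive semidefinite quadratic form vanishing exactly on torsion. Restricting $\widehat h_\theta$ to the finite-dimensional subspace $\Gamma$ of $X(\QQ)\otimes\R$ generated by $\Z[g]x+\Z[g]a$, and letting $v_n$ be the image of $f^n(x)$, the orbit becomes the forward orbit $v_{n+1}=Gv_n+b$ of an affine-linear map; since $f(P)-f(Q)=g(P-Q)$ gives $v_{n+1}-v_n=G^n(v_1-v_0)$, an elementary computation yields $\alpha_f(x)=\max\{1,\rho(G\mid W)\}^2$, where $W\subseteq\Gamma$ is the $G$-cyclic subspace generated by $v_1-v_0$, the image of $f(x)-x$. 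Now $W$ is the base change of the cyclic $\Q[g]$-module $\Q[g]\cdot\overline{f(x)-x}$, which is nonzero as soon as $x$ is not preperiodic (then $f(x)-x$ is non-torsion); being a nonzero module over the local ring $\Q[t]/(F^e)$ its socle is a nonzero $\Q[t]/(F)$-vector space, so $F$ divides the characteristic polynomial of $g$ on $W$ and $\rho(G\mid W)=\rho(F)$. As $\rho(F)\ge1$ (the product of the roots of $F$ is a nonzero integer), this gives $\alpha_f(x)=\rho(F)^2=\delta_f$ whenever $x$ is not preperiodic; and since a dense orbit projects to non-preperiodic points in each factor of the splitting, part (4) follows, which, with parts (1)--(3) supplied by the projective-morphism case, settles Conjecture \ref{ksc} for $f$.

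For tori, $X\cong\mathbb G_m^n$ and $f$ corresponds to $M\in M_n(\Z)=\End(\mathbb G_m^n)$ with $\delta_f=\rho(M)$ (the dynamical degree of a monomial map being classical, cf.\ \cite{sil}). For $x=(x_1,\dots,x_n)\in(\QQ^\times)^n$ and a place $v$ I would set $\mathbf w_v=(\log|x_j|_v)_j\in\R^n$, so that $\log$ of the $i$-th coordinate of $f^k(x)$ at $v$ is $(M^k\mathbf w_v)_i$; using the product formula $\sum_v\mathbf w_v=0$ one checks that a toric Weil height satisfies $h(f^k(x))\asymp\max_v\|M^k\mathbf w_v\|$ with constants independent of $k$, hence $\alpha_f(x)=\max\{1,\max_v\rho_v\}$ exists with $\rho_v=\lim_k\|M^k\mathbf w_v\|^{1/k}=\rho\!\bigl(\mathrm{Ann}_{\C[t]}(\mathbf w_v)\bigr)$. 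The crux is to show $\mathrm{Ann}_{\C[t]}(\mathbf w_v)=\mathrm{Ann}_{\Q[t]}(\mathbf w_v)=:(d_v)$ for every $v$: this is automatic at finite places ($\mathbf w_v$ is a real multiple of an integer vector), and at archimedean places I would derive it from Baker's theorem on linear forms in logarithms applied to the algebraic numbers $x_j\overline{x_j}$ (which forces any $\QQ$-linear relation among the entries of $\mathbf w_v$ to be $\Q$-rational, whence $\mathrm{Ann}_{\QQ[t]}(\mathbf w_v)=\mathrm{Ann}_{\Q[t]}(\mathbf w_v)$). Since each $d_v$ divides the minimal polynomial $t^{e_0}F_1^{e_1}\cdots F_r^{e_r}$ of $M$, one gets $\rho_v=\rho(d_v)\in\{0,\rho(F_1),\dots,\rho(F_r)\}$ and hence $\alpha_f(x)\in\{1,\rho(F_1),\dots,\rho(F_r)\}$ (each $\rho(F_i)\ge1$, as $F_i\in\Z[t]$ is monic with $F_i(0)\ne0$), giving Conjecture \ref{ksc}(1)--(3) and the inclusion $A(f)\subseteq\{1,\rho(F_1),\dots,\rho(F_r)\}$. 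For the reverse inclusion I would, given $i$, take $w\in\Z^n$ in the $F_i$-primary subspace of $M$ and set $x=(p^{w_1},\dots,p^{w_n})$ for a rational prime $p$: only $v=p,\infty$ contribute and both vectors $\mathbf w_v$ lie on the rational line $\R w$, so $\alpha_f(x)=\rho(F_i)$. Finally, when $O_f(x)$ is Zariski dense, $x$ lies in no proper $M$-invariant rational subtorus coset, which forces $\mathrm{lcm}_v(d_v)$ to be the full minimal polynomial of $M$, so $\alpha_f(x)=\rho(M)=\delta_f$, which is Conjecture \ref{ksc}(4).

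I expect the one genuinely non-formal ingredient to be this archimedean analysis: ruling out that a logarithm vector $\mathbf w_v$ is accidentally confined to a proper \emph{non-rational} $M$-invariant subspace. Without the transcendence input (Baker's theorem, or Gelfond--Schneider in small rank) one would only obtain $A(f)\subseteq\{1\}\cup\{\,|\mu| : \mu\text{ a root of some }F_i\}$, strictly larger than the claimed set whenever some $F_i$ has roots of distinct absolute values. In the abelian case no such issue arises, because the relevant subspace $W$ is automatically defined over $\Q$ (it is generated by $f(x)-x\in X(\QQ)$); there the only mildly delicate points are the identity $\delta_g=\rho(g\mid V_\ell(X))^2$ and the isogeny reduction of \S\ref{splitting}, both routine.
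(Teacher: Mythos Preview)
The paper does not give its own proof of Theorem~\ref{key known thm}: the statement is quoted with attributions to \cite[Theorem 4]{ks1}, \cite[Theorem 4, Corollary 32]{sil} and \cite[Theorem 2]{sil2}, and is thereafter used as a black box (in Lemma~\ref{core case}, in the proof of Theorem~\ref{main}, and in the proof of Theorem~\ref{main2 semiab}). So there is no in-paper argument to compare your attempt against.

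Your sketch is a faithful outline of the arguments in those references. For abelian varieties, the reduction via the splitting of \S\ref{splitting} followed by N\'eron--Tate theory on the finitely generated group $\Z[g]x+\Z[g]a$ is exactly the route of \cite{ks1,sil2}; the identification $\delta_g=\rho(g\mid V_\ell X)^2$ is the content of Lemma~\ref{dyn deg of isog} here. For tori, the place-by-place analysis with $\mathbf w_v=(\log|x_j|_v)_j$ and the explicit realisation of each $\rho(F_i)$ via $x=(p^{w_1},\dots,p^{w_n})$ match \cite{sil} (and \cite{lin}).

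The one place your write-up is loose is the passage from Baker's theorem to $\mathrm{Ann}_{\C[t]}(\mathbf w_v)=\mathrm{Ann}_{\Q[t]}(\mathbf w_v)$ at an archimedean $v$. Baker, applied to the real algebraic numbers $|x_j|_v^{2}=x_j\overline{x_j}$, gives that any $\QQ$-linear relation among the $\log|x_j|_v$ is $\Q$-rational; writing $\mathbf w_v=\sum_m\ell_m\,\mathbf w_v^{(m)}$ with $\mathbf w_v^{(m)}\in\Q^n$ and the $\ell_m$ a $\Q$-basis of $\Q\text{-span}\{\log|x_j|_v\}$, this yields $\mathrm{Ann}_{\QQ[t]}(\mathbf w_v)=\bigcap_m\mathrm{Ann}_{\QQ[t]}(\mathbf w_v^{(m)})=\mathrm{Ann}_{\Q[t]}(\mathbf w_v)\cdot\QQ[t]$. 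Upgrading this from $\QQ$ to $\R$ (equivalently $\C$) is not automatic, and your sentence ``whence $\mathrm{Ann}_{\C[t]}(\mathbf w_v)=\mathrm{Ann}_{\Q[t]}(\mathbf w_v)$'' skips a genuine step. In the cited proofs this is handled either by arguing with the full family $(\mathbf w_v)_v$ simultaneously (after the splitting, a single nonzero finite-place vector already forces $\rho_v=\rho(F_i)$, reducing the archimedean problem to units and a regulator-type argument), or by a more careful use of Baker that controls the real cyclic subspace directly. Since the result is quoted in the present paper, you could simply cite it; if you want to reproduce the argument, this is the step that needs to be written out rather than asserted.
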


\section{Splitting lemma}\label{splitting}

In this section, the ground field is an algebraically closed field of characteristic zero.
Let $X$ be a a semi-abelian variety,
i.e. a commutative algebraic group that is an extension of an abelian variety by an algebraic torus.
Note that $X$ is divisible i.e. the morphism $X \longrightarrow X; x \mapsto nx$ is surjective for every $n>0$.

\begin{lem}\label{split lem 0}
Let $f \colon X \longrightarrow X$ be a group homomorphism.
Let $F(t) \in \Z[t]$ be a polynomial such that $F(f)=0$ in $\End(X)$.
Suppose $F(t)=F_{1}(t)F_{2}(t)$ where $F_{1}, F_{1} \in \Z[t]$ are coprime in $\Q[t]$.
Set $X_{1}=F_{2}(f)(X)$ and $X_{2}=F_{1}(f)(X)$.
Then $X=X_{1}+X_{2}$ and $X_{1}\cap X_{2}$ is finite.
In other words, the morphism $X_{1}\times X_{2} \longrightarrow X; (x_{1},x_{2}) \mapsto x_{1}+x_{2}$
is an isogeny. 
\end{lem}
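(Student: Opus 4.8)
The plan is to run the classical B\'ezout argument for commuting operators, transported to algebraic groups via divisibility. Since $F_1$ and $F_2$ are coprime in the principal ideal domain $\Q[t]$, there are $a,b\in\Q[t]$ with $a(t)F_1(t)+b(t)F_2(t)=1$; clearing denominators gives $A,B\in\Z[t]$ and an integer $N\geq 1$ with
\[
A(t)F_1(t)+B(t)F_2(t)=N .
\]
Before using this, I record two elementary facts that I will invoke repeatedly. First, $X_1=F_2(f)(X)$ and $X_2=F_1(f)(X)$ are closed algebraic subgroups of $X$ (images of homomorphisms of algebraic groups), and each is stable under $f$ — indeed $f(F_2(f)(x))=F_2(f)(f(x))\in X_1$, and similarly for $X_2$ — hence stable under $P(f)$ for every $P\in\Z[t]$. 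Second, since $X$ is a semi-abelian variety, $X$ is divisible and the $N$-torsion subgroup $X[N]$ is finite (it is squeezed by an exact sequence between the finite groups $T[N]$ and $A[N]$).

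For $X=X_1+X_2$: applying the B\'ezout relation at $f$, for any $x\in X$ we get
\[
Nx=A(f)\bigl(F_1(f)(x)\bigr)+B(f)\bigl(F_2(f)(x)\bigr),
\]
where the first summand lies in $X_2$ (apply $A(f)$, which stabilizes $X_2$) and the second lies in $X_1$. Since $X$ is divisible, $NX=X$, so every point of $X$ lies in $X_1+X_2$.

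For the finiteness of $X_1\cap X_2$: because $F_1(f)F_2(f)=F(f)=0$ on $X$, the operator $F_1(f)$ vanishes on $X_1=F_2(f)(X)$ and $F_2(f)$ vanishes on $X_2=F_1(f)(X)$; hence for $z\in X_1\cap X_2$ we have $Nz=A(f)F_1(f)(z)+B(f)F_2(f)(z)=0$, so $X_1\cap X_2\subset X[N]$, which is finite. Finally, the addition map $X_1\times X_2\longrightarrow X$ is a surjective homomorphism of algebraic groups whose kernel is $\{(z,-z)\mid z\in X_1\cap X_2\}$, a finite group; therefore it is an isogeny (and in particular $X_1,X_2$ are themselves semi-abelian). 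There is no serious obstacle here; the only points that require a little care are that the images $F_i(f)(X)$ are closed and $f$-stable, the passage from $\Q$-coefficients to $\Z$-coefficients in the B\'ezout identity, and the input — valid because $X$ is semi-abelian in characteristic zero — that $X$ is divisible with finite $N$-torsion.
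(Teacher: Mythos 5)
Your proof is correct. Note that the paper itself gives no argument here: it simply cites Silverman's Lemma~5 in \cite{sil2}, which is stated for abelian varieties, and the underlying argument there is precisely the B\'ezout identity you use. What your write-up adds is a self-contained verification that the same argument works verbatim in the semi-abelian setting: you correctly isolate the only inputs beyond formal commutation of polynomials in $f$, namely that $X$ is divisible (so $NX=X$, giving $X=X_1+X_2$) and that $X[N]$ is finite (squeezed between $T[N]$ and $A[N]$, giving finiteness of $X_1\cap X_2$ since $F_1(f)$ kills $X_1$ and $F_2(f)$ kills $X_2$ because $F_1F_2=F$ and $F(f)=0$). The remaining details — closedness and $f$-stability of the images $F_i(f)(X)$, clearing denominators in the B\'ezout relation, and identifying the kernel of the addition map $X_1\times X_2\to X$ with $\{(z,-z)\mid z\in X_1\cap X_2\}$ — are all handled correctly, so your argument could stand in place of the citation.
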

\begin{proof}
The proof of \cite[Lemma 3.1]{sil2} works for semi-abelian varieties.
\end{proof}
In the situation of Lemma \ref{split lem 0}, write $f_{i}=f|_{X_{i}}$.
Then $F_{i}(f_{i})=0$ and we have the following commutative diagram:

\[
\xymatrix{
X_{1}\times X_{2} \ar[r]^{f_{1}\times f_{2}} \ar[d]_{\pi} & X_{1}\times X_{2} \ar[d]^{\pi}\\
X \ar[r]_{f} & X.
}
\]
Here $\pi$ is the isogeny defined by $\pi (x_{1},x_{2})=x_{1}+x_{2}$.

Since $X$ is divisible, we have $\End(X) \subset \End(X) {\otimes}_{\Z}\Q$.
Let $f \in \End(X)$ and $F(t) \in \Z[t]$ be the monic minimal polynomial of $f$ as an element of $\End(X) {\otimes}_{\Z}\Q$.
(The monic minimal polynomial has integer coefficients because those of endomorphisms of a torus and an abelian variety
have integer coefficients.)
Let
\[
F(t)=F_{0}(t)^{e_{0}}F_{1}(t)^{e_{1}}\cdots F_{r}(t)^{e_{r}}
\]
be the decomposition into irreducible factors where $F_{0}(t)=t$, $e_{0}\geq0$, $e_{i}>0, i=1,\dots,r$
and $F_{i}(t)$ are distinct monic irreducible polynomials.
Note that $r$ is possibly zero.
Set
\[
X_{i}=F_{0}(f)^{e_{0}} \cdots F_{i-1}(f)^{e_{i-1}} F_{i+1}(f)^{e_{i+1}}\cdots F_{r}(f)^{e_{r}}(X)
\]
and $f_{i}=f|_{X_{i}}$.
Here, $X_{i}$ are also (semi-)abelian varieties since they are images of a (semi-)abelian variety.
Then we get the commutative diagram
\[
\xymatrix@C=46pt{
X_{0}\times \cdots \times X_{r} \ar[r]^{f_{0}\times \cdots \times f_{r}} \ar[d]_{\pi}&X_{0}\times \cdots \times X_{r} \ar[d]^{\pi}\\
X \ar[r]_{f}& X
}
\]
where $\pi(x_{0},\dots,x_{r})=x_{0}+\cdots+x_{r}$.
Note that the monic minimal polynomial of $f_{i}$ as an element of $\End(X_{i}) {\otimes}_{\Z}\Q$ is $F_{i}(t)^{e_{i}}$.
Note that $f$ is surjective if and only if $e_{0}=0$ and if this is the case, we have
$\delta_{f}=\delta_{f_{0}\times \cdots \times f_{r}}=\max\{ \delta_{f_{1}},\dots, \delta_{f_{r}}\}$ (cf.  Remark \ref{rmk on dyn deg}).

\section{Arithmetic and dynamical degrees of isogenies of abelian varieties}\label{sec:abelian varieties}

 \begin{thm}[Theorem \ref{main semiab}(3) for abelian varieties]\label{main}
 Let $X$ be an abelian variety and $f \colon X \longrightarrow X$ be a group homomorphism, both defined over $ \overline{\mathbb Q}$. 
 Let $F(t)$ be the monic minimal polynomial of $f$ as an element of $\End(X)_{\Q}$ and
\[
F(t)=t^{e_{0}}F_{1}(t)^{e_{1}}\cdots F_{r}(t)^{e_{r}}
\]
the irreducible decomposition in $\Q[t]$ where $e_{0}\geq0$ and  $e_{i}>0$ for $i=1,\dots, r$. 
Then we have
\[
A(f)= \{1, \rho(F_{1})^{2},\dots, \rho(F_{r})^{2} \}
\]
\end{thm}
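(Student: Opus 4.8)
The plan is to reduce to the already-known building blocks (Theorem \ref{key known thm}(1) for self-morphisms of abelian varieties, and the comparison lemmas of \S\ref{sec:notation and preliminaries}) by invoking the splitting machinery of \S\ref{splitting}. First I would apply the splitting construction to the homomorphism $f$: writing $F(t)=t^{e_{0}}F_{1}(t)^{e_{1}}\cdots F_{r}(t)^{e_{r}}$ for the monic minimal polynomial, the factors $t^{e_{0}},F_{1}^{e_{1}},\dots,F_{r}^{e_{r}}$ are pairwise coprime in $\Q[t]$, so Lemma \ref{split lem 0} (applied iteratively) gives subabelian varieties $X_{0},X_{1},\dots,X_{r}$ and an isogeny $\pi\colon X_{0}\times\cdots\times X_{r}\to X$ intertwining $f_{0}\times\cdots\times f_{r}$ with $f$, where $f_{i}=f|_{X_{i}}$ has monic minimal polynomial $F_{i}^{e_{i}}$ (and $F_{0}=t$, so $f_{0}$ is nilpotent, hence every point of $X_{0}$ is $f_{0}$-preperiodic and $A(f_{0})=\{1\}$).

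Next, I would use Lemma \ref{comparing arithmetic deg} to transfer arithmetic degrees across the isogeny $\pi$. Since $\pi$ is finite and surjective and $X_{0}\times\cdots\times X_{r}$, $X$ are projective (after the trivial observation that abelian varieties are projective), the lemma applies with $U=X$, giving $\alpha_{f}(\pi(y))=\alpha_{f_{0}\times\cdots\times f_{r}}(y)$ for every $y$; conversely, given $x\in X(\QQ)$, divisibility of $X_{0}\times\cdots\times X_{r}$ lets one choose a preimage $y$ with $\pi(y)=x$, so $A(f)=A(f_{0}\times\cdots\times f_{r})$. By Lemma \ref{product case} this product equals $\{1\}\cup A(f_{1})\cup\cdots\cup A(f_{r})$ (the $\{1\}$ coming from $A(f_{0})$, or simply from taking preperiodic points in the other coordinates). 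So it remains to compute $A(f_{i})$ for a homomorphism $f_{i}$ of an abelian variety $X_{i}$ whose minimal polynomial is a power $F_{i}(t)^{e_{i}}$ of a single irreducible polynomial.

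For this final local computation, the point is that $\rho(F_{i})$ is the only candidate eigenvalue-modulus in sight: the eigenvalues of $f_{i}^{*}$ on $N^{1}(X_{i})_{\R}$ are, up to the Rosati involution, products $\lambda\bar\mu$ of roots of $F_{i}$ (this is the standard description of $f^{*}$ on $N^{1}$ of an abelian variety in terms of the action on the tangent space, or equivalently on $\mathrm{NS}$), so $\delta_{f_{i}}=\rho(F_{i})^{2}$, and more generally $\overline{\alpha}_{f_{i}}(x)\le\delta_{f_{i}}=\rho(F_{i})^{2}$. On the other hand Theorem \ref{key known thm}(1) guarantees $\alpha_{f_{i}}(x)$ exists, is an algebraic integer, takes finitely many values, and equals $\delta_{f_{i}}=\rho(F_{i})^{2}$ on any Zariski-dense orbit — and such dense orbits exist (e.g.\ by \cite{mss} Theorem 1.6, a point with $\alpha_{f_{i}}(x)=\delta_{f_{i}}$; one checks its orbit is dense because $F_{i}$ irreducible forces $f_{i}$ to have no proper nontrivial $f_{i}$-invariant subvariety through which the height could be "trapped", or one argues directly that a subvariety argument would lower $\alpha$). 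Thus $\rho(F_{i})^{2}\in A(f_{i})$, and combined with $\overline{\alpha}_{f_{i}}(x)\le\rho(F_{i})^{2}$ together with $\alpha_{f_{i}}(x)=1$ for preperiodic points (which exist, e.g.\ torsion points, since $f_{i}$ is an isogeny onto its image and permutes the relevant torsion), one gets $A(f_{i})\subseteq\{1,\rho(F_{i})^{2}\}$ with both values attained, i.e.\ $A(f_{i})=\{1,\rho(F_{i})^{2}\}$. Assembling, $A(f)=\{1\}\cup\{\rho(F_{1})^{2}\}\cup\cdots\cup\{\rho(F_{r})^{2}\}=\{1,\rho(F_{1})^{2},\dots,\rho(F_{r})^{2}\}$.

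The main obstacle I expect is the very last attainment claim: showing that for an abelian-variety homomorphism with irreducible-power minimal polynomial there really is a point of arithmetic degree exactly $\rho(F_{i})^{2}$, i.e.\ that the extremal value is not merely an upper bound. The cleanest route is to combine \cite[Theorem 1.6]{mss} (existence of a point attaining $\delta_{f_{i}}$) with the known KSC for abelian varieties (Theorem \ref{key known thm}(1)) to conclude such a point has a Zariski-dense orbit — but one must be careful that $f_{i}$ is surjective (true, since $F_{i}\ne t$ implies $F_{i}(0)\ne0$, so $f_{i}$ is an isogeny of $X_{i}$) for these results to apply directly; this surjectivity is exactly what the $e_{0}\ge 0$/$e_{i}>0$ bookkeeping in the splitting lemma was arranged to give.
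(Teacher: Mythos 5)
Your reduction via the splitting lemma and Lemma \ref{product case} matches the paper, and transferring $A(f)$ across the isogeny $\pi$ (you via Lemma \ref{comparing arithmetic deg}, the paper via \cite[Lemma 6]{sil2}) is fine. The genuine gap is in the final step, where you assert $A(f_i)\subseteq\{1,\rho(F_i)^2\}$ from the upper bound $\overline{\alpha}_{f_i}(x)\le\delta_{f_i}=\rho(F_i)^2$, the value $1$ at preperiodic points, and KSC for abelian varieties (Theorem \ref{key known thm}(1)) on dense orbits. That does not rule out intermediate values: by \cite[Theorem 3]{ks1} the possible arithmetic degrees are $1$ or moduli of eigenvalues of $f_i^{*}$ on $N^{1}(X_i)_{\R}$, and those eigenvalues are products of two roots of (a polynomial with the same root set as) $F_i$; if $F_i$ has roots of different absolute values, moduli strictly between $1$ and $\rho(F_i)^2$ occur among the candidates, and a point with infinite but non-dense orbit could a priori realize one of them. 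Your parenthetical claim that irreducibility of $F_i$ forces $f_i$ to have no proper nontrivial invariant subvariety is also false (take $X_i=E\times E$ and $f_i=[2]$, with minimal polynomial $t-2$; then $E\times\{0\}$ is invariant), so the dichotomy ``dense orbit or preperiodic'' is not automatic.

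The paper closes exactly this gap with Lemma \ref{power of irr case}: for an isogeny whose minimal polynomial is $F^{e}$ with $F$ irreducible, $\alpha_{f}(x)<\delta_{f}$ forces $x$ to be a torsion point. Its proof is by induction on $\dim X$, using a nef eigendivisor $D$ with $f^{*}D\equiv\delta_{f}D$, the quadratic part $q$ of the associated canonical height, and \cite[Theorem 29, Lemma 31]{ks1}, which identify $\{q=0\}$ with $Y(\QQ)+X(\QQ)_{\rm tors}$ for a proper $f$-invariant abelian subvariety $Y$; irreducibility of $F$ guarantees $\delta_{f|_{Y}}=\delta_{f}$, so the induction applies. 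You need this dichotomy (or an equivalent argument, e.g.\ an orbit-closure structure theorem combined with KSC applied to the restriction) before you can conclude $A(f_i)=\{1,\rho(F_i)^2\}$; once you have it, the rest of your argument (attainment of $\delta_{f_i}$ via \cite[Theorem 1.6]{mss} or \cite[Corollary 32]{ks1}, and $\delta_{f_i}=\rho(F_i)^2$ as in Lemma \ref{dyn deg of isog}) goes through as in the paper.
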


\begin{thm}[Theorem \ref{main2 semiab} for isogenies of abelian varieties]\label{main2}
Let $X$ be an abelian variety and $f \colon X \longrightarrow X$ a surjective group homomorphism, both defined over $ \overline{\mathbb Q}$.
Suppose that the minimal polynomial of $f$ has no irreducible factor that is
a cyclotomic polynomial.
Then for any $x \in X(\QQ)$,
\[
\alpha_{f}(x)=1 \iff \#\ O_{f}(x)<\infty \iff x\in X( \overline{\mathbb Q})_{\rm tors}
\]
where $X( \overline{\mathbb Q})_{\rm tors}$ is the set of torsion points.
\end{thm}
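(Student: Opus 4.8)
The plan is to establish the two nontrivial implications by compressing the orbit into a finitely generated subgroup and doing linear algebra there against the Néron--Tate height. The chain $(3)\Rightarrow(2)\Rightarrow(1)$ is immediate: if $x$ has order $N$ then $f$, being a group homomorphism, preserves the finite group $X[N]$, so $O_{f}(x)\subset X[N]$ is finite; and a point with finite orbit has bounded height along its orbit, hence $\alpha_{f}(x)=1$. So the real content is $(1)\Rightarrow(3)$, which I will prove in contrapositive form: \emph{if $x\in X(\QQ)$ is not torsion, then $\alpha_{f}(x)>1$} (the limit defining $\alpha_{f}(x)$ exists by Theorem \ref{key known thm}(1)).

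Fix a non-torsion $x$. Let $P(t)\in\Z[t]$ be the monic minimal polynomial of $f$ in $\End(X)_{\Q}$, of degree $d$; since $P(f)=0$ on $X$, the point $f^{d}(x)$ is a $\Z$-linear combination of $x,f(x),\dots,f^{d-1}(x)$, so $M:=\sum_{n=0}^{d-1}\Z f^{n}(x)$ is a finitely generated, $f$-stable subgroup of $X(\QQ)$. Set $V:=(M/M_{\rm tors})\otimes_{\Z}\R$ and let $V_{\Q}:=(M/M_{\rm tors})\otimes_{\Z}\Q$. Being generated over $\Z[f]$ by the class $\bar x$, the $\Q[t]$-module $V_{\Q}$ is cyclic, so $V_{\Q}\cong\Q[t]/(P_{M}(t))$ with $\bar x\leftrightarrow 1$, where $P_{M}$ is the minimal polynomial of $f$ acting on $V_{\Q}$; clearly $P_{M}\mid P$. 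Since $x$ is non-torsion, $\bar x\neq 0$, so $\deg P_{M}\geq 1$. Since $f$ is an isogeny it is injective on $X(\QQ)\otimes\Q$ (if $f(Q)$ is torsion then a multiple of $Q$ lies in the finite group $\ker f$, so $Q$ is torsion), hence $t\nmid P_{M}$. Finally, by hypothesis no irreducible factor of $P$, a fortiori of $P_{M}$, is a cyclotomic polynomial.

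The decisive input is then a purely arithmetic statement about $P_{M}$: every monic irreducible factor $F$ of $P_{M}$ satisfies $\rho(F)>1$. Indeed $F(0)$ is a nonzero integer, so the product of the absolute values of the roots of $F$ is $\geq 1$; if $\rho(F)\leq 1$ then all roots of $F$ would have absolute value exactly $1$, whence by Kronecker's theorem $F$ would be a product of cyclotomic polynomials and, being irreducible, itself cyclotomic --- contradiction. Therefore $\rho(P_{M})=\max_{F}\rho(F)>1$. Now take an ample symmetric divisor $D$ on $X$ and its Néron--Tate height $\hat h_{D}$: it is a positive-definite quadratic form on $M/M_{\rm tors}$, so $q(v):=\hat h_{D}(v)$ extends to a Euclidean $\|\cdot\|^{2}$ on $V$, and $h_{D}(f^{n}(x))=\hat h_{D}(f^{n}(x))+O(1)=\|f^{n}\bar x\|^{2}+O(1)$. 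Since $\bar x$ is a cyclic vector for $f\vert_{V_{\Q}}$, the coordinate sequences of $f^{n}\bar x$ are linear recurrence sequences with characteristic polynomial $P_{M}$ in which the dominant root appears with nonzero coefficient, so by the standard description of such sequences $\limsup_{n}\|f^{n}\bar x\|^{1/n}=\rho(P_{M})$. Plugging this into the definition yields $\overline{\alpha}_{f}(x)\geq\rho(P_{M})^{2}>1$, and since $\alpha_{f}(x)=\overline{\alpha}_{f}(x)$ exists by Theorem \ref{key known thm}(1) we get $\alpha_{f}(x)>1$; equivalently, $\alpha_{f}(x)=1$ forces $x\in X(\QQ)_{\rm tors}$.

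The main obstacle --- and the only place where both hypotheses are essential --- is the step producing $\rho(F)>1$ for every irreducible factor $F$ of $P_{M}$: dropping ``$f$ is an isogeny'' allows $t\mid P_{M}$ (so $\bar x$ could sit in a contracting or null direction for the height), and dropping ``no cyclotomic factor'' allows a factor of $P_{M}$ with all roots on the unit circle, giving a bounded, hence $\alpha_{f}=1$, non-torsion orbit. The remaining point --- that a cyclic vector realizes the full spectral-radius growth rate --- is a routine fact about linear recurrence sequences, which I will only state. (Alternatively one could try to route through Theorem \ref{main} and the splitting of Remark \ref{rmk:split}, but that only pins down the \emph{set} $A(f)$, not which points attain $1$, so one would in the end be reduced to the same argument on each factor; the direct argument above is cleaner.)
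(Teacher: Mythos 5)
Your argument is correct, but it takes a genuinely different route from the paper's. The paper proves $(1)\Rightarrow(3)$ by first reducing, via the splitting lemma of \S\ref{splitting}, to the case where the minimal polynomial is $F(t)^{e}$ with $F$ irreducible and non-cyclotomic, and then applying Lemma \ref{power of irr case}, whose proof rests on the Kawaguchi--Silverman theory of nef canonical heights (\cite[Theorem 29, Lemma 31]{ks1}): the vanishing locus of the canonical height is $Y(\QQ)+X(\QQ)_{\rm tors}$ for an $f$-invariant abelian subvariety $Y\subsetneq X$, and one concludes by induction on $\dim X$. You instead work directly with the finitely generated $f$-stable subgroup $M=\Z[f]\cdot x$: the N\'eron--Tate height of a symmetric ample divisor turns $(M/M_{\rm tors})\otimes_{\Z}\R$ into a Euclidean space, the cyclic $\Q[t]$-module structure together with Kronecker's theorem (using both hypotheses: the isogeny assumption to exclude the factor $t$, the non-cyclotomic assumption to exclude unit-circle roots) forces $\rho(P_{M})>1$, and the cyclic-vector growth estimate gives $\overline{\alpha}_{f}(x)\geq\rho(P_{M})^{2}>1$. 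This avoids the splitting, the nef canonical heights of \cite{ks1} and the dimension induction, and it even yields the quantitative refinement $\alpha_{f}(x)\geq\rho(P_{M})^{2}$ for non-torsion $x$; the paper's route, on the other hand, is what generalizes directly to the semi-abelian statement of Theorem \ref{main2 semiab}. Two steps in your write-up should be stated more carefully: positivity of $\hat{h}_{D}$ on the nonzero elements of the lattice $M/M_{\rm tors}$ does not by itself imply positive definiteness of the extended form on the real span, so you should quote the standard theorem that the N\'eron--Tate form is positive definite on $X(K)\otimes_{\Z}\R$ for a number field $K$ over which the finitely many generators of $M$ are defined (see \cite[\S B.5]{hs}) and restrict it to your subspace; and the ``routine fact'' that a cyclic vector realizes the spectral radius deserves its one-line proof (on each factor $\C[t]/\bigl((t-\lambda)^{m}\bigr)$ of $V\otimes_{\R}\C$ the image of $\bar{x}$ is nonzero and multiplication by $t$ is invertible with inverse of spectral radius $|\lambda|^{-1}$, whence $\|f^{n}\bar{x}\|\geq c_{\epsilon}(|\lambda|-\epsilon)^{n}$ for every eigenvalue $\lambda$ of $P_{M}$).
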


\begin{lem}\label{dyn deg of isog}
Let $X$ be an abelian variety of dimension $g$ over an algebraically closed field of characteristic zero
and $f \colon X \longrightarrow X$ a surjective group homomorphism.
Let $P(t)$ be the monic minimal polynomial of $f$ as an element of $\End(X)_{\Q}$, which has integer coefficient, and
$\rho$ the maximum among the absolute values of the roots of $P(t)$.
Then we have $\delta_{f}=\rho^{2}$.
\end{lem}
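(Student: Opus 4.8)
The plan is to compute $\delta_f$ as the spectral radius of $f^*$ acting on $N^1(X)_{\R}$, using the classical description of $N^1(X)$ for an abelian variety in terms of symmetric line bundles and the Rosati involution. First I would recall that, since $f$ is an isogeny (hence a surjective morphism), $\delta_f = \rho(f^* \colon N^1(X)_{\R} \to N^1(X)_{\R})$ by Remark \ref{rmk on dyn deg}(1). Next I would use the standard identification: fixing a polarization, $\NS(X)_{\Q}$ sits inside $\End(X)_{\Q}$ as the subspace of elements fixed by the Rosati involution $\dagger$ (via $L \mapsto \phi_{L_0}^{-1}\circ \phi_L$ for a fixed polarization $L_0$), and under this identification the pullback $f^*$ on $\NS(X)_{\Q}$ corresponds to the map $\psi \mapsto f^{\dagger}\circ \psi \circ f$ on the $\dagger$-fixed part of $\End(X)_{\Q}$. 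Actually it is cleaner to work on all of $\End(X)_{\Q}$: the operator $\Phi \colon \psi \mapsto \hat f \circ \psi \circ f$ (where $\hat f$ is the dual isogeny, so that $f^{\dagger} = \phi_{L_0}^{-1}\hat f \phi_{L_0}$) has spectral radius equal to $\rho(f)^2$, because its eigenvalues are products $\bar\mu\,\nu$ of eigenvalues of $f$ acting on the tangent space (equivalently on $H^1$), and $|\bar\mu\,\nu|\le \rho^2$ with equality attained by taking $\mu=\nu$ of maximal absolute value.

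The key steps, in order: (i) reduce $\delta_f$ to the spectral radius of $f^*$ on $N^1(X)_{\R} = \NS(X)_{\R}$; (ii) identify $\NS(X)_{\Q}$ with the $\dagger$-symmetric elements of $\End(X)_{\Q}$ and $f^*$ with conjugation-type operator $\psi\mapsto f^{\dagger}\psi f$; (iii) bound the spectral radius of this operator on the symmetric part by that of $\Phi(\psi) = \hat f\,\psi\, f$ on all of $\End^0(X) = \End(X)_{\Q}$, and identify the eigenvalues of $\Phi$ with products of a root of the characteristic polynomial of $f$ on homology and a conjugate root, giving $\rho(\Phi) = \rho^2$; (iv) obtain the reverse inequality $\rho(f^*) \ge \rho^2$ by exhibiting a symmetric eigenvector: if $\mu$ is an eigenvalue of $f$ on $H^{1,0}$ of absolute value $\rho$, the corresponding rank-one Hermitian form on the tangent space (built from the eigen-covector and its conjugate) is $\dagger$-symmetric and is sent by $f^*$ to $|\mu|^2$ times itself, so $\rho(f^*)\ge \rho^2$. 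Here one should note that the minimal polynomial $P(t)$ of $f$ in $\End^0(X)$ and the characteristic polynomial of $f$ on $H_1(X,\Q)$ (or on a Tate module) have the same roots, so $\rho$ as defined via $P$ equals the maximum modulus of an eigenvalue of $f$ on homology.

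The main obstacle I expect is step (iv): making the lower bound $\rho(f^*)\ge\rho^2$ rigorous over an arbitrary algebraically closed field of characteristic zero rather than over $\C$. Over $\C$ one can argue transcendentally with $H^{1,1}$ and Hermitian forms as above; in general one should instead argue algebraically, e.g. by a Lefschetz-principle reduction to $\C$ (the statement is a statement about a finite amount of algebraic data — $X$, $f$, and the numerical action — which descends to a finitely generated field embeddable in $\C$), or alternatively by showing directly that the operator $\psi\mapsto f^{\dagger}\psi f$ on the symmetric subspace has spectral radius exactly $\rho^2$ using positivity of the Rosati involution (the trace form $\psi\mapsto \Tr(\psi^{\dagger}\psi)$ is positive definite, and $f^{\dagger}\psi f$ is self-adjoint for this form, so its spectral radius is computed by the Rayleigh quotient $\Tr(\psi^{\dagger} f^{\dagger}\psi f)/\Tr(\psi^{\dagger}\psi) = \Tr((\psi f)^{\dagger}(\psi f))/\Tr(\psi^{\dagger}\psi)$, which one estimates by $\rho^2$). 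I would present the Lefschetz-principle route as the clean option since the upper bound from step (iii) is purely algebraic and only the lower bound needs the transcendental input.
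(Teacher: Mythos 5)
Your skeleton (Lefschetz reduction to $\C$, then identifying the eigenvalues of the pullback with products of two eigenvalues of $f$ on $H^{1}$, with $\rho^{2}$ attained because the root set is stable under complex conjugation) overlaps with the paper's proof, which works with $H^{2}(X;\Z)\simeq \Hom_{\Z}(\bigwedge^{2}\Lambda,\Z)$ and the factorization $Q(t)\overline{Q(t)}$ of the characteristic polynomial of the rational representation. But your lower-bound step (iv) has a genuine gap. The rank-one Hermitian form $\ell\otimes\bar{\ell}$ attached to an eigen-covector of the analytic representation is a class in $H^{1,1}(X,\R)$, but in general it does \emph{not} lie in $N^{1}(X)_{\R}=\NS(X)_{\R}$: the space of all Hermitian forms has real dimension $g^{2}$, while $\NS(X)$ can have rank as small as $1$, and only forms whose imaginary part lies in the real span of integral alternating forms belong to $\NS(X)_{\R}$. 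Consequently the form is not ``$\dagger$-symmetric'' in your sense (it corresponds to no Rosati-fixed element of $\End(X)_{\R}$), and it does not witness an eigenvalue of $f^{*}$ on $N^{1}(X)_{\R}$, which is the space through which $\delta_{f}$ is defined. What your eigenvector gives is $\rho\bigl(f^{*}|_{H^{1,1}}\bigr)\geq \rho^{2}$; to conclude you must add the comparison between the spectral radius of $f^{*}$ on $N^{1}(X)_{\R}$ and on $H^{1,1}(X)$ --- precisely the Dinh--Sibony input the paper invokes (cf. Remark \ref{rmk on dyn deg}(1) and the citation of \cite{ds} in the paper's proof) --- or argue directly inside $N^{1}$, e.g. by the trace estimate $((f^{n})^{*}H_{0}\cdot H_{0}^{\,g-1})\geq c\,\rho^{2n}$ for a polarization $H_{0}$ (the left side is, up to constants, the Hilbert--Schmidt norm of $f_{a}^{n}$ with respect to $H_{0}$), which bounds $\delta_{f}$ from below straight from the definition.

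Your algebraic fallback for the lower bound is also flawed as stated: with respect to the positive-definite trace form $\langle\psi,\chi\rangle=\Tr(\psi^{\dagger}\chi)$, the adjoint of $\psi\mapsto f^{\dagger}\psi f$ is $\chi\mapsto f\chi f^{\dagger}$, so the operator is not self-adjoint unless $f=f^{\dagger}$; moreover $\Tr(\psi^{\dagger}f^{\dagger}\psi f)\neq \Tr((\psi f)^{\dagger}(\psi f))$ for general symmetric $\psi$, and estimating such a Rayleigh quotient by $\rho^{2}$ would require an operator-norm (not spectral-radius) bound on right multiplication by $f$, which fails for non-normal $f$. By contrast, your upper bound in step (iii) is sound: left multiplication by $f^{\dagger}$ and right multiplication by $f$ on $\End(X)\otimes_{\Z}\C$ commute and have eigenvalues among the roots of the minimal polynomials of $f^{\dagger}$ and $f$ (both with spectral radius $\rho$, since $\dagger$ is an anti-automorphism fixing $\Q$), and $\NS(X)_{\R}$ is an invariant subspace; so only the lower bound needs repair, and with either of the fixes above the argument goes through.
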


\begin{rmk}
The minimal polynomial of $f$ as an element of $\End(X)_{\Q}$
is equal to the minimal polynomial of $T_{l}(f)$ for every prime number $l$.
If the ground field is $\C$, these are also equal to the minimal polynomial of the analytic representation of $f$.
\end{rmk}

\begin{proof}
By the Lefschetz principle, we may assume that the ground field is $\C$.
Let $X=\C^{g}/\Lambda$, where $\Lambda$ is a lattice in $\C^{g}$.
Let $f_{r} \colon \Lambda \longrightarrow \Lambda$ be the rational representation 
and $f_{a} \colon \C^{g} \longrightarrow \C^{g}$ the analytic representation
of $f$.

We have a natural isomorphism $H^{r}(X;\Z) \simeq \Hom_{\Z}(\bigwedge^{r}\Lambda,\Z)$ (cf.  \cite[\S 1 (4)]{mum}).
If we identify $H^{r}(X;\Z)$ with $\Hom_{\Z}(\bigwedge^{r}\Lambda,\Z)$ by this isomorphism,
then $f^{*} \colon H^{r}(X;\Z) \longrightarrow H^{r}(X;\Z)$ is $(\bigwedge^{r}f_{r})^{*}$.
Therefore, the eigenvalues of $f^{*}$ are products of $r$ eigenvalues of $f_{r}$.
Since $f_{a}|_{\Lambda}=f_{r}$, the characteristic polynomial of $f_{r}$ as an $\R$-linear map is $Q(t)\overline{Q(t)}$ where $Q(t)$ is the
characteristic polynomial of $f_{a}$ as a $\C$-linear map. 
(Take a basis $e_{1},\dots, e_{g}$ of $\C^{g}$ so that $f_{a}$ is represented by an upper triangular matrix.
Then compute the characteristic polynomial of $f_{a}, f_{r}$ using bases $\{e_{1} ,\dots, e_{g}\}, \{e_{1}, ie_{1},\dots,e_{g}, ie_{g}\}$ respectively.)
Note that the set of roots of $P(t)$ and $Q(t)$ are the same.
Therefore, the spectral radius of $f^{*} \colon H^{2}(X;\Z) {\otimes}_{\Z}\R \longrightarrow H^{2}(X; \Z) {\otimes}_{\Z}\R$ is equal to
the square of spectral radius of $f_{r}$.
Note that the spectral radius of $f^{*}\curvearrowright H^{2}(X; \Z)$
is equal to the spectral radius of $f^{*}\curvearrowright H^{1,1}(X)$ (cf.  the inequality above Proposition 4.4 in \cite{ds}),
this proves the theorem.
\end{proof}

Now,  let $X$ be an abelian variety and $f \colon X \longrightarrow X$ a group homomorphism, both defined over $ \overline{\mathbb Q}$. 
Let $F(t)$ be the monic minimal polynomial of $f$ and
\[
F(t)=t^{e_{0}}F_{1}(t)^{e_{1}}\cdots F_{r}(t)^{e_{r}}
\]
the decomposition into irreducible factors in $\Q[t]$. 
Here $F_{i}$ are distinct monic irreducible polynomial in $\Z[t]$ with $F_i(0)\neq0$.
Write $F_{0}(t)=t$.
Set
\[
X_{i}=F_{0}(f)^{e_{0}} \cdots F_{i-1}(f)^{e_{i-1}} F_{i+1}(f)^{e_{i+1}}\cdots F_{r}(f)^{e_{r}}(X).
\]
Then by \S3, we have the following commutative diagram:
\[
\xymatrix@C=46pt{
X_{0}\times \cdots \times X_{r} \ar[r]^{f_{0}\times \cdots \times f_{r}} \ar[d]_{\pi} &X_{0}\times \cdots \times X_{r} \ar[d]^{\pi}\\
X \ar[r]_{f}& X.
}
\]
Here, the vertical arrows are isogenies.
Note that the minimal polynomial of $f_{i}$ is $F_{i}(t)^{e_{i}}$.

\begin{lem}\label{power of irr case}
Let $f \colon X \longrightarrow X$ be a surjective group homomorphism over $  \overline{\mathbb Q}$ such that the minimal polynomial of $f$
is the form of $F(t)^{e}$ where $F$ is an irreducible monic polynomial in $\Z[t]$.
For any $x\in X(\QQ)$, if $ \alpha_{f}(x)<\delta_{f}$, then $x$ is a torsion point.
In particular, $x$ is an $f$-preperiodic point
and $ \alpha_{f}(x)=1$.
\end{lem}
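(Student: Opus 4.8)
The plan is to reduce the problem to a canonical-height-style argument using the eigenvalue structure of $f^{*}$ on the Néron–Severi space, together with the theory of heights on abelian varieties. First I would record that, by Lemma \ref{dyn deg of isog}, $\delta_{f}=\rho(F)^{2}$, where $\rho=\rho(F)$ is the largest absolute value of a root of $F$. Since $F$ is irreducible, every root of $F$ has the same absolute value as a consequence of the fact that the minimal polynomial of an isogeny is a product of ``$q$-Weil-type'' factors — more precisely, I would invoke that the roots of the minimal polynomial of an isogeny of an abelian variety come in pairs $\lambda,\bar\lambda$ with $\lambda\bar\lambda$ an algebraic integer all of whose conjugates have the same absolute value (Weil), so the Galois orbit of a single root $\lambda$ has constant absolute value $\rho$. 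The key consequence is that the eigenvalues of $f^{*}\colon N^{1}(X)_{\R}\to N^{1}(X)_{\R}$ all have absolute value exactly $\delta_{f}=\rho^{2}$: indeed, for a symmetric ample class $H$, $f^{*}H$ is computed through the Rosati involution, and the eigenvalues of $f^{*}$ on $N^{1}$ are of the form $\lambda_{i}\overline{\lambda_{j}}$ where $\lambda_i$ runs over the roots of the analytic characteristic polynomial; when the minimal polynomial is a power of a single irreducible $F$, all these products have absolute value $\rho^{2}$.

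Next I would use the Kawaguchi–Silverman machinery for morphisms (Theorem \ref{key known thm}(1), or rather the underlying canonical height construction of \cite{ks1}): since $f^{*}$ acts on $N^{1}(X)_{\R}$ with all eigenvalues of the same absolute value $\delta_{f}$, one can build a canonical height $\widehat{h}_{f}$ associated to $f$ and a suitable ample class, with $\widehat{h}_{f}\circ f=\delta_{f}\cdot\widehat{h}_{f}$ and $\widehat{h}_{f}=h_{H}+O(1)$ along orbits — but here I would instead exploit directly that on an abelian variety every Weil height attached to a symmetric ample divisor is, up to $O(1)$, a positive-definite quadratic form $\widehat{h}$ (the Néron–Tate height), and $\widehat{h}(f(x))$ is governed by the self-intersection $f^{*}H$. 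Concretely, $\widehat{h}_{f^{*}H}(x)=\widehat{h}_{H}(f(x))$, and iterating, $\widehat{h}_{H}(f^{n}(x))=\widehat{h}_{(f^{n})^{*}H}(x)$. The point is that $x$ is a torsion point iff $\widehat{h}_{H}(x)=0$ iff the class of $x$ in $X(\QQ)\otimes\R$ is zero, and the linear-algebra fact that all eigenvalues of $f^{*}$ have absolute value $\delta_{f}$ forces $\widehat{h}_{H}(f^{n}(x))\asymp \delta_{f}^{\,n}$ for any non-torsion $x$, whence $\alpha_{f}(x)=\delta_{f}$.

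More precisely, the heart of the argument: let $V=X(\QQ)\otimes_{\Z}\R$ and let $q=\widehat{h}_{H}$ extend to a positive-definite quadratic form on $V$; the isogeny $f$ induces $f_{*}\colon V\to V$, and $q(f_{*}^{n}v)=\widehat{h}_{(f^{n})^{*}H}(x)$ for $x$ representing $v$. The adjoint $f^{\vee}$ of $f$ with respect to the polarization satisfies $f^{\vee}f=[\deg']$-type relations, and the operator norm growth of $f_{*}^{n}$ on $V$ is dictated by the spectral radius of $f_{*}$, which equals $\rho$ (the analytic spectral radius), while the smallest singular value grows like $\rho^{n}$ as well because $F$ irreducible means $f_{*}$ has all eigenvalues of absolute value $\rho$ on $V_{\C}$ and the same for $(f_{*})^{-1}$ scaled appropriately — here I use that $V$ is a module over $\Q[f]=\Q[t]/(F(t)^{e})$ and decomposing via the primary structure shows $\|f_{*}^{n}v\|^{1/n}\to\rho$ for every $v\neq0$. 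Hence $q(f_{*}^{n}v)^{1/2n}\to\rho$, so $\widehat{h}_{H}(f^{n}(x))^{1/n}\to\rho^{2}=\delta_{f}$, giving $\alpha_{f}(x)=\delta_{f}$ for non-torsion $x$; contrapositively, $\alpha_{f}(x)<\delta_{f}$ forces $v=0$, i.e.\ $x$ is torsion, and then $O_{f}(x)$ is finite and $\alpha_{f}(x)=1$.

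The main obstacle I anticipate is the lower bound $\|f_{*}^{n}v\|^{1/n}\to\rho$ for \emph{every} nonzero $v$ (not merely $\limsup$): one must rule out the possibility that some orbit direction decays, which is exactly where irreducibility of $F$ (so that $V$ has no $f_{*}$-stable subspace on which a smaller eigenvalue could appear, all Jordan blocks sharing the single absolute value $\rho$) is essential. A clean way to package this is to note that $\mathbb{Q}[t]/(F(t)^{e})$ is local Artinian with residue field $\mathbb{Q}[t]/(F)$, so $V$ is a successive extension of $\mathbb{Q}[t]/(F)$-modules; on each graded piece $f_{*}$ acts with all eigenvalues of absolute value $\rho$, and a standard estimate for the growth of $\|(\text{unipotent}\times\text{scalar }\rho)^{n}v\|$ yields the claim with only polynomial-in-$n$ corrections, which disappear after taking $n$-th roots.
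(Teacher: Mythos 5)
Your overall strategy (reduce to growth of the N\'eron--Tate quadratic form along the orbit of the class of $x$ in the Mordell--Weil space, using the $\Q[t]/(F(t)^{e})$-module structure) is genuinely different from the paper's proof, which argues by induction on $\dim X$ via Kawaguchi--Silverman's nef canonical heights: take a nef class $D$ with $f^{*}D\equiv\delta_{f}D$, use their description of the zero locus of the quadratic part as $Y(\QQ)+X(\QQ)_{\rm tors}$ for an $f$-invariant abelian subvariety $Y\subsetneq X$, observe that the minimal polynomial of $f|_{Y}$ still divides $F(t)^{e}$ so $\delta_{f|_{Y}}=\delta_{f}$, and descend to $Y$. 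However, your argument as written has a genuine gap: the load-bearing claim that, because $F$ is irreducible, all of its roots (equivalently, all eigenvalues of $f_{*}$ on $V_{\C}$, ``all Jordan blocks'') share the absolute value $\rho$ is false in characteristic zero. The appeal to Weil numbers concerns Frobenius endomorphisms over finite fields and has no analogue here. Concretely, take an abelian surface with real multiplication by $\Z[\sqrt{2}]$ and $f=1+\sqrt{2}$: its minimal polynomial $t^{2}-2t-1$ is irreducible with roots $1+\sqrt{2}$ and $1-\sqrt{2}$ of different moduli; the same example refutes your parenthetical claim that all eigenvalues of $f^{*}$ on $N^{1}(X)_{\R}$ have modulus $\delta_{f}$ (here $f^{*}$ has the eigenvalue $(1+\sqrt{2})(1-\sqrt{2})=-1$). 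Consequently your ``unipotent times scalar $\rho$'' estimate does not deliver the crucial lower bound $\liminf_{n}\|f_{*}^{n}v\|^{1/n}\ge\rho$ for every nonzero $v$: a priori $v$ could be supported on generalized eigenspaces of smaller modulus, which is exactly the scenario you must exclude.

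The gap is fixable, but by a different mechanism than the one you state: for $0\neq v\in X(K)\otimes_{\Z}\Q$ (work over a number field $K$ of definition of $x$ and $f$, so that the N\'eron--Tate form is positive definite on the finite-dimensional space $X(K)\otimes_{\Z}\R$ and comparable to a norm), the submodule $\Q[t]\cdot v$ is isomorphic to $\Q[t]/(F(t)^{j})$ for some $j\ge1$; after tensoring with $\C$ it has nonzero generalized-eigencomponents at \emph{every} root of $F$, in particular at a root of modulus $\rho$, since the projections onto the eigenfactors are given by idempotents lying in $\C[t]\cdot v$. It is this rationality (Galois-orbit) argument, not equality of the moduli of the roots, that forces $\|f_{*}^{n}v\|^{1/n}\to\rho$, hence $\hat{h}_{H}(f^{n}(x))^{1/n}\to\rho^{2}=\delta_{f}$ for non-torsion $x$ (when $\delta_{f}>1$; if $\delta_{f}=1$ the hypothesis $\alpha_{f}(x)<\delta_{f}$ is vacuous). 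With that repair your proof becomes a valid, more self-contained alternative to the paper's induction; as submitted, the decisive step rests on a false statement.
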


\begin{rmk}
Note that $ \alpha_{f}(x)<\delta_{f}$ happens only if $\delta_{f}>1$.
In the above situation, $\delta_{f}=1$ if and only if $F(t)$ is a cyclotomic polynomial.
This follows from Lemma \ref{dyn deg of isog} and the fact that if the absolute value of every root of an irreducible monic polynomial
with integer coefficients is less than or equal to one, then the polynomial is cyclotomic.
\end{rmk}

\begin{proof}
We prove the claim by induction on $\dim X$.
If $\dim X =0$, there is nothing to prove.
Suppose $\dim X=d>0$ and the claim holds for dimension $<d$.
Take a nef $\R$-divisor $D$ such that $f^{*}D\equiv \delta_{f}D$.
Let $q$ be the quadratic part of the canonical height of $D$,
i.e. $q(x)=\lim_{n\to \infty} h_{D}(nx)/n^{2}$.
By \cite[Theorem 29, Lemma 31]{ks1}, there exists an $f$-invariant subabelian variety $Y \subsetneq X$ such that
\[
\{x\in X( \overline{\mathbb Q}) \mid q(x)=0\}=Y( \overline{\mathbb Q}) + X( \overline{\mathbb Q})_{\rm tors}.
\]
Assume $ \alpha_{f}(x)<\delta_{f}$.
Then $x=y+z$ for some $y\in Y(\QQ)$ and some torsion point $z$.
It is enough to show that $y$ is a torsion point.
If $Y$ is a point, we are done.
Suppose $\dim Y>0$.
Since $Y$ is $f$-invariant, the minimal polynomial of $f|_{Y}$ divides $F(t)^{e}$ and is not equal to $1$.
Thus $\delta_{f|_{Y}}=\delta_{f}> \alpha_{f}(x)= \alpha_{f}(y)= \alpha_{f|_{Y}}(y)$.
Here, we use the fact that $ \alpha_{f}(x)= \alpha_{f}(y+z)= \alpha_{f}(y)$.
This follows from the definition of arithmetic degree and the fact that
the Neron-Tate height associated with a symmetric ample divisor is invariant under the translation by a torsion point.
By the induction hypothesis, $y$ is a torsion point.
\end{proof}

\begin{proof}[Proof of Theorem \ref{main}]

We use the notation of \S3.
Set $f_{i}=f|_{X_{i}}$.
By  \cite[Lemma 6]{sil2}, $A(f)=A(f_{0}\times \cdots \times f_{r})$.
Since $ \alpha_{f_{i}}(0)=1$ and $ \alpha_{f_{0}\times \cdots \times f_{r}}(x_{0},\dots ,x_{r})=\max\{ \alpha_{f_{0}}(x_{0}), \dots, \alpha_{f_{r}}(x_{r})\}$
(see Lemma \ref{product case}), we have $A(f_{0}\times \cdots \times f_{r})=A(f_{0})\cup \cdots \cup A(f_{r})$.
Note that $A(f_{0})=\{1\}$ since $f_{0}^{e_{0}}=0$. 
By Lemma \ref{power of irr case} and the fact that there always exists a point whose arithmetic degree equals the dynamical degree (cf.  \cite[Corollary 32]{ks1} or \cite[Theorem 1.6]{mss}),
we have $A(f_{i})=\{1, \delta_{f_{i}}\}$ for $i=1,\dots, r$.
Thus $A(f)=\{1, \delta_{f_{1}}, \dots, \delta_{f_{r}}\}$.
By Lemma \ref{dyn deg of isog}, $\delta_{f_{i}}$ is equal to $\rho(F_{i})^{2}$.
\end{proof}

\begin{proof}[Proof of Theorem \ref{main2}]
By \S3, we may assume the minimal polynomial of $f$ is the form of 
$F(t)^{e}$ where $F$ is an irreducible polynomial that is not cyclotomic.
Then  $\rho(F)$ is greater than one.
Thus $\delta_{f}>1$.
By Lemma \ref{power of irr case}, if $ \alpha_{f}(x)=1$ then $x$ is a torsion point.
\end{proof}

\section{Arithmetic and dynamical degrees of self-morphisms of semi-abelian varieties}

\subsection{Dynamical degrees}\label{subsec:semi-abelian dyn deg}

In this subsection, the ground field is an algebraically closed field of characteristic zero.

\begin{prop}\label{dyn deg of isog of semiab}
Let $X$ be a semi-abelian variety.
Let $f \colon X \longrightarrow X$ be a surjective group homomorphism.
Let 
\[
\xymatrix{
0 \ar[r] & T \ar[r] & X \ar[r]^{\pi} & A \ar[r] &0
}
\]
 be an exact sequence where $T$ is a torus and $A$ is an abelian variety.
 Then $f$ induces surjective group homomorphisms
 \begin{align*}
 f_{T}:=f|_{T}  & \colon T \longrightarrow T\\
 g & \colon  A \longrightarrow A
 \end{align*}
 with $g\circ \pi=\pi \circ f$.
 Then we have
 \[
 \delta_{f}=\max\{\delta_{g}, \delta_{f_{T}}\}
 \]
\end{prop}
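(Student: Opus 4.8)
The plan is to work in an \emph{equivariant} compactification of $X$ for which $\pi$ extends to a morphism, and then to read $\delta_{f}$ off a block structure on $N^{1}$. Fix a smooth projective toric variety $\overline{T}$ with dense open torus $T$, and form the toric bundle $\overline{X}=X\times^{T}\overline{T}$ over $A$ (the contracted product for the $T$-action). Then $\overline{X}$ is a smooth projective variety, $X\subset\overline{X}$ is open dense, $\pi$ extends to a morphism $\overline{\pi}\colon\overline{X}\to A$ whose fibres are the compactified $T$-cosets (each isomorphic to $\overline{T}$), and $f$ extends to a rational self-map $\overline{f}\colon\overline{X}\dashrightarrow\overline{X}$ with $\overline{\pi}\circ\overline{f}=g\circ\overline{\pi}$. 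Write the boundary as $\overline{X}\setminus X=D_{1}\cup\cdots\cup D_{s}$, a union of prime divisors indexed by the rays of the fan of $\overline{T}$. Since $\overline{f}|_{X}=f\colon X\to X$ is a surjective morphism, $(\overline{f}^{n})^{*}D_{i}$ is an effective divisor with no component meeting $X$, so $(\overline{f}^{n})^{*}D_{i}=\sum_{j}c^{(n)}_{ij}D_{j}$ with $c^{(n)}_{ij}\in\Z_{\ge0}$; restricting to a general fibre of $\overline{\pi}$ and using that $\overline{f}^{n}$ carries the fibre over $a$ onto the fibre over $g^{n}(a)$ via a translate of $f_{T}^{n}$ (translations fix the $T$-invariant divisors $D_{i}$), one identifies the $c^{(n)}_{ij}$ with the multiplicities of $(f_{T}^{n})^{*}D_{i}$ computed on $\overline{T}$.

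The upper bound $\delta_{f}\le\max\{\delta_{g},\delta_{f_{T}}\}$ is then a direct estimate of $(\overline{f}^{n})^{*}H\cdot H^{\dim X-1}$. Choose an ample class $H=\overline{\pi}^{*}H_{A}+B$ on $\overline{X}$ with $H_{A}$ ample on $A$ and $B=\sum_{i}b_{i}D_{i}$ restricting to an ample class on each fibre $\overline{T}$ (possible after taking $H_{A}$ ample enough). Because $g\circ\overline{\pi}$ is a morphism, the projection formula gives $(\overline{f}^{n})^{*}\overline{\pi}^{*}H_{A}=\overline{\pi}^{*}(g^{*})^{n}H_{A}$, and intersecting with $H^{\dim X-1}$ yields something that grows at most like $\mathrm{poly}(n)\,\delta_{g}^{n}$, since $g$ is a morphism of an abelian variety and hence $(g^{n})^{*}=(g^{*})^{n}$ has spectral radius $\delta_{g}$ on $N^{1}(A)_{\R}$. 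For the complementary part, $(\overline{f}^{n})^{*}B=\sum_{j}\bigl(\sum_{i}b_{i}c^{(n)}_{ij}\bigr)D_{j}$, and intersecting the effective divisor $(f_{T}^{n})^{*}D_{i}$ on $\overline{T}$ with a power of an ample class $H_{\overline{T}}$, together with $\delta_{f_{T}}=\lim_{n}\bigl((f_{T}^{n})^{*}H_{\overline{T}}\cdot H_{\overline{T}}^{\dim T-1}\bigr)^{1/n}$ and the positivity of each $D_{j}\cdot H_{\overline{T}}^{\dim T-1}$, bounds every $c^{(n)}_{ij}$ by $\mathrm{poly}(n)\,\delta_{f_{T}}^{n}$. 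Adding the two contributions gives $(\overline{f}^{n})^{*}H\cdot H^{\dim X-1}\le\mathrm{poly}(n)\,\max\{\delta_{g},\delta_{f_{T}}\}^{n}$, hence $\delta_{f}\le\max\{\delta_{g},\delta_{f_{T}}\}$.

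The reverse inequality follows from two standard facts. First, $\overline{\pi}\colon X\to A$ is a surjective morphism with $g\circ\overline{\pi}=\overline{\pi}\circ f$ and $\overline{\pi}^{*}\colon N^{1}(A)_{\R}\to N^{1}(\overline{X})_{\R}$ is injective, so $\overline{\pi}^{*}N^{1}(A)_{\R}$ is an $\overline{f}^{*}$-invariant subspace on which $\overline{f}^{*}$ acts as $g^{*}$; taking norms of iterates gives $\delta_{f}\ge\delta_{g}$ (monotonicity of the first dynamical degree under an equivariant dominant map). Second, $T\subset X$ is $f$-invariant with $f|_{T}=f_{T}$ surjective, and the first dynamical degree does not increase under restriction to an invariant subvariety; applied to $\overline{T}\subset\overline{X}$ this gives $\delta_{f_{T}}=\delta_{\overline{f}|_{\overline{T}}}\le\delta_{f}$. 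Combined with the upper bound, this proves $\delta_{f}=\max\{\delta_{g},\delta_{f_{T}}\}$. Alternatively, both inequalities follow simultaneously from the product formula for dynamical degrees along the fibration $\overline{\pi}$, which gives $\delta_{f}=\max\{\delta_{g},\delta_{1}(f\,|\,\overline{\pi})\}$, once one identifies the relative dynamical degree $\delta_{1}(f\,|\,\overline{\pi})$ with the dynamical degree of the general fibre map, namely $\delta_{f_{T}}$ (see \cite{tru0}).

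I expect the main obstacle to be the bookkeeping around the compactification: verifying that $\overline{X}=X\times^{T}\overline{T}$ is smooth projective with the stated fibration, that an ample class of the indicated shape exists, and that $(\overline{f}^{n})^{*}D_{i}$ is boundary-supported with multiplicities governed purely by the toric map $f_{T}$ on $\overline{T}$. A related subtlety is that $\overline{f}$ is only a rational map; but its indeterminacy locus is disjoint from $X$ (it lies over $\overline{X}\setminus X$), so after passing to a resolution that is an isomorphism over $X$ it does not affect any of the intersection numbers above. The remaining steps are routine intersection theory and unwinding of the definitions.
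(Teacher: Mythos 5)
Your overall architecture is sound and is in fact close in spirit to the paper's: compactify the torus fibres, use the fibration $\overline{\pi}\colon\overline{X}\to A$ to see that the action on divisors is ``block-triangular'' with blocks governed by $g$ and $f_{T}$, and conclude $\delta_{f}=\max\{\delta_{g},\delta_{f_{T}}\}$. (The paper trivializes the bundle birationally by a rational section, works on $A\times\overline{T}$ with $\overline{T}=(\P^{1})^{\dim T}$, computes the full pullback matrix on $CH^{1}(A)\oplus CH^{1}(\overline{T})$ in Claim \ref{pull-back matrix}, and then applies \cite[Theorem 15]{ks3}; your alternative closing remark via the product formula is exactly what Proposition \ref{dyn deg of isog of semiab}'s preceding remark cites from \cite{dn}, which the paper deliberately avoids to keep the argument algebraic, and even with \cite{tru0} one still has to prove the identification of the relative degree with $\delta_{f_{T}}$, i.e.\ the analogue of \cite[Remark 3.4]{dn}.) However, two steps you treat as standard are not, and one of them is a genuine gap. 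First, your lower bound $\delta_{f_{T}}\le\delta_{f}$ is justified by ``the first dynamical degree does not increase under restriction to an invariant subvariety.'' That is a theorem for \emph{morphisms}, but $\overline{f}$ is only a rational map on $\overline{X}$, and for rational maps no such general monotonicity can be quoted (restrictions to invariant subvarieties sitting inside exceptional or indeterminacy-related loci are notoriously badly behaved). The statement is repairable here precisely because the fibre $\overline{\pi}^{-1}(0)\cong\overline{T}$ meets the open part $X$, hence is not contained in $I_{\overline{f}^{n}}$: applying the inequality of Lemma \ref{pull-backs} to $\overline{T}\hookrightarrow\overline{X}\dashrightarrow\overline{X}$ gives $(f_{T}^{n})^{*}(H|_{\overline{T}})\le \bigl((\overline{f}^{n})^{*}H\bigr)\big|_{\overline{T}}$, and intersecting with $(H|_{\overline{T}})^{\dim T-1}$ yields $\delta_{f_{T}}\le\delta_{f}$; but some such argument must be supplied, not a citation.

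Second, the exact identification ``$c^{(n)}_{ij}$ equals the multiplicity of $D_{j}$ in $(f_{T}^{n})^{*}D_{i}$, seen by restricting to a general fibre'' is not automatic: $(\overline{f}^{n})^{*}D_{i}$ is defined by pulling back to a resolution and pushing forward, which discards $\mu$-exceptional components, and restriction of this cycle to a fibre need not commute on the nose with forming the fibre-wise pullback, uniformly in $n$. This is exactly the content that the paper spends Lemma \ref{pull-backs} and the two-sided general-position estimates of Claim \ref{pull-back matrix} on (restricting to general fibres $\{a\}\times\overline{T}$ \emph{and} to sections $A\times\{b\}$ to pin the class down from both sides). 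For your upper bound you only need the one-sided estimate $c^{(n)}_{ij}\le \mathrm{poly}(n)\,\delta_{f_{T}}^{n}$, which can indeed be extracted by such an argument (restrict to a general fibre avoiding the images of the discarded exceptional components, use that translations preserve the $T$-invariant boundary, and bound $(f_{T}^{n})^{*}D_{i}\cdot H_{\overline{T}}^{\dim T-1}$ by $\|(f_{T}^{n})^{*}\|$), but as written this is the heart of the proof rather than bookkeeping, and your proposal does not carry it out. With these two points filled in, your route does give the proposition, and its main difference from the paper is that you work on the equivariant compactification $X\times^{T}\overline{T}$ of \cite{vo} and estimate intersection numbers directly, whereas the paper first trivializes birationally by a rational section and then reads off the spectral radius of the triangular pullback matrix via \cite[Theorem 15]{ks3}.
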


\begin{rmk}
This follows from the product formula of dynamical degrees (\cite[Theorem 1.1]{dn}) and \cite[Remark 3.4]{dn}. 
To apply \cite[Remark 3.4]{dn}, just take the standard compactification of $X$ as in \cite[\S 2 (2.3)]{vo}.
The proofs of \cite[Theorem 1.1]{dn} and \cite[Remark 3.4]{dn} are based on analytic methods, 
so we give an algebraic proof of Proposition \ref{dyn deg of isog of semiab} below.
\end{rmk}

\begin{lem}\label{pull-backs}
Let $\xymatrix{X \ar@{-->}[r]^{f}  & Y \ar@{-->}[r]^{g} & Z}$ be rational maps of smooth projective varieties.
Suppose $f(X\setminus I_{f}) \not\subset I_{g}$ where $I_{f}, I_{g}$ are the indeterminacy loci of $f, g$.
Then for any free divisor $H$ on $Z$, we have
\[
(g\circ f)^{*}H \leq f^{*}(g^{*}H).
\]
Here, for divisor classes $A$ and $B$, $A\leq B$ means $B-A$ is represented by an effective divisor.
\end{lem}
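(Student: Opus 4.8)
The plan is to reduce the inequality to a computation on a single smooth projective model $W$ over $X$ on which both $f$ and $g\circ f$ become morphisms and on which $f$ lifts to a suitably chosen resolution of $g$. First I would resolve the rational map $g$ by a birational morphism $\rho\colon Y'\to Y$ with $Y'$ smooth projective such that $g':=g\circ\rho\colon Y'\to Z$ is a morphism and $\rho$ is an isomorphism over $Y\setminus I_g$; the latter can always be arranged (for instance by resolving the closure of the graph of $g$), and it forces $\rho(F)\subseteq I_g$ for every $\rho$-exceptional prime divisor $F$. Since $f(X\setminus I_f)\not\subseteq I_g$, the rational map $\rho^{-1}\circ f\colon X\dashrightarrow Y'$ is defined on a dense open set, so I may choose a birational morphism $\pi\colon W\to X$ with $W$ smooth projective resolving it, yielding a morphism $\phi\colon W\to Y'$ with $\rho\circ\phi=f\circ\pi$. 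Then $f\circ\pi=\rho\circ\phi$ is a morphism, and $(g\circ f)\circ\pi=g'\circ\phi$ is a morphism (again using $f(X\setminus I_f)\not\subseteq I_g$, so that $g\circ f$ is a genuine rational map), so by the definition of the pull-back $f^{*}=\pi_{*}\phi^{*}\rho^{*}$, $(g\circ f)^{*}=\pi_{*}\phi^{*}(g')^{*}$ and $g^{*}=\rho_{*}(g')^{*}$.

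Feeding this in, $f^{*}(g^{*}H)-(g\circ f)^{*}H=\pi_{*}\phi^{*}\left(\rho^{*}\rho_{*}(g')^{*}H-(g')^{*}H\right)$. Here I would use that $H$ is free: then $(g')^{*}H$ is globally generated, so the base-point-free linear system $|(g')^{*}H|$ has a member $\tilde D'$ whose support contains none of the finitely many $\rho$-exceptional prime divisors. For such a $\tilde D'$ one has the standard identity $\rho^{*}\rho_{*}\tilde D'=\tilde D'+E$ with $E\geq 0$ supported on $\Exc(\rho)$: the non-exceptional part of the effective divisor $\rho^{*}\rho_{*}\tilde D'$ is exactly the strict transform $\tilde D'$, and the remaining exceptional part $E$ is then effective. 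Hence $f^{*}(g^{*}H)-(g\circ f)^{*}H=\pi_{*}\phi^{*}E$.

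It remains to show $\pi_{*}\phi^{*}E\geq 0$; since $\pi_{*}$ sends effective divisor classes to effective divisor classes, it suffices that $\phi^{*}E$ be effective. Write $E=\sum_i a_iF_i$ with $a_i>0$ and the $F_i$ $\rho$-exceptional prime divisors. If $\phi(W)\subseteq F_i$ for some $i$, then the closure of the image of $f$, which equals $\rho(\phi(W))$, would be contained in $\rho(F_i)\subseteq I_g$, contradicting $f(X\setminus I_f)\not\subseteq I_g$. Hence $\phi(W)\not\subseteq F_i$ for all $i$, so each $\phi^{*}F_i$ is represented by the effective Cartier divisor $\phi^{-1}(F_i)$, the class $\phi^{*}E$ is effective, and we conclude $(g\circ f)^{*}H\leq f^{*}(g^{*}H)$. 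The only delicate points are the two ``pull-back of an effective divisor is effective'' steps: choosing $\tilde D'$ to miss $\Exc(\rho)$, which is where freeness of $H$ enters, and checking that $\phi$ does not contract $W$ into a $\rho$-exceptional divisor, which is precisely where the hypothesis $f(X\setminus I_f)\not\subseteq I_g$ is used; the rest is bookkeeping of pull-backs through a common resolution.
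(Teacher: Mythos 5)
Your proof is correct and follows essentially the same route as the paper: resolve $g$ by a model isomorphic over $Y\setminus I_{g}$, lift $f$ to a common resolution, reduce the difference $f^{*}(g^{*}H)-(g\circ f)^{*}H$ to the push-pull of an effective $\rho$-exceptional divisor (using freeness of $H$), and use the hypothesis $f(X\setminus I_{f})\not\subset I_{g}$ to ensure the lifted image is not contained in the exceptional locus. You merely spell out the details (choice of a member of the free linear system avoiding the exceptional divisors, and the image argument) that the paper leaves implicit.
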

\begin{proof}
Take resolutions $\pi_{X}, \pi_{Y}$ as follows:
\[
\xymatrix{
\widetilde{X} \ar[dd]_{\pi_{X}} \ar[rd]^{ \widetilde{f}} & & \\
& \widetilde{Y} \ar[d]_{\pi_{Y}} \ar[rd]^{ \widetilde{g}} &\\
X \ar@{-->}[r]_{f} & Y \ar@{-->}[r]_{g} & Z
}
\]
where $ \widetilde{X}, \widetilde{Y}$ are smooth projective varieties
and $\pi_{Y} \colon \pi_{Y}^{-1}(Y\setminus I_{g}) \simeq Y\setminus I_{g}$.
Then
\begin{align*}
(g\circ f)^{*}H & = {\pi_{X}}_{*} \widetilde{f}^{*} \widetilde{g}^{*}H\\
f^{*}(g^{*}H) & =  {\pi_{X}}_{*} \widetilde{f}^{*} \pi_{Y}^{*} {\pi_{Y}}_{*} \widetilde{g}^{*}H.
\end{align*}
Since $ \widetilde{g}^{*}H$ is free, the divisor $\pi_{Y}^{*} {\pi_{Y}}_{*} \widetilde{g}^{*}H-\widetilde{g}^{*}H$
is represented by an effective divisor with support contained in the exceptional locus $\Exc(\pi_{Y})$ of $\pi_{Y}$. 
Since $ \widetilde{f}( \widetilde{X}) \not\subset \Exc(\pi_{Y})$, we have 
${\pi_{X}}_{*} \widetilde{f}^{*} (\pi_{Y}^{*} {\pi_{Y}}_{*} \widetilde{g}^{*}H-\widetilde{g}^{*}H)\geq 0$.
\end{proof}

\begin{proof}[Proof of Proposition \ref{dyn deg of isog of semiab}]\ 
We will write the multiplication of the groups $X, A, T$ by addition.
Take a non-empty open subset $U \subset A$ and a section $s \colon U \longrightarrow \pi^{-1}(U)$ of $\pi$.
(There exists such a section because of the structure theorem of semi-abelian varieties \cite[Lemma 2.2]{vo}.)
Then
\[
\xymatrix@C=36pt@R=5pt{
\pi^{-1}(U) \ar[r]^{\sim} & U \times T\\
P  \ar@{}[u]|{\rotatebox[origin=c]{90}{$\in$}} \ar@{|->}[r] & (\pi(P), P-s(\pi(P))) \ar@{}[u]|{\rotatebox[origin=c]{90}{$\in$}}.
}
\]
By this isomorphism, $f$ is conjugate to the rational map
\[
\xymatrix@C=36pt@R=5pt{
A\times T \ar@{-->}[r]^{ \widetilde{f}} & A \times T\\
(x,y)  \ar@{}[u]|{\rotatebox[origin=c]{90}{$\in$}} \ar@{|->}[r] & (g(x), f_{T}(y)+h(x)) \ar@{}[u]|{\rotatebox[origin=c]{90}{$\in$}}
}
\]
where $h(x)=f(s(x))-s(g(x))$.
Note that $h$ is defined on $V:=U\cap g^{-1}(U)$ and $h(V)\subset T$.
Fix a compactification $T \subset \overline{T}=\P^{1}\times \cdots \times \P^{1}$.
The rational map $A\times \overline{T} \dashrightarrow A \times \overline{T}$ defined by $ \widetilde{f}$ is 
also denoted by $ \widetilde{f}$.

\begin{claim}\label{pull-back mult}
Let $m \colon \overline{T}\times \overline{T} \dashrightarrow \overline{T}$ be the rational map
defined by the multiplication morphism $T \times T \longrightarrow T$.
Then, for any divisor $D$ on $ \overline{T}$, $m^{*}D\sim\pr_{1}^{*}D+\pr_{2}^{*}D$. 
\end{claim}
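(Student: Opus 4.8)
The plan is to reduce the statement to a single--factor computation on $\P^1\times\P^1$, using that $\Pic(\overline T)$ is freely generated by the pullbacks of points under the coordinate projections. Recall $\overline T=(\P^1)^n$ with $n=\dim T$, that $T\cong(\mathbb{G}_m)^n$, and that $m=m_1\times\cdots\times m_n$ acts coordinate--wise, where $m_i\colon\P^1\times\P^1\dashrightarrow\P^1$ is the extension of the multiplication on $\mathbb{G}_m$. Let $q_i\colon\overline T\to\P^1$ be the $i$-th projection and $H_i=q_i^{*}[\mathrm{pt}]$, so $\Pic(\overline T)=\bigoplus_{i=1}^{n}\Z H_i$. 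Since both sides of the asserted identity are additive in $D$ and compatible with linear equivalence, it suffices to prove $m^{*}H_i\sim\pr_1^{*}H_i+\pr_2^{*}H_i$ for each $i$.

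Next I would reduce to $\P^1\times\P^1$. Writing $p_i\colon\overline T\times\overline T\to\P^1\times\P^1$ for the projection onto the $i$-th pair of coordinates, one has $q_i\circ m=m_i\circ p_i$ and $q_i\circ\pr_j=(\text{$j$-th projection of }\P^1\times\P^1)\circ p_i$ for $j=1,2$. Because $q_i$, $p_i$ and the coordinate projections are morphisms --- indeed smooth --- pullbacks compose along these, and one obtains $m^{*}H_i=p_i^{*}(m_i^{*}[\mathrm{pt}])$ and $\pr_1^{*}H_i+\pr_2^{*}H_i=p_i^{*}(H_1'+H_2')$, where $H_1',H_2'$ denote the two rulings of $\P^1\times\P^1$. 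Hence it is enough to show $m_i^{*}[\mathrm{pt}]\sim H_1'+H_2'$, i.e.\ the one--variable case.

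For the one--variable case, in bihomogeneous coordinates the map $m_1\colon([s_1:t_1],[s_2:t_2])\mapsto[s_1s_2:t_1t_2]$ is defined by the two--dimensional linear subsystem $\langle s_1s_2,\ t_1t_2\rangle$ of $|\mathcal{O}_{\P^1\times\P^1}(1,1)|$, whose base scheme consists of the two reduced points $([0:1],[1:0])$ and $([1:0],[0:1])$, of codimension two. Blowing these up gives a resolution $\pi\colon W\to\P^1\times\P^1$ with $\widetilde m_1\colon W\to\P^1$ a morphism, and $\widetilde m_1^{*}[\mathrm{pt}]\sim\pi^{*}(H_1'+H_2')-E$ with $E$ effective and $\pi$-exceptional. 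Pushing forward and using $\pi_{*}E=0$ (since $\Exc(\pi)$ lies over a codimension--two locus) yields $m_1^{*}[\mathrm{pt}]=\pi_{*}\widetilde m_1^{*}[\mathrm{pt}]\sim H_1'+H_2'$. Alternatively one may pull back the point $1\in\mathbb{G}_m$: its closure in $\P^1\times\P^1$ is the smooth $(1,1)$-curve $\{s_1s_2=t_1t_2\}$, which avoids the base points of $m_1$, hence coincides with $m_1^{*}[1]$ and has class $H_1'+H_2'$.

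The one genuine subtlety --- and the step I would be careful to spell out --- is the functoriality of pullback under composition: for general rational maps $(g\circ f)^{*}\neq f^{*}g^{*}$, which is precisely what Lemma~\ref{pull-backs} records. In our reduction this causes no difficulty because in each composition that arises the map being precomposed (namely $p_i$ or a coordinate projection) is a smooth projection, so flat base change makes the pullbacks strictly functorial along it; once that reduction is in place the remaining assertion is the elementary surface computation above.
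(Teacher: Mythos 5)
Your main argument is correct, but it takes a genuinely different route from the paper's. The paper never reduces to $\P^1\times\P^1$: it writes $m^{*}D\sim\pr_{1}^{*}D_{1}+\pr_{2}^{*}D_{2}$ using $CH^{1}(\overline{T}\times\overline{T})=\pr_{1}^{*}CH^{1}(\overline{T})\oplus\pr_{2}^{*}CH^{1}(\overline{T})$, and then pins down $D_{1},D_{2}$ by restricting along the sections $t\mapsto(t,1)$ and $t\mapsto(1,t)$, whose images miss $I_{m}$; since $m\circ i=\id_{\overline{T}}$ this gives $D_{1}\sim D$ and $D_{2}\sim D$ with no resolution and no coordinates, in any dimension at once. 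Your route --- split $m$ coordinate-wise, identify the pencil $\langle s_{1}s_{2},\,t_{1}t_{2}\rangle\subset|\mathcal{O}_{\P^{1}\times\P^{1}}(1,1)|$, blow up its two reduced base points, and push forward --- is also valid and yields an explicit description of $I_{m}$ and of the resolved map, at the cost of the bookkeeping you rightly single out: pre-composition with the smooth projections $p_{i}$ commutes with the rational-map pullback only via flat base change against the birational pushforward in the definition $m_{i}^{*}=\pi_{*}\widetilde{m}_{1}^{*}$, while post-composition with the morphisms $q_{i}$ is harmless. So the paper's proof is shorter and dimension-free; yours is more computational but more explicit.

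One caveat about your closing ``alternative'': the closure of $m_{1}^{-1}(1)$ is indeed the $(1,1)$-curve $\{s_{1}s_{2}=t_{1}t_{2}\}$, but it does \emph{not} avoid the base points --- at $([0:1],[1:0])$ one has $s_{1}=t_{2}=0$, so both $s_{1}s_{2}$ and $t_{1}t_{2}$ vanish there, and likewise at $([1:0],[0:1])$; in fact every member of the pencil contains the base locus. As stated, that shortcut therefore does not identify the closure with $m_{1}^{*}[1]$; to justify it you must pass through the blow-up anyway (the proper transform of the curve has class $\pi^{*}(H_{1}'+H_{2}')-E_{1}-E_{2}$ and pushes forward to $H_{1}'+H_{2}'$). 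Since your primary blow-up computation already carries the proof, this only invalidates the aside, not the result. It is worth noting that the paper's restriction trick works precisely because it restricts to a section $(t,1)$, which genuinely avoids $I_{m}$, rather than to a fiber of $m$, which never does.
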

\begin{claimproof}
We can write $m^{*}D\sim\pr_{1}^{*}D_{1}+\pr_{2}^{*}D_{2}$ where $D_{1}, D_{2}$ are divisors on $ \overline{T}$.
Let $1\in T$ be the neutral element.
Let $i \colon \overline{T} \longrightarrow \overline{T}\times \overline{T}$ be the map defined by
$i(t)=(t,1)$.
Then, $i( \overline{T})\cap I_{m}= \emptyset$.
Therefore, we have $D_{1}\sim i^{*}m^{*}D=(m\circ i)^{*}D=D$.
In the same way, we can show that $D_{2}\sim D$.
\end{claimproof}

Since $ \overline{T}$ is a product of $\P^{1}$, we have 
$\CH^{1}(A \times \overline{T})=\pr_{1}^{*}\CH^{1}(A)\oplus \pr_{2}^{*}\CH^{1}( \overline{T})\simeq 
\CH^{1}(A)\oplus \CH^{1}( \overline{T})$.

\begin{claim}\label{pull-back matrix}
We have
\[
\widetilde{f}^{*}= 
\begin{pmatrix}
g^{*}& h^{*}\\
0&f_{T}^{*}
\end{pmatrix}
\colon \CH^{1}(A)\oplus \CH^{1}( \overline{T}) \longrightarrow \CH^{1}(A)\oplus \CH^{1}( \overline{T}).
\]
\end{claim}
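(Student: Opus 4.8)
The plan is to determine the two columns of $\widetilde{f}^{*}$ separately, using only the (strict) functoriality of pull-back along \emph{morphisms} together with the direct sum decomposition $CH^{1}(A\times\overline{T})=\pr_{1}^{*}CH^{1}(A)\oplus\pr_{2}^{*}CH^{1}(\overline{T})$ recorded just above. I will use the two standard functoriality facts: for a rational map $q$ and a morphism $p$, $(p\circ q)^{*}=q^{*}\circ p^{*}$ holds unconditionally, and $(p\circ q)^{*}=q^{*}\circ p^{*}$ also holds when $q$ is a morphism and $p$ a rational map, provided $q$ does not map its source into $I_{p}$. For the first column this is immediate: the identity $\pr_{1}\circ\widetilde{f}=g\circ\pr_{1}$ of rational maps gives, for $C\in CH^{1}(A)$, $\widetilde{f}^{*}\pr_{1}^{*}C=(\pr_{1}\circ\widetilde{f})^{*}C=(g\circ\pr_{1})^{*}C=\pr_{1}^{*}g^{*}C$, so the first column is $(g^{*},0)^{\mathsf{T}}$.

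For the second column, fix $D\in CH^{1}(\overline{T})$ and write $\widetilde{f}^{*}\pr_{2}^{*}D=\pr_{1}^{*}E+\pr_{2}^{*}D'$ with $E\in CH^{1}(A)$, $D'\in CH^{1}(\overline{T})$; the claim amounts to $E=h^{*}D$ and $D'=f_{T}^{*}D$. To isolate $E$ I restrict along $\iota_{1}\colon A\to A\times\overline{T}$, $x\mapsto(x,1)$, where $1\in T$ is the identity. One checks $A\times\{1\}\not\subset I_{\widetilde{f}}$: for $x$ in the domain $V$ of $h$, the point $(x,1)$ lies in the domain of $\widetilde{f}$, since $1\notin I_{f_{T}}$ and $(f_{T}(1),h(x))=(1,h(x))\in T\times T$ avoids $I_{m}$. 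Hence $\iota_{1}^{*}\widetilde{f}^{*}\pr_{2}^{*}D=(\widetilde{f}\circ\iota_{1})^{*}\pr_{2}^{*}D$; since $f_{T}(1)=1$, the composite $\widetilde{f}\circ\iota_{1}$ is the rational map $x\mapsto(g(x),h(x))$, so composing once more with $\pr_{2}$ gives $\iota_{1}^{*}\widetilde{f}^{*}\pr_{2}^{*}D=h^{*}D$, while $\iota_{1}^{*}(\pr_{1}^{*}E+\pr_{2}^{*}D')=E$ because $\pr_{1}\circ\iota_{1}=\id_{A}$ and $\pr_{2}\circ\iota_{1}$ is constant; therefore $E=h^{*}D$. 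Symmetrically, for any $x_{0}\in V$, restricting along $\iota_{2}\colon\overline{T}\to A\times\overline{T}$, $y\mapsto(x_{0},y)$ (again $\{x_{0}\}\times\overline{T}\not\subset I_{\widetilde{f}}$ by the same check), we get $\widetilde{f}\circ\iota_{2}\colon y\mapsto(g(x_{0}),\,f_{T}(y)+h(x_{0}))=(g(x_{0}),\,(T_{h(x_{0})}\circ f_{T})(y))$, hence $\iota_{2}^{*}\widetilde{f}^{*}\pr_{2}^{*}D=(T_{h(x_{0})}\circ f_{T})^{*}D=f_{T}^{*}(T_{h(x_{0})}^{*}D)$, whereas $\iota_{2}^{*}(\pr_{1}^{*}E+\pr_{2}^{*}D')=D'$.

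The last ingredient is that a torus translation acts trivially on $CH^{1}(\overline{T})$: for $t\in T$, $T_{t}$ extends to an automorphism of $\overline{T}=(\P^{1})^{\dim T}$ that preserves every divisor ``the $i$-th coordinate equals a fixed point'', hence fixes the generators of $CH^{1}(\overline{T})=\bigoplus_{i}\Z\cdot\pr_{i}^{*}(\mathrm{pt})$. So $T_{h(x_{0})}^{*}D=D$, giving $D'=f_{T}^{*}D$, and combined with the first column this is exactly the asserted matrix. The conceptual crux is precisely to sidestep the failure of multiplicativity of pull-back along rational maps (only the inequality of Lemma \ref{pull-backs} is available): restricting along the two families of slices reduces every pull-back to a composition with an honest morphism, where functoriality is exact, and the only genuine bookkeeping is the verification that $A\times\{1\}$ and $\{x_{0}\}\times\overline{T}$ escape $I_{\widetilde{f}}$, which is clear because $I_{\widetilde{f}}$ is controlled by $I_{f_{T}}$ and $I_{m}$, both contained in the boundary $\overline{T}\setminus T$. (Alternatively, one could first derive $\widetilde{f}^{*}\pr_{2}^{*}D\le\pr_{1}^{*}h^{*}D+\pr_{2}^{*}f_{T}^{*}D$ for free $D$ from Lemma \ref{pull-backs} applied to $\widetilde{f}$ written as $m\circ(f_{T}\circ\pr_{2},\,h\circ\pr_{1})$ together with Claim \ref{pull-back mult}, and then upgrade to equality by the same slice restriction.)
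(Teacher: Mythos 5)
Your first column is fine, but the second column rests on a false functoriality principle, so there is a genuine gap. You assert that $(p\circ q)^{*}=q^{*}\circ p^{*}$ holds exactly whenever $q$ is a morphism whose image is merely \emph{not contained} in $I_{p}$, and you use this to get $\iota_{1}^{*}\widetilde{f}^{*}\pr_{2}^{*}D=(\widetilde{f}\circ\iota_{1})^{*}\pr_{2}^{*}D$ and its analogue for $\iota_{2}$. For pull-back of rational maps defined by resolution and pushforward this is not true: if the image of $q$ meets $I_{p}$, then in general one only has the one-sided comparison of Lemma \ref{pull-backs}, namely $(p\circ q)^{*}H\leq q^{*}p^{*}H$ for free $H$ (e.g.\ for a quadratic Cremona map $p$ of $\P^{2}$ and $q$ a line through one indeterminacy point, $q^{*}p^{*}H$ has degree $2$ while $(p\circ q)^{*}H$ has degree $1$). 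Exact equality would require the slice to be \emph{disjoint} from $I_{\widetilde{f}}$, so that it lifts to the resolution, and that is not the case here: $h\colon A\dashrightarrow\overline{T}$ is only a rational map (it is built from a rational section $s$), so $A\times\{1\}$ can meet $I_{\widetilde{f}}$ along the indeterminacy of $h$, and $\{x_{0}\}\times\overline{T}$ meets $I_{\widetilde{f}}$ at boundary points of $\overline{T}$ where the extensions of $f_{T}$ and of the multiplication $m$ are undefined. Your closing remark that $I_{\widetilde{f}}$ is ``controlled by $I_{f_{T}}$ and $I_{m}$'' overlooks the contribution of $I_{h}$ and, in any case, non-containment is not enough for the equality you need.

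What the slice restrictions legitimately give, via Lemma \ref{pull-backs} applied with the inclusion as the first map, is only the lower bounds $D_{1}\geq h^{*}H_{\overline{T}}$ and $D_{2}\geq f_{T}^{*}H_{\overline{T}}$ (your observation that $T_{h(x_{0})}^{*}$ is the identity on $CH^{1}(\overline{T})$ is correct and needed there). The matching upper bounds require a separate argument: write $\pr_{2}\circ\widetilde{f}=m\circ(h\times f_{T})$, apply Lemma \ref{pull-backs} and Claim \ref{pull-back mult} to get $\widetilde{f}^{*}\pr_{2}^{*}H_{\overline{T}}\leq\pr_{1}^{*}h^{*}H_{\overline{T}}+\pr_{2}^{*}f_{T}^{*}H_{\overline{T}}$ for very ample $H_{\overline{T}}$, and then restrict an effective representative of the difference to general slices $\{a\}\times\overline{T}$ and $A\times\{b\}$ to conclude $D_{2}\leq f_{T}^{*}H_{\overline{T}}$ and $D_{1}\leq h^{*}H_{\overline{T}}$; combining the two directions gives the claim (first for very ample, then for all classes by linearity). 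Your parenthetical ``alternative'' is essentially this correct route, but as stated it still leans on the same unjustified exact equality for the upgrade, so as written the proof does not go through without being reworked into the two-sided inequality argument.
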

\begin{claimproof}
It is enough to prove the following two statements:
\begin{enumerate}
\item[(1)] $ \widetilde{f}^{*}\pr_{1}^{*}H_{A} = \pr_{1}^{*}g^{*}H_{A}$ for every ample divisor $H_{A}$ on $A$.
\item[(2)] $ \widetilde{f}^{*}\pr_{1}^{*}H_{ \overline{T}} = \pr_{1}^{*}h^{*}H_{ \overline{T}}+ \pr_{2}^{*}f_{T}^{*}H_{ \overline{T}}$
in $\CH^{1}(A \times \overline{T})$
for every very ample divisor $H_{ \overline{T}}$ on $ \overline{T}$.
\end{enumerate}

(1)
Since $\pr_{1}$ is a morphism, we have
\begin{align*}
\widetilde{f}^{*}\pr_{1}^{*}H_{A}=(\pr_{1}\circ \widetilde{f})^{*}H_{A}
=(g\circ \pr_{1})^{*}H_{A}=\pr_{1}^{*}g^{*}H_{A}.
\end{align*}

(2)
First, in $\CH^{1}(A \times \overline{T})$, we have
\begin{align*}
\widetilde{f}^{*}\pr_{2}^{*}H_{ \overline{T}}&=(\pr_{2}\circ \widetilde{f})^{*}H_{ \overline{T}}\\
&=(m\circ (h\times f_{T}))^{*}H_{ \overline{T}} \\
&\leq (h\times f_{T})^{*}m^{*}H_{ \overline{T}}  &  \qquad \text{by Lemma \ref{pull-backs}} \\
&= (h\times f_{T})^{*}(\pr_{1}^{*}H_{ \overline{T}}+\pr_{2}^{*}H_{ \overline{T}} )
 	&\qquad \text{by Claim \ref{pull-back mult}} \\
&=\pr_{1}^{*}h^{*}H_{ \overline{T}}+\pr_{2}^{*}f_{T}^{*}H_{ \overline{T}}.
\end{align*}
Now take an effective divisor $E$ on $A \times \overline{T}$ that represents
the class $(h\times f_{T})^{*}m^{*}H_{ \overline{T}}-(m\circ (h\times f_{T}))^{*}H_{ \overline{T}}$.
For a general closed point $a \in A$, $\Supp E$ does not contain $\{a\} \times \overline{T}$.
Let $i_{a} \colon \overline{T}=\{a\} \times \overline{T} \subset A \times \overline{T}$ be the inclusion.
Since $i_{a}^{*}E$ is effective, we have
\begin{align*}
i_{a}^{*}( \widetilde{f}^{*}\pr_{2}^{*}H_{ \overline{T}})\leq i_{a}^{*}(\pr_{1}^{*}h^{*}H_{ \overline{T}}+\pr_{2}^{*}f_{T}^{*}H_{ \overline{T}})
=f_{T}^{*}H_{ \overline{T}}.
\end{align*}
Similarly, if $b\in \overline{T}$ is a general closed point and $j_{b} \colon A=A\times \{b\}\subset A \times \overline{T}$
is the inclusion, we have
\begin{align*}
j_{b}^{*}( \widetilde{f}^{*}\pr_{2}^{*}H_{ \overline{T}})\leq j_{b}^{*}(\pr_{1}^{*}h^{*}H_{ \overline{T}}+\pr_{2}^{*}f_{T}^{*}H_{ \overline{T}})
=h^{*}H_{ \overline{T}}.
\end{align*}
Therefore, if we write $ \widetilde{f}^{*}\pr_{2}^{*}H_{ \overline{T}}=\pr_{1}^{*}D_{1}+\pr_{2}^{*}D_{2}$
where $D_{1}$ and $D_{2}$ are divisor classes on $A$ and $ \overline{T}$ respectively,
we have proved 
\begin{align*}
D_{1}\leq h^{*}H_{ \overline{T}},\ D_{2}\leq f_{T}^{*}H_{ \overline{T}}.
\end{align*}

On the other hand, for a general $a\in V\subset A$, $\{a\}\times \overline{T}$ is not contained in the indeterminacy locus of $ \widetilde{f}$,
and $\pr_{2}\circ \widetilde{f} \circ i_{a}= T_{h(a)}\circ f_{T}$.
Here, the translation $T_{h(a)}$ defines an automorphism on $ \overline{T}$ and induces identity on $\CH^{1}( \overline{T})$.
Thus
\begin{align*}
f_{T}^{*}H_{ \overline{T}}
&=f_{T}^{*}T_{h(a)}^{*}H_{ \overline{T}}=(T_{h(a)}\circ f_{T})^{*}H_{ \overline{T}}\\
&=(\pr_{2}\circ \widetilde{f} \circ i_{a})^{*}H_{ \overline{T}}=(\widetilde{f} \circ i_{a})^{*}\pr_{2}^{*}H_{ \overline{T}}\\
&\leq i_{a}^{*} \widetilde{f}^{*} \pr_{2}^{*} H_{ \overline{T}} & \text{by Lemma \ref{pull-backs}}\\
&=i_{a}^{*}(\pr_{1}^{*}D_{1}+\pr_{2}^{*}D_{2})=D_{2}.
\end{align*}
Hence we get $D_{2}\leq f_{T}^{*}H_{ \overline{T}}\leq D_{2}$ and therefore $D_{2}= f_{T}^{*}H_{ \overline{T}}$.

Similarly, since $A\times \{1\}$ is not contained in the indeterminacy locus of $ \widetilde{f}$,
we have
\begin{align*}
h^{*}H_{ \overline{T}}&=(\pr_{2}\circ \widetilde{f}\circ j_{1})^{*}H_{ \overline{T}}
=( \widetilde{f}\circ j_{1})^{*}\pr_{2}^{*}H_{ \overline{T}}\\
&\leq j_{1}^{*} \widetilde{f}^{*} \pr_{2}^{*} H_{ \overline{T}} & \text{by Lemma \ref{pull-backs}}\\
&=j_{1}^{*}(\pr_{1}^{*}D_{1}+\pr_{2}^{*}D_{2})=D_{1}.
\end{align*}
Thus $D_{1}=h^{*}H_{ \overline{T}}$.

\end{claimproof}

Note that $ \widetilde{(f^{n})}= \widetilde{f}^{n}$.
Set $h_{n}=f^{n}\circ s-s\circ g^{n}$.
Then we have
\begin{align*}
(\widetilde{f}^{n})^{*}=
\begin{pmatrix}
(g^{n})^{*}& h_{n}^{*}\\
0&(f_{T}^{n})^{*}
\end{pmatrix}.
\end{align*}
Note also that $N^{1}(A \times \overline{T})=(\CH^{1}(A)/\equiv) \oplus \CH^{1}( \overline{T})$
and the action of $(\widetilde{f}^{n})^{*}$ on $N^{1}(A \times \overline{T})$ is in the same form.
By \cite[Theorem 15]{ks3}, $\delta_{ \widetilde{f}}=\lim_{n\to \infty }\rho(( \widetilde{f}^{n})^{*}|_{N^{1}(A \times \overline{T})})^{1/n}$.
Thus
\begin{align*}
\delta_{f}=\delta_{ \widetilde{f}}=\lim_{n\to \infty }\max\{\rho((g^{n})^{*}|_{N^{1}(A)}), \rho((f_{T}^{n})^{*}|_{N^{1}( \overline{T})})\}^{1/n}
=\max\{\delta_{g},\delta_{f_{T}}\}.
\end{align*}

\end{proof}

\begin{lem}\label{translation and dynamical degree}
Let $f \colon X \longrightarrow X$ be a surjective group homomorphism of a semi-abelian variety $X$ and
$a \in X$ a closed point. Then $\delta_{T_{a}\circ f}=\delta_{f}$.
\end{lem}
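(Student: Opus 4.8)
The plan is to reduce to the split situation provided by Proposition \ref{dyn deg of isog of semiab} and then handle the torus and abelian parts separately, since the behaviour of a translation with respect to dynamical degree is already understood in each of those cases (for the torus part this is classical monomial-map theory, for the abelian part the dynamical degree is a birational invariant and translations do not change the action on $N^{1}$). Concretely, I would first invoke the exact sequence $0 \to T \to X \xrightarrow{\pi} A \to 0$ and the section construction from the proof of Proposition \ref{dyn deg of isog of semiab}: over a non-empty open $U \subset A$ with section $s$, the map $X \dashrightarrow A \times \overline{T}$ conjugates $f$ to $\widetilde{f}(x,y) = (g(x), f_{T}(y) + h(x))$. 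Writing $a = a_{1} + a_{2}$ under the isogeny $A \times T \to X$ is not quite what I want; instead, under the same birational chart, $T_{a}\circ f$ is conjugate to $(x,y) \mapsto (\pi(a) + g(x),\, f_{T}(y) + h(x) + (a - s(\pi(a))))$, i.e. to $\widetilde{f}$ post-composed with the translation by $(\pi(a), a - s(\pi(a)))$ on $A \times \overline{T}$. So it suffices to prove the statement for maps of the form $\widetilde{f}$ on $A \times \overline{T}$, with an arbitrary translation inserted.

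Next I would run the same pull-back computation as in Claim \ref{pull-back matrix}, but now for $T_{(b,c)} \circ \widetilde{f}$ where $b \in A$, $c \in \overline{T}$ (a general translation; note the translation by $c$ extends to an automorphism of $\overline{T} = (\P^{1})^{\dim T}$ acting trivially on $CH^{1}(\overline{T})$, and translation by $b$ is an automorphism of $A$ acting trivially on $N^{1}(A)$). The key point is that translation on each factor acts as the identity on $CH^{1}$, so the matrix of $(T_{(b,c)}\circ\widetilde{f})^{*}$ on $N^{1}(A\times\overline{T}) = N^{1}(A)\oplus CH^{1}(\overline{T})$ is still upper-triangular with diagonal blocks $g^{*}$ and $f_{T}^{*}$ — exactly the same as for $\widetilde{f}$ itself. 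The off-diagonal block changes (it picks up contributions from $h$ and from the translation components), but this is irrelevant: the spectral radius of an upper-triangular block matrix depends only on the diagonal blocks. Composing $n$ times and invoking \cite[Theorem 15]{ks3} as in the proof of Proposition \ref{dyn deg of isog of semiab}, one gets
\[
\delta_{T_{a}\circ f} = \lim_{n\to\infty}\rho\bigl((T_{(b,c)}\circ\widetilde{f})^{n*}|_{N^{1}(A\times\overline{T})}\bigr)^{1/n} = \max\{\delta_{g}, \delta_{f_{T}}\} = \delta_{f},
\]
where the last equality is Proposition \ref{dyn deg of isog of semiab}.

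The main obstacle is verifying that the iterate $(T_{(b,c)}\circ\widetilde{f})^{n}$ still has the required form with the $h$-type (off-diagonal) terms confined strictly below the diagonal and, more delicately, that \cite[Theorem 15]{ks3} genuinely applies: that theorem expresses $\delta$ as a limit of spectral radii of pull-backs on $N^{1}$ for morphisms, but here $\widetilde{f}$ is only a rational map on the compactification $A\times\overline{T}$, so one must be careful that the indeterminacy issues (the reason Lemma \ref{pull-backs} gives inequalities rather than equalities in Claim \ref{pull-back matrix}) do not spoil the computation. The resolution is the same as in Proposition \ref{dyn deg of isog of semiab}: the diagonal-block structure is established with genuine equalities $D_{1} = h^{*}H_{\overline{T}}$, $D_{2} = f_{T}^{*}H_{\overline{T}}$ (using general fibres $\{a\}\times\overline{T}$ and $A\times\{b\}$ on which $\widetilde{f}$ is a morphism), and inserting a translation only modifies the $h$ on those fibres by a further translation, which again acts trivially on cohomology. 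Once that is in place the argument is a verbatim repetition of the end of the proof of Proposition \ref{dyn deg of isog of semiab}, so I would likely present it tersely, pointing back to that proof rather than reproducing the full pull-back bookkeeping.
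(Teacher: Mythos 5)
Your proposal is essentially correct, but it takes a genuinely different and heavier route than the paper. The paper's proof is a few lines: take Vojta's standard compactification $\overline{X}$ of $X$ itself, on which every translation extends to an automorphism acting as the identity on $N^{1}(\overline{X})$; since $(T_{a}\circ f)^{n}=T_{b}\circ f^{n}$ with $b=a+f(a)+\cdots+f^{n-1}(a)$, one gets $((T_{a}\circ f)^{n})^{*}=(f^{n})^{*}\circ T_{b}^{*}=(f^{n})^{*}$ on $N^{1}(\overline{X})$, and the norm characterization of the dynamical degree finishes the argument. You instead pass to the model $A\times\overline{T}$ via the section $s$, re-run the pull-back computation of Claim \ref{pull-back matrix} with a translation inserted, and then invoke \cite[Theorem 15]{ks3} together with Proposition \ref{dyn deg of isog of semiab}. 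This does work: translations act trivially on $N^{1}(A)$ and on $CH^{1}(\overline{T})$, the block-triangular shape persists for every iterate because $(T_{a}\circ f)^{n}=T_{a_{n}}\circ f^{n}$ is again a translated homomorphism (so the fibre-restriction argument of Claim \ref{pull-back matrix} applies to it directly rather than to a power of a matrix of pull-backs), and only the diagonal blocks matter for the spectral radius. What your route buys is independence from Vojta's description of translations on the standard compactification of the semi-abelian variety itself (you only use the elementary facts on $A$ and on $(\P^{1})^{\dim T}$), at the cost of repeating the bookkeeping of Proposition \ref{dyn deg of isog of semiab}. One small correction: since $s$ is not a homomorphism, the conjugate of $T_{a}\circ f$ in the chart is not literally $T_{(\pi(a),\,a-s(\pi(a)))}\circ\widetilde{f}$; its second component is $f_{T}(y)+f(s(x))+a-s(\pi(a)+g(x))$. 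It is still of the form $(x,y)\mapsto\bigl(T_{\pi(a)}(g(x)),\,f_{T}(y)+\widetilde{h}(x)\bigr)$ with $\widetilde{h}\colon A\dashrightarrow T$ rational, which is all your argument needs, so this is harmless but should be stated accurately.
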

\begin{proof}
Let $ \overline{X}$ be the standard compactification of $X$ as in \cite[\S 2 (2.3)]{vo}.
Then $T_{a}$ extends to an automorphism of $ \overline{X}$, which we also denote by $ T_{a}$, and
the pull-back $ T_{a}^{*} \colon N^{1}( \overline{X}) \longrightarrow N^{1}( \overline{X})$ is the identity.
(We can deduce these facts from the description of the group law in terms of the compactification, cf.  \cite[the proof of Proposition 2.6]{vo}.)
Thus, as an endomorphisms of $N^{1}( \overline{X})$, we have
\[
((T_{a}\circ f)^{n})^{*}=(T_{b}\circ f^{n})^{*}=(f^{n})^{*}\circ T_{b}^{*}=(f^{n})^{*}
\]
where $b=a+f(a)+\cdots +f^{n-1}(a)$.
Therefore, 
\[
\delta_{T_{a}\circ f}=\lim_{n \to \infty}\left\|((T_{a}\circ f)^{n})^{*}\right\|^{1/n}=\lim_{n \to \infty}\left\|(f^{n})^{*}\right\|^{1/n}=\delta_{f}
\]
where $\left\| \cdot \right\|$ is a norm on $\End_{\R}(N^{1}( \overline{X})_{\R})$.
\end{proof}

\begin{proof}[Proof of Theorem \ref{main dyn deg}]
(2) is Lemma \ref{translation and dynamical degree}.
(1) follows from Proposition \ref{dyn deg of isog of semiab}, Lemma \ref{dyn deg of isog} and Remark \ref{dyn deg of monomial map}.
\end{proof}

\subsection{Kawaguchi-Silverman conjecture and arithmetic degrees}\label{subsec:semi-abelian arith deg}
 
 In this subsection, the ground field is $ \overline{\mathbb Q}$.

\begin{lem}\label{core case}
Let $f \colon X \longrightarrow X$ be a surjective group homomorphism of a semi-abelian variety.
Fix an exact sequence
\[
\xymatrix{
0 \ar[r] & T \ar[r] & X \ar[r]^{\pi} & A \ar[r] &0.
}
\]
The morphisms induced by $f$ is denoted by
\begin{align*}
f_{T} \colon  &T \longrightarrow T\\
g \colon  & A \longrightarrow A.
\end{align*}
Suppose the minimal polynomial of $f$ as an element of  $\End(X) {\otimes}_{\Z}\Q$ is the form of $F(t)^{e}$
where $F(t)$ is a monic irreducible polynomial that is not cyclotomic and $e>0$.
(Note that the minimal polynomial is automatically monic with integer coefficient because it is the case for $f_{T}$ and $g$.)
Then, for $x\in X(\QQ)$, either
\begin{enumerate}
\item[\rm (1)] $O_{g}(\pi(x))$ is infinite and $ \alpha_{f}(x)=\delta_{f}$ or,
\item[\rm (2)] $\pi(x)$ is a torsion point and $ \alpha_{f}(x)=1 \text{ or } \delta_{f_{T}} $.
\end{enumerate}
Moreover, 
\[
A(f)=\{1, \delta_{f_{T}}, \delta_{g}\}=
\begin{cases}
\{1, \rho(F)\} \qquad \text{if $X=T$,}\\
\{1, \rho(F)^{2}\} \qquad \text{if $X=A$,}\\
\{1, \rho(F), \rho(F)^{2}\} \qquad \text{otherwise}.
\end{cases}
\]

\end{lem}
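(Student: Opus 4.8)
\emph{Plan.} The idea is to reduce everything, through the exact sequence $0\to T\to X\xrightarrow{p}A\to 0$, to the known results on tori (Theorem \ref{key known thm}(2)) and on abelian varieties (Theorems \ref{main}, \ref{main2}), together with the dynamical degree computation of \S\ref{subsec:semi-abelian dyn deg} and the monotonicity Lemma \ref{ascent}.

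First I would record preliminaries. Since $f$ is surjective with finite kernel it is an isogeny, and so are $f_T$ and $g$; in particular the minimal polynomial $F(t)^e$ of $f$ satisfies $t\nmid F(t)$, i.e.\ $F(0)\neq 0$. Being an irreducible monic integer polynomial which is neither $t$ nor cyclotomic, $F$ satisfies, by Kronecker's theorem, $\rho(F)>1$ and, moreover, no root of $F$ is a root of unity. The minimal polynomials of $f_T$ and of $g$ are powers of $F$, so Theorem \ref{main dyn deg}(1) (cf.\ Remark \ref{dyn deg of monomial map}) and Lemma \ref{dyn deg of isog} give $\delta_{f_T}=\rho(F)$, $\delta_g=\rho(F)^2$, whence $\delta_f=\max\{\delta_{f_T},\delta_g\}=\delta_g=\rho(F)^2$ when $A\neq 0$ (and $\delta_f=\delta_{f_T}=\rho(F)$ when $A=0$). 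The degenerate cases $X=T$ and $X=A$ are already contained in Theorem \ref{key known thm}(2) and in Theorems \ref{main}, \ref{main2} respectively, but the argument below is uniform. Now let $x\in X(\QQ)$. By Theorem \ref{main2} the orbit $O_g(p(x))$ is finite if and only if $p(x)$ is a torsion point, so exactly one of the two cases occurs. \textbf{Case (1): $O_g(p(x))$ infinite.} Then $p(x)$ is non-torsion, so Lemma \ref{power of irr case} applied to the isogeny $g$ (whose minimal polynomial is a power of the irreducible $F$), together with $\alpha_g(p(x))\le\delta_g$ and the existence of $\alpha_g(p(x))$ from Theorem \ref{key known thm}(1), forces $\alpha_g(p(x))=\delta_g$. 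Applying Lemma \ref{ascent} to the commutative square formed by $f$, $g$ and $p$ gives $\underline{\alpha}_f(x)\ge\alpha_g(p(x))=\delta_g=\delta_f$, while $\overline{\alpha}_f(x)\le\delta_f$; hence $\alpha_f(x)$ exists and equals $\delta_f$.

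\textbf{Case (2): $p(x)$ torsion.} Then $O_g(p(x))$ is finite, so after replacing $x$ by $f^{n_0}(x)$ for a suitable $n_0$ — which changes neither the existence nor the value of $\alpha_f(x)$ — we may assume $g^k(p(x))=p(x)$ for some $k\ge 1$. Then the whole $f^k$-orbit of $x$ lies in the coset $x+T$, and under the isomorphism of varieties $T\to x+T$, $t\mapsto x+t$, the map $f^k|_{x+T}$ becomes $T_c\circ f_T^k$ on $T$, where $c:=f^k(x)-x\in T$. Since no root of $F$ is a root of unity, the endomorphism $f_T^k-\id$ of $T$ has no zero eigenvalue, hence is an isogeny; as $\QQ$ is algebraically closed it is surjective on $\QQ$-points, so $T_c\circ f_T^k$ is conjugate by a translation of $T$ to $f_T^k$. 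Combining these two conjugacies with the independence of the arithmetic degree of the compactification and with its invariance under restriction to an invariant subvariety (all standard, cf.\ \cite[Theorem 3.4]{mss}), we get $\alpha_{f^k}(x)=\alpha_{f_T^k}(w)$ for some $w\in T(\QQ)$. By Theorem \ref{key known thm}(2), $A(f_T)=\{1,\rho(F)\}$, and since $\alpha_{f_T^k}(w)=\alpha_{f_T}(w)^k$ we obtain $\alpha_{f^k}(x)\in\{1,\rho(F)^k\}$. Using $\overline{\alpha}_{f^k}=\overline{\alpha}_f^{\,k}$ and $\underline{\alpha}_{f^k}=\underline{\alpha}_f^{\,k}$ (\cite{ks3}, \cite[\S3]{mss}), this forces $\overline{\alpha}_f(x)=\underline{\alpha}_f(x)$, so $\alpha_f(x)$ exists and lies in $\{1,\rho(F)\}=\{1,\delta_{f_T}\}$.

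Finally, Cases (1) and (2) give $A(f)\subseteq\{1,\delta_{f_T},\delta_g\}$. Conversely, $1=\alpha_f(0)$; if $A\neq 0$ any $x$ with $p(x)$ non-torsion realizes $\delta_g$ by Case (1); and if $T\neq 0$, choosing $t_0\in T(\QQ)$ with $\alpha_{f_T}(t_0)=\delta_{f_T}$ (possible since $\delta_{f_T}\in A(f_T)$ by Theorem \ref{key known thm}(2)) and noting that $O_f(t_0)\subset T$, we get $\alpha_f(t_0)=\alpha_{f_T}(t_0)=\delta_{f_T}$. Hence $A(f)=\{1,\delta_{f_T},\delta_g\}$, and substituting $\delta_{f_T}=\rho(F)$, $\delta_g=\rho(F)^2$ — with $\delta_{f_T}=1$ when $X=A$ and $\delta_g=1$ when $X=T$ — yields the three displayed cases. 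The step I expect to be the main obstacle is Case (2): one must verify carefully that passing to the periodic coset $x+T$, then translating back to $T$, and finally relating $f$ to $f^k$, leaves the arithmetic degree unchanged; this rests on the foundational properties of $\alpha_f$ (compactification independence, restriction to invariant subvarieties, the $k$-th power relation), and it is precisely here — not merely in the inequality $\rho(F)>1$ — that the non-cyclotomicity of $F$ is genuinely needed, via the fact that $f_T^k-\id$ is an isogeny.
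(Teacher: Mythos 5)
Your Case (1) and the computation of the dynamical degrees coincide with the paper's argument (Lemma \ref{power of irr case} for $g$, then Lemma \ref{ascent} plus $\overline{\alpha}_{f}(x)\leq\delta_{f}$). Where you genuinely diverge is Case (2) and the attainment statement. The paper handles a point with $p(x)$ torsion by choosing $n$ with $np(x)=0$, so that $nx\in T$, and then invoking Lemma \ref{multiple by n} (which rests on Poonen's canonical heights on semi-abelian varieties) to get $\alpha_{f}(x)=\alpha_{f}(nx)=\alpha_{f_{T}}(nx)\in\{1,\delta_{f_T}\}$; you instead pass to an iterate so that $p(x)$ is $g$-periodic, restrict $f^{k}$ to the invariant coset $x+T$, identify it with $T_{c}\circ f_{T}^{k}$, kill the translation using that $f_{T}^{k}-\id$ is an isogeny (this is exactly the mechanism of Lemma \ref{minimal polynomial and conjugation}, and you are right that non-cyclotomicity enters here), and then transfer back via the power relations for $\overline{\alpha}$ and $\underline{\alpha}$. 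Both routes reduce to Theorem \ref{key known thm}(2); yours avoids the canonical-height Lemma \ref{multiple by n} but pays for it with the power trick and a coset-restriction comparison. That comparison step ($\alpha_{f^{k},X}(x)=\alpha_{T_{c}\circ f_{T}^{k},\,T}(0)$, and likewise $\alpha_{f}(t_{0})=\alpha_{f_{T}}(t_{0})$ for $t_{0}\in T$) is not completely free: it needs an argument in the spirit of Lemma \ref{comparing arithmetic deg} or a direct height-restriction argument for the closed invariant subvariety, though to be fair the paper's own proof uses the same fact silently when it writes $\alpha_{f}(nx)=\alpha_{f_{T}}(nx)$, so this is an acceptable level of rigor, and the power relations $\overline{\alpha}_{f^{k}}=\overline{\alpha}_{f}^{\,k}$, $\underline{\alpha}_{f^{k}}=\underline{\alpha}_{f}^{\,k}$ are standard for self-morphisms.

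One small point to repair in the attainment of $\delta_{g}$: you say ``any $x$ with $p(x)$ non-torsion realizes $\delta_{g}$,'' which is true, but you never justify that such a point of $A(\QQ)$ exists; the existence of non-torsion $\QQ$-points on a positive-dimensional abelian variety is not a triviality. The clean fix, which is what the paper does implicitly, is to quote the attainment result ($A(g)=\{1,\delta_{g}\}$ from Theorem \ref{main}, ultimately \cite[Corollary 32]{ks1} or \cite[Theorem 1.6]{mss}): pick $y\in A(\QQ)$ with $\alpha_{g}(y)=\delta_{g}>1$ (automatically non-torsion), lift it through the surjection $p$, and apply your Case (1). With that substitution your proof is complete and correct, just along a partly different path from the paper's.
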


\begin{lem}\label{multiple by n}
Let $X$ be a semi-abelian variety and $f \colon X \longrightarrow X$ be a surjective group homomorphism.
Let $x \in X(\QQ)$ be a point and $n>0$ a positive integer.
If $ \alpha_{f}(nx)$ exists, then $ \alpha_{f}(x)$ also exists and $ \alpha_{f}(nx)= \alpha_{f}(x)$.
\end{lem}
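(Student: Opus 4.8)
The plan is to exploit the multiplication-by-$n$ map $[n]\colon X \longrightarrow X$, which is an isogeny of the semi-abelian variety $X$ commuting with the group homomorphism $f$. Indeed $f\circ[n]=[n]\circ f$ since $f$ is a homomorphism, so we get a commutative diagram
\[
\xymatrix{
X \ar[r]^{f} \ar[d]_{[n]} & X \ar[d]^{[n]}\\
X \ar[r]_{f} & X.
}
\]
Now I would take a smooth projective compactification $\overline{X}$ of $X$ on which $[n]$ extends to a morphism (for instance the standard compactification of \cite[\S 2 (2.3)]{vo}, to which all our height computations refer). Since $[n]$ is a finite surjective morphism on $X$, there is a nonempty open $U\subset X$ over which $[n]$ is finite; after shrinking we may also arrange $O_f(nx)\subset U$ and $O_f(x)\subset [n]^{-1}(U)$, using that the orbits in question are $f$-orbits in $X$ and that a suitable translate/shrink of $U$ absorbs finitely-generated-type orbit data — more simply, note $[n]$ is actually finite on all of $X$ (it is an isogeny), so on the open part $X\subset\overline{X}$ the map is everywhere finite and we may take $U=X$, $V=[n]^{-1}(X)=X$.

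With that in hand, the statement is a direct application of Lemma \ref{comparing arithmetic deg}: take $Y=\overline{X}$ with $g=$ the extension of $f$, $\pi=$ the extension of $[n]$, the open set $U$ chosen above so that $\pi$ is finite over it, $y$ a point of $\overline{X}(\QQ)$ lying over $nx$ — and here is the one genuine subtlety — we need $y$ with $\pi(y)=nx$ and $O_g(y)\subset V$ and $O_f(nx)\subset U$. The natural choice is $y=x$ itself, since $[n](x)=nx$ and $[n]\circ f = f\circ [n]$ forces $[n](f^k(x))=f^k(nx)$, so indeed $\pi(x)=nx$ and $O_f(x)$ maps onto $O_f(nx)$. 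Thus the hypotheses of Lemma \ref{comparing arithmetic deg} are met with $y=x$, and its conclusion gives: if $\alpha_f(nx)$ exists then $\alpha_f(x)$ exists and $\alpha_f(x)=\alpha_f(nx)$.

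The main obstacle is purely a bookkeeping one: making sure the finite-over-an-open-set hypothesis of Lemma \ref{comparing arithmetic deg} is genuinely available for $[n]$ viewed as a rational self-map of the \emph{projective} compactification $\overline{X}$, since $[n]$ need not extend to a morphism on an arbitrary compactification and need not be finite near the boundary even when it does. This is handled by choosing $\overline{X}$ to be the standard equivariant compactification of \cite[\S 2 (2.3)]{vo}, on which $[n]$ extends to a morphism, and then taking $U=X$ (open in $\overline{X}$), over which $\pi=[n]$ restricts to the honest isogeny $X\to X$, which is finite; the orbit conditions $O_f(x)\subset X$ and $O_f(nx)\subset X$ hold trivially because $f$ is a morphism of $X$. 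Everything else is a formal invocation of the already-proved Lemma \ref{comparing arithmetic deg}.
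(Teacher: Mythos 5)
There is a genuine gap, and it is the direction of the implication in Lemma \ref{comparing arithmetic deg}. In your setup the morphism $\pi$ is (the extension of) $[n]$, the point upstairs is $y=x$, and the point downstairs is $\pi(y)=nx$. Lemma \ref{comparing arithmetic deg} transfers existence \emph{from the point upstairs to its image}: its hypothesis is that $\alpha_{g}(y)$ exists, and its conclusion is that $\alpha_{f}(\pi(y))$ exists and equals it. With your choices this reads: if $\alpha_{f}(x)$ exists, then $\alpha_{f}(nx)$ exists and equals it --- i.e.\ the hypothesis of the lemma is exactly the statement you are trying to prove, while the datum you actually have, the existence of $\alpha_{f}(nx)$, sits at the downstairs point and is not an input the lemma accepts. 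You cannot flip the roles of the two copies of $\overline{X}$, because that would require a morphism sending $nx$ to $x$ (``division by $n$''), which does not exist; and Lemma \ref{ascent} only gives the inequalities $\overline{\alpha}_{f}(x)\geq\overline{\alpha}_{f}(nx)$ and $\underline{\alpha}_{f}(x)\geq\underline{\alpha}_{f}(nx)$, which bound in the wrong direction for concluding $\overline{\alpha}_{f}(x)\leq\alpha_{f}(nx)$. So the final step is not ``a formal invocation'' of the quoted lemma; as written the argument assumes what is to be proved. (The geometric bookkeeping --- that $[n]$ extends to a morphism of the standard compactification with $[n]^{-1}(X)=X$, finite over $U=X$ --- is fine, though asserted rather than checked.)

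The gap is repairable, but only by going back inside a proof rather than citing a statement. The sandwich inequality established in the proof of Lemma \ref{comparing arithmetic deg}, namely $h_{A}(\widetilde{g}^{m}(\widetilde{y}))+O(1)\leq h_{H}(f^{m}(\pi(y)))\leq Ch_{A}(\widetilde{g}^{m}(\widetilde{y}))+O(1)$, is symmetric enough to yield the reverse transfer as well (existence downstairs forces existence of $\alpha_{\widetilde{g}}(\widetilde{y})$, hence of $\alpha_{g}(y)$ by birational invariance), so you could state and prove a two-sided variant of that lemma and then your reduction to the $[n]$-square would go through. Alternatively, and more in the spirit of what the paper actually does, one can avoid compactifying $[n]$ altogether: using the canonical heights $h_{0}$ (linear) and $h_{1}$ (quadratic) of Poonen on the semi-abelian variety, one has $h(f^{m}(nx))=h(nf^{m}(x))=nh_{0}(f^{m}(x))+n^{2}h_{1}(f^{m}(x))$, whence $\tfrac{1}{n^{2}}h(f^{m}(nx))\leq h(f^{m}(x))\leq h(f^{m}(nx))$, and taking $m$-th roots gives both directions at once. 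Either route works; the proposal as it stands does not.
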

\begin{proof}
Let $X''$ and $X'$ be smooth projectivization of $X$ such that the multiplication morphism $[n] \colon X \longrightarrow X$
becomes a morphism $\pi \colon X'' \longrightarrow X'$.
Let $f'' \colon X'' \longrightarrow X''$ and $f' \colon X' \longrightarrow X'$ be the dominant rational maps induced by $f$.
Since $f$ is a group homomorphism, we have $f' \circ \pi=\pi \circ f''$.
Moreover, we have $\pi^{-1}(X)=X$ since $[n] \colon X \longrightarrow X$ is finite.
By Lemma \ref{comparing arithmetic deg}, we get the assertion.
\end{proof}

\begin{proof}[Proof of Lemma \ref{core case}]
First of all, we have
\[
\delta_{f}=\max\{\delta_{g}, \delta_{f_{T}} \}=\max\{ \rho(F)^{2}, \rho(F)\}=\rho(F)^{2}=\delta_{g}
\]
(see Theorem \ref{key known thm}(2), Proposition \ref{dyn deg of isog of semiab}, Lemma \ref{dyn deg of isog}).  
By Lemma \ref{power of irr case}, we have
\begin{align*}
\alpha_{g}(\pi(x))=
\begin{cases}
1 \qquad \text{if $\pi(x)$ is torsion}\\
\delta_{g}=\delta_{f} \qquad \text{otherwise.}
\end{cases}
\end{align*}
Note that, by Lemma \ref{ascent} and Remark \ref{rmk on dyn deg} (3), we have $ \alpha_{g}(\pi(x))\leq \underline{\alpha}_{f}(x)\leq \delta_{f}$. 
Thus, if $ \alpha_{g}(\pi(x))=\delta_{f}$, $ \alpha_{f}(x)$ exists and is equal to $\delta_{f}$.

Now, suppose $\pi(x)$ is a torsion point.
Take a positive integer $n$ such that $n\pi(x)=0$.
Then $nx \in T$ and therefore $ \alpha_{f}(nx)= \alpha_{f_{T}}(nx)$ exists and is equal to $1$ or $\rho(F)=\delta_{f_{T}}$ (Theorem \ref{key known thm}(2)).
By Lemma \ref{multiple by n},  $ \alpha_{f}(x)$ exists and is equal to $1$ or $\delta_{f_{T}}$.

The claim $A(f)=\{1, \delta_{f_{T}}, \delta_{f}\}$  follows from the facts that $A(f_{T})=\{1, \delta_{f_{T}}\}$ (Theorem \ref{key known thm}(2)),
$A(g)=\{1, \delta_{g}\}$ (Lemma \ref{power of irr case}) and
$ \alpha_{f}(x)\geq \alpha_{g}(\pi(x))$ (Lemma \ref{ascent}).  
\end{proof}

\begin{lem}\label{minimal polynomial and conjugation}
Let $f \colon X \longrightarrow X$ be a group homomorphism of a semi-abelian variety.
Let $F(t)$ be the minimal monic polynomial of $f$.
Assume $F(1)\neq 0$.
Let $a\in X(\QQ)$ be any point.
Then there exists a point $b\in X(\QQ)$ such that $h:=T_{b}\circ (T_{a}\circ f) \circ T_{-b}$ is a group homomorphism.
For every such $b$, the minimal polynomial of $h$ is also $F(t)$.
\end{lem}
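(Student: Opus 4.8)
The plan is to reduce the whole statement to a one-line computation with translations, together with the observation that the hypothesis forces $f-\id_X$ to be an isogeny; this is the standard device of absorbing a translation into a conjugation.

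First I would expand the conjugate directly. For $x\in X(\QQ)$, using $f(x-b)=f(x)-f(b)$ (valid since $f$ is a group homomorphism),
\[
h(x)=T_{b}\bigl(T_{a}(f(T_{-b}(x)))\bigr)=b+a+f(x-b)=f(x)+\bigl(a+b-f(b)\bigr).
\]
Hence $h=T_{c}\circ f$ with $c:=a+b-f(b)\in X(\QQ)$. Since $f(0)=0$ we get $h(0)=c$, so $h$ is a group homomorphism if and only if $c=0$, i.e. if and only if $(f-\id_X)(b)=a$; and for such $b$ one in fact has $h=f$. This already settles the second assertion of the lemma: for every $b$ making $h$ a homomorphism, $h=f$, so its minimal monic polynomial is exactly $F(t)$.

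It remains to produce one such $b$, i.e. to solve $(f-\id_X)(b)=a$ in $X(\QQ)$, and this is where the hypothesis $F(1)\neq 0$ enters. Writing $F(t)=(t-1)Q(t)+F(1)$ with $Q(t)\in\Z[t]$ (remainder theorem) and substituting $f$, the relation $F(f)=0$ gives $(f-\id_X)\circ(-Q(f))=$ (the multiplication-by-$F(1)$ endomorphism of $X$) in $\End(X)$. Since $X$ is divisible (we are in characteristic zero), multiplication by $F(1)$ is surjective, hence so is $f-\id_X$; and since $\QQ$ is algebraically closed, $f-\id_X$ is surjective on $\QQ$-points. Choosing $b\in X(\QQ)$ with $(f-\id_X)(b)=a$, the map $h=T_{b}\circ(T_{a}\circ f)\circ T_{-b}$ then equals $f$, in particular is a homomorphism with minimal polynomial $F(t)$.

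I do not expect any genuine obstacle: the content is the identity $h=T_{a+b-f(b)}\circ f$ and the surjectivity of $f-\id_X$, for which the divisibility of $X$ does all the work. The only points needing a little care are the sign bookkeeping and the direction of the conjugation, and noting that the solution $b$ can be taken in $X(\QQ)$, which is automatic over an algebraically closed field.
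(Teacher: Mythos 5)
Your proof is correct, and on both halves it is a little sharper than the paper's own argument, though the underlying mechanism is the same. The existence part coincides: choose $b$ with $(f-\id)(b)=a$, which needs $f-\id$ surjective; the paper merely asserts this, while you justify it via $F(t)=(t-1)Q(t)+F(1)$ together with divisibility of $X$. Note that both you and the paper are really using the hypothesis $F(1)\neq 0$: the printed condition $F(1)\neq 1$ is evidently a typo, since for $f=\id$ one has $F(t)=t-1$, $F(1)=0\neq 1$, yet $T_{b}\circ T_{a}\circ T_{-b}=T_{a}$ is never a homomorphism for $a\neq 0$; your reading is the intended one and is exactly how the paper itself uses the hypothesis. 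For the second assertion, the paper treats $h$ as a possibly different homomorphism, computes $h^{n}=T_{f^{n}(b)}\circ f^{n}\circ T_{-b}$, deduces $F(h)=0$, and then needs a symmetry argument to pass from divisibility of minimal polynomials to equality. You instead note $h=T_{a+b-f(b)}\circ f$, so $h$ is a homomorphism precisely when $(f-\id)(b)=a$, and in that case $h=f$ identically, which makes the minimal-polynomial claim immediate and removes the symmetry step. This is the same computation the paper performs implicitly (since $f^{n}(b)+f^{n}(x-b)=f^{n}(x)$), but your packaging is more transparent and even yields the stronger conclusion $h=f$; there is no gap.
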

\begin{proof}
Since $F(1)\neq 0$, $f-\id$ is surjective.
For any $b\in X(\QQ)$ with $f(b)-b=a$, the morphism $T_{b}\circ (T_{a}\circ f) \circ T_{-b}$ is a group homomorphism.

Now we prove the second part.
By symmetry, it is enough to prove $F(h)=0$.
We have
\[
h^{n}=T_{b}\circ (T_{a}\circ f)^{n}\circ T_{-b}=T_{b}\circ T_{a+f(a)+\cdots +f^{n-1}(a)} \circ f^{n}\circ T_{-b}.
\]
Note that since $h$ is a group homomorphism, we have $h(0)=0$, in other words, $a=(f-\id)(b)$.
Thus 
\[
h^{n}=T_{b}\circ T_{f^{n}(b)-b}\circ f^{n}\circ T_{-b}=T_{f^{n}(b)}\circ f^{n} \circ T_{-b}.
\]
Therefore, for any $x\in X(\QQ)$
\[
F(h)(x)=F(f)(b)+F(f)(x-b)=0.
\]
\end{proof}

\begin{proof}[Proof of Theorem \ref{main semiab}]

Let $X$ be a semi-abelian variety and first assume $f \colon X \longrightarrow X$ is a group homomorphism.
We use the notation of \S \ref{splitting}.
Apply Lemma \ref{comparing arithmetic deg} for a suitable smooth compactification of
\[
\xymatrix@C=46pt{
X_{0}\times \cdots \times X_{r} \ar[r]^{f_{0}\times \cdots \times f_{r}} \ar[d]_{\pi}&X_{0}\times \cdots \times X_{r} \ar[d]^{\pi}\\
X \ar[r]_{f}& X.
}
\]
By  Lemma \ref{core case}, $ \alpha_{f_{i}}(x)$ exists for every $i$ and every point $x \in X_{i}(\QQ)$.
Therefore, by Lemma \ref{product case} and Lemma \ref{comparing arithmetic deg},  $A(f)=A(f_{0}\times \cdots \times f_{r})=A(f_{0})\cup \cdots \cup A(f_{r})$.
Since $f_{0}$ is nilpotent, $A(f_{0})=\{1\}$.
If $F_{i}$ is a cyclotomic polynomial, then $\delta_{f_{i}}=1$ and $A(f_{i})=\{1\}$.
Therefore by Lemma \ref{core case}, we have
\begin{align*}
A(f)&=A(f_{1})\cup \cdots \cup A(f_{r})\\
&=\{1\}\cup A_{1}\cup \cdots \cup A_{r}.
\end{align*}

Now,  consider any self-morphism of $X$.
Any self-morphism is the form of $T_{a}\circ f$ where $T_{a}$ is the translation by $a\in X(\QQ)$ and $f$ is a group homomorphism.
There exist points $a_{i}\in X_{i}(\QQ)$ such that $\pi(a_{0},\dots, a_{r})=a_{0}+\cdots +a_{r}=a$.
Then we have the following commutative diagram:
\[
\xymatrix@C=46pt{
X_{0}\times \cdots \times X_{r} \ar[r]^{f_{0}\times \cdots \times f_{r}} \ar[d]_{\pi}&X_{0}\times \cdots \times X_{r} \ar[d]^{\pi} \ar[r]^{T_{a_{0}}\times \cdots \times T_{a_{r}}} & X_{0}\times \cdots \times X_{r} \ar[d]^{\pi}\\
X \ar[r]_{f}& X \ar[r]_{T_{a}} & X.
}
\]
As above, we have $A(T_{a}\circ f)=A\bigl((T_{a_{0}}\circ f_{0}) \times \cdots \times (T_{a_{r}}\circ f_{r})\bigr)$.
Since $f_{0}$ is nilpotent, every orbit of $T_{a_{0}}\circ f_{0}$ is finite and therefore $A(T_{a_{0}}\circ f_{0})=\{1\}=A(f_{0})$.
If $F_{i}(t)$ is a cyclotomic polynomial, by Lemma \ref{translation and dynamical degree} we have $\delta_{T_{a_{i}}\circ f_{i}}=\delta_{f_{i}}=1$
and therefore $A(T_{a_{i}}\circ f_{i})=\{1\}=A(f_{i})$.
If $F_{i}(t), i\geq1$ is not a cyclotomic polynomial,  by Lemma \ref{minimal polynomial and conjugation},
$T_{a_{i}}\circ f_{i}$ is conjugate by a translation to a group homomorphism $h_{i}$ with minimal polynomial $F_{i}^{e_{i}}$.
In particular, $A(T_{a_{i}}\circ f_{i})=A(h_{i})=A(f_{i})$.
Therefore 
\begin{align*}
A\bigl((T_{a_{0}}\circ f_{0}) \times \cdots \times (T_{a_{r}}\circ f_{r})\bigr)&=A(T_{a_{0}}\circ f_{0})\cup \cdots \cup A(T_{a_{r}}\circ f_{r})\\
&=A(f_{0})\cup \cdots \cup A(f_{r})=A(f).
\end{align*}

If the $T_{a}\circ f$-orbit of a point $x\in X(\QQ)$ is Zariski dense, then by Lemma \ref{product case} and Lemma \ref{core case}, we have
\[
\alpha_{f}(x)=\max\{\delta_{h_{i}}=\delta_{f_{i}} \mid \text{$F_{i}$ is not a cyclotomic polynomial}\}=\delta_{f}.
\]
\end{proof}

\begin{proof}[Proof of Theorem \ref{main2 semiab}]
Since $F(1)\neq 1$, by Lemma \ref{minimal polynomial and conjugation}, there exists a point $b\in X(\QQ)$ such that
$T_{-b}\circ f \circ T_{b}$ is a group homomorphism.
Thus it is enough to prove the equivalence of (1), (2) and (3) for every group homomorphism $f$ and $b=0$.
$(3) \Rightarrow (2)$. This follows from the fact that the set of $n$-torsion points of $X$ is finite for each $n>0$
and that the image of an $n$-torsion point by a group homomorphism is also an $n$-torsion point.
$(2) \Rightarrow (1)$ is trivial.
To prove $(1) \Rightarrow (3)$, let $x\in X(\QQ)$ be a point with $ \alpha_{f}(x)=1$.
By \S3, we may assume that the minimal polynomial of $f$ is the form of $F(t)^{e}$ where
$F$ is an irreducible monic polynomial that is not cyclotomic.
We use the notation of Lemma \ref{core case}.
By Theorem \ref{main2} and the inequality $ \alpha_{f}(x)\geq \alpha_{g}(p(x))$, $p(x)$ is a torsion point.
Take $n>0$ so that $np(x)=0$.
Then $nx \in T$.
By Lemma \ref{multiple by n}, $ \alpha_{f_{T}}(nx)=\alpha_{f}(nx)= \alpha_{f}(x)=1$.
Since the minimal polynomial of $f_{T}$ divides $F(t)^{e}$, we can use \cite[Proposition 21(d)]{sil} and
have $nx\in T(\QQ)_{\rm tors}$.
Hence $x\in X(\QQ)_{\rm tors}$.
\end{proof}

\begin{ack}
Part of this paper was written during a research stay at Brown University, to which
the authors are grateful for their hospitality.
Professor Joseph H. Silverman kindly supported the authors' stay at Brown University.
The authors would like to thank Fei Hu for letting us know \cite[Remark 3.4]{dn}.
The authors also would like to thank Yitzchak E. Solomon for his comments.
The first author is supported by the Program for Leading Graduate Schools, MEXT, Japan.
The second author is supported by the Top Global University project for Kyoto University.
\end{ack}

\end{document}